\documentclass[11pt]{amsart}
\usepackage[T1]{fontenc}
\usepackage{lmodern,microtype,amsmath,amssymb,amsthm,mathrsfs}
\usepackage[margin=1in]{geometry}
\usepackage{tikz,needspace}
\usetikzlibrary{arrows.meta}
\usepackage[hidelinks]{hyperref}
\allowdisplaybreaks
\numberwithin{equation}{section}
\newtheorem{theorem}{Theorem}[section]
\newtheorem{proposition}[theorem]{Proposition}
\newtheorem{lemma}[theorem]{Lemma}
\newtheorem{corollary}[theorem]{Corollary}

\theoremstyle{remark}
\newtheorem{example}[theorem]{Example}
\newcommand{\C}{\mathbb C}
\newcommand{\Q}{\mathbb Q}
\newcommand{\Z}{\mathbb Z}
\newcommand{\K}{\mathbb K}
\newcommand{\ct}{\operatorname{ct}}
\newcommand{\diag}{\operatorname{diag}}
\newcommand{\Inv}{\operatorname{Inv}}
\newcommand{\Tr}{\operatorname{Tr}}
\newcommand{\End}{\operatorname{End}}
\newcommand{\GL}{\mathrm{GL}}
\newcommand{\gl}{\mathfrak{gl}}
\newcommand{\slie}{\mathfrak{sl}}
\newcommand{\Sym}{\operatorname{Sym}}
\newcommand{\Tw}{\operatorname{Tw}}
\newcommand{\dd}{\mathrm d}
\newcommand{\qb}[2]{\begin{bmatrix}#1\\#2\end{bmatrix}_{q}}
\newcommand{\Gb}[2]{\begin{bmatrix}#1\\#2\end{bmatrix}^{\mathrm{Gauss}}_{q^2}}
\newcommand{\QB}[2]{\begin{bmatrix}#1\\#2\end{bmatrix}^{\mathrm{Gauss}}_{Q}}
\newcommand{\cP}{\mathcal P}
\newcommand{\cV}{\mathcal V}
\newcommand{\cK}{\mathcal K}
\newcommand{\vac}{\varnothing}
\newcommand{\R}{\mathscr R}
\newcommand{\br}[1]{\{#1\}_q}
\newcommand{\eps}{\varepsilon}
\DeclareMathOperator{\Spec}{Spec}
\hypersetup{pdftitle={Rectangular cyclotomic expansions of colored HOMFLY-PT polynomials},pdfauthor={Honghuai Fang and Tian Zhou},pdfsubject={The KNTZ double-twist formula and the general-knot cyclotomic expansion},pdfkeywords={colored HOMFLY-PT polynomial, rectangular representation, cyclotomic expansion, KNTZ, integral interpolation, KZ equation}}
\title[Rectangular cyclotomic expansions]{Rectangular cyclotomic expansions\\of colored HOMFLY--PT polynomials}
\author{Honghuai Fang}
\address{Institute for Theoretical Sciences, Westlake University, No.~600 Dunyu Road, Xihu District, Hangzhou, Zhejiang 310030, China}
\email{fanghonghuai@westlake.edu.cn}
\author{Tian Zhou}
\address{Peking University, No.~5 Yiheyuan Road, Haidian District, Beijing 100871, China}
\email{2201110034@pku.edu.cn}
\subjclass[2020]{Primary 57K14; Secondary 57K16, 17B37, 05E05, 33D80}
\keywords{colored HOMFLY--PT polynomial, rectangular representation, cyclotomic expansion, double-twist knot, Laurent integrality, Knizhnik--Zamolodchikov equation}
\date{}
\begin{document}
\begin{abstract}
We prove the rectangular cyclotomic expansion conjecture of Kameyama, Nawata, Tao, and Zhang for reduced colored HOMFLY--PT polynomials of arbitrary zero-framed knots. We also prove their explicit double-twist formula for every rectangular color and all pairs of nonzero integer twist parameters. For each knot, we construct a double Newton expansion with coefficients in $\mathbb Z[A^{\pm1},q^{\pm1}]$. These coefficients are uniquely determined and independent of the width and height of the rectangular color. The construction uses finite-rank representation symmetries and integral interpolation. The double-twist formula follows from a simultaneous exterior-power comparison of the two four-point Casimir operators and the known one-row formula. Finite transition formulas express the double Newton coefficients of double-twist knots in terms of the prescribed twist coefficients.
\end{abstract}
\maketitle

\section{Introduction}\label{sec:introduction}
For a knot $K$, the reduced colored HOMFLY--PT polynomials
$H_\lambda(K;A,q)$ form a family indexed by partitions $\lambda$.
For an admissible color, setting $A=q^N$ gives the corresponding
type-$A$ Reshetikhin--Turaev invariant, while the Hecke-skein construction keeps
$A$ and $q$ independent \cite{RT,AM}. For example, at $A=q^2$ the
one-row colors $(r)$ give the colored Jones polynomials associated with
the $(r+1)$-dimensional representations. We study how the invariant
depends on a rectangular color and seek formulas that separate the
knot from the width and height of the rectangle.

A cyclotomic expansion expresses a colored invariant as a finite sum
of explicit color-dependent kernels multiplied by coefficients
attached to the knot. Its arithmetic content lies in the integrality
of these coefficients. Factoring the kernels then gives divisibility
relations between different colors, and the expansion can be
specialized term by term. Habiro's integral expansion of the colored
Jones polynomial provides the model for this approach and is a basic
ingredient in his unified Witten--Reshetikhin--Turaev invariant
\cite{Habiro}. In the HOMFLY--PT setting, one seeks such an expansion
over $\Z[A^{\pm1},q^{\pm1}]$, so that the dependence on the quantum
group rank is retained in the variable $A$.
The ordinary invariant of the figure-eight knot already illustrates
this factorization:
\[
 H_{(1)}(4_1;A,q)
 =1+(Aq-A^{-1}q^{-1})(Aq^{-1}-A^{-1}q).
\]
The two factors show directly that its value is one at $A=q$ and
$A=q^{-1}$. Cyclotomic expansions
organize analogous factors across the entire sequence of colors.

For symmetric, or one-row, colors, Nawata and Oblomkov formulated a
cyclotomic expansion conjecture \cite[Conjecture~2.3]{NO}.
Kononov and Morozov studied the corresponding differential expansion
and the additional factors of its coefficients
\cite[equation~(2)]{KM}. The relation between these factors,
finite-rank representation symmetries, and the Alexander polynomial
is developed by Lanina and Morozov \cite[Sections~1--2]{LM}.
Fang and Zhou give a Laurent-integral Newton formulation of the
one-row expansion \cite[Theorem~8.4]{FZ}.
At fixed higher rank, Beliakova and Gorsky construct cyclotomic
expansions of quantum $\gl_N$ link invariants from central elements
and interpolation Macdonald polynomials \cite{BG}. Their work
organizes the dependence on highest weights at a fixed $N$; a
two-variable expansion instead organizes the rank specializations
within one Laurent-polynomial identity.

Write $(r^s)$ for the partition with $s$ rows of length $r$.
At rank $N>s$, it corresponds to the highest weight $r\omega_s$,
where $\omega_s$ is the $s$th fundamental weight.
The cases $s=1$ and $r=1$ give symmetric and exterior colors,
respectively, and transposition exchanges the two dimensions of a
rectangle. Kameyama, Nawata, Tao, and Zhang (KNTZ) proposed cyclotomic
expansions for all such colors \cite{KNTZ}. Their general-knot
conjecture asks for a sum over partitions contained in $(r^s)$,
with Laurent-polynomial coefficients independent of the outer width
$r$. The kernels are principal Schur specializations multiplied by
products over the boxes of the partition. For the first square color
$(2^2)=(2,2)$, the index set is
\[
 \varnothing,\quad (1),\quad (2),\quad (1,1),\quad (2,1),\quad (2,2).
\]
Thus the rectangular formula includes both row and column shapes,
even for this small color. For double-twist knots, KNTZ prescribe the
coefficients explicitly in terms of a triangular matrix of Gaussian
minors. The two twist parameters enter through separate factors in
each summand.

These are two different problems. For an arbitrary knot, one must
prove that the whole rectangular color sequence admits the proposed
integral expansion. For a double-twist knot, one must identify the
invariant with a specified finite sum. The latter formula includes
the trefoil and figure-eight knot and provides explicit evaluations
for every rectangle in an infinite two-parameter knot family.
Chen, Liu, and Zhu proved the double-twist formula for one-row colors
by skein methods \cite[Theorem~1.2]{CLZ}. Quantum $6j$-symbols give
the one-row exclusive Racah matrices \cite{MMS}, and rectangular
Racah matrices and triangular evolution matrices are studied in
\cite{Morozov18,Morozov20}. A rectangular theorem must extend the
one-row formulas to arbitrary height while retaining the integral
coefficient ring and the prescribed twist dependence.

We prove both rectangular statements of Kameyama--Nawata--Tao--Zhang.
For every knot, we obtain a unique Laurent expansion in the width at
fixed height and, more generally, a unique double expansion whose
coefficients are independent of both dimensions. We also describe
all partition-indexed Laurent expansions with the prescribed KNTZ
kernels. For double-twist knots, we prove the specified formula for
all rectangular colors and all nonzero integral twist parameters.
A finite integral change of kernels relates its explicit twist
coefficients to the coefficients of the general expansion. This
comparison connects closed formulas for a knot family with the
integral structure of rectangular invariants for arbitrary knots.

\subsection{Expansions for arbitrary knots}

Throughout, $K$ is an oriented knot in $S^3$ with framing zero,
and $U$ denotes the unknot. The ordinary HOMFLY--PT polynomial $P_L$
is normalized by
\[
 A P_{L_+}-A^{-1}P_{L_-}=(q-q^{-1})P_{L_0},\qquad P_U=1.
\]
For a partition $\lambda$ of $n$, let $\ell(\lambda)$ denote its
number of nonzero parts, and let $\mathcal H_\lambda(K;A,q)$ be the unreduced colored HOMFLY--PT
invariant: in the Hecke-skein construction, one takes the $n$-parallel
cable of $K$, inserts a primitive Hecke idempotent of type $\lambda$,
and evaluates with framing zero \cite{AM}. We define
\[
 H_\lambda(K;A,q)
 =\frac{\mathcal H_\lambda(K;A,q)}
        {\mathcal H_\lambda(U;A,q)}.
\]
Thus $H_\lambda(U;A,q)=1$, and $H_{(1)}(K;A,q)=P_K(A,q)$.
For $\ell(\lambda)\leq N$, the specialization $A=q^N$ is the
reduced Reshetikhin--Turaev invariant colored by the irreducible
polynomial $U_q(\gl_N)$-module of highest weight $\lambda$ \cite{RT}.
The empty color has value $H_\varnothing=1$.
For $r,s\geq0$, write
\[
 H_{r,s}(K;A,q)=H_{(r^s)}(K;A,q),\qquad
 H_{r,0}=H_{0,s}=1.
\]
We suppress $K,A,q$ when they are fixed.

Our coefficient ring and field are
\[
 \R=\Z[A^{\pm1},q^{\pm1}],\qquad \Q(A,q),
\]
where $A$ and $q$ are independent indeterminates. Put
\[
 \{x\}=x-x^{-1},\qquad
 \br{a}=\{q^a\},\qquad
 [a]_q=\frac{\br{a}}{\br{1}},\qquad
 D_a=\{Aq^a\}.
\]
For $a\geq0$, set $[a]_q!=\prod_{j=1}^a[j]_q$ and
$\br{a}!=\prod_{j=1}^a\br{j}$. For $0\leq b\leq a$, the symmetric
quantum binomial and the Gaussian binomial with parameter $Q$ are
\[
 \qb ab=\frac{[a]_q!}{[b]_q![a-b]_q!},\qquad
 \QB ab=\prod_{j=1}^b\frac{1-Q^{a-b+j}}{1-Q^j}.
\]
They are related by
\begin{equation}\label{eq:binomial}
 \Gb ab=q^{b(a-b)}\qb ab.
\end{equation}
For example, $\qb21=q+q^{-1}$, whereas $\Gb21=1+q^2$.
Both binomials are zero unless $0\leq b\leq a$.
Empty products have value one. Specializations are taken in the
polynomial or Laurent-polynomial forms of these expressions.

For $r,k\geq0$, put
\begin{equation}\label{eq:row-kernel}
 Z^+_{r,k}=\qb rk\prod_{i=0}^{k-1}D_{r+i},\qquad
 Z^-_{r,k}=\qb rk\prod_{i=0}^{k-1}D_{-r-i}.
\end{equation}
The kernels are zero for $k>r$ and equal one for $k=0$.

The first theorem gives a Laurent-integral Newton expansion in the width at each fixed height.

\begin{theorem}\label{thm:row-main}
For every zero-framed knot $K$ and every $s\geq1$, there is a unique sequence $C_{k,s}(K;A,q)\in \R$, $k\geq0$, such that
\begin{equation}\label{eq:row-expansion}
 H_{r,s}=\sum_{k=0}^r Z^+_{r,k}C_{k,s}\qquad(r\geq0).
\end{equation}
It satisfies $C_{0,s}=1$ and
\begin{equation}\label{eq:row-parity-factor}
 C_{k,s}(-A,q)=(-1)^kC_{k,s}(A,q),\qquad
 C_{k,s}\in D_{-s}\R\quad(k\geq1).
\end{equation}
\end{theorem}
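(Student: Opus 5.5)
Uniqueness and existence over $\Q(A,q)$ come first, since they are formal. The matrix $(Z^+_{r,k})_{0\le k\le r}$ is lower triangular with diagonal entries $Z^+_{r,r}=\prod_{i=0}^{r-1}D_{r+i}\neq0$. Hence \eqref{eq:row-expansion} can be solved recursively in $r$. This gives unique $C_{k,s}\in\Q(A,q)$, namely
\[
 C_{k,s}=\frac{H_{k,s}-\sum_{j<k}Z^+_{k,j}C_{j,s}}{\prod_{i=0}^{k-1}D_{k+i}},
\]
and $C_{0,s}=H_{0,s}=1$. The parity statement follows from this recursion by induction. It needs two inputs: the standard symmetry $H_\lambda(-A,q)=H_\lambda(A,q)$ for zero-framed knots, which holds because the HOMFLY--PT skein relation and the idempotent closures are even in $A$ after reduction; and $D_a(-A)=-D_a(A)$, so that $Z^+_{r,k}(-A)=(-1)^kZ^+_{r,k}(A)$. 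The whole content of the theorem is therefore the claim that each $C_{k,s}$ is a Laurent polynomial lying in $D_{-s}\R$.

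For integrality I would use the specializations $A=q^N$ and a Newton interpolation argument.

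\textbf{Step 1 (vanishing nodes).} At $A=q^{-N}$ we have $D_{r+i}=\{q^{r+i-N}\}$, which vanishes when $r+i=N$. So $Z^+_{r,k}(q^{-N},q)=0$ whenever $r\le N\le r+k-1$. The denominators $D_{k+i}$, for $0\le i\le k-1$, correspond to the points $A=\pm q^{-k-i}$. Showing that the recursion quotient has no pole at these points amounts to Newton divided-difference identities among the values $H_{r,s}(q^{-N},q)$ for $r$ near $N$.

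\textbf{Step 2 (the symmetry input).} These identities should come from finite-rank representation symmetries. The transposition/rank-level symmetry $H_\lambda(A,q)=H_{\lambda^T}(A,q^{-1})$ holds up to sign conventions. In addition, at $A=q^{N}$ with $\gl_N$, the rectangle $(r^s)$ is dual to $(r^{N-s})$, and the highest weights $r\omega_s$ satisfy a reflection relation in $r$ around the wall where $D_{-r-i}$ vanishes. Concretely, I would show that for fixed $N$ the function $r\mapsto H_{r,s}(q^{\pm N},q)$ satisfies $H_{r,s}=H_{N'-r,s}$-type identities at the relevant evaluation points. That makes the Newton kernel expansion at those nodes finite and integral, as in the one-row case \cite[Theorem~8.4]{FZ}.

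\textbf{Step 3 (integrality of the quotient).} Next I would show that $C_{k,s}\in\R$. By induction, $C_{j,s}\in\R$ for $j<k$. The numerator lies in $\R$, and it vanishes at every zero of $\prod_i D_{k+i}$. The zeros are simple and distinct, since the $D_{k+i}$ are pairwise coprime irreducibles in $\R$ up to units. Divisibility in the UFD $\R$ then gives $C_{k,s}\in\R$. To verify the vanishing at $A=\pm q^{-k-i}$, I would specialize to rank $N=k+i$ (using evenness for the sign) and apply the Step 2 symmetry to the numerator.

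\textbf{Step 4 (the factor $D_{-s}$).} At $A=q^{s}$, i.e.\ $\gl_s$, the rectangle $(r^s)$ is a power of the determinant. It is therefore one-dimensional, and the reduced invariant of a zero-framed knot equals $1$. So $H_{r,s}(q^s,q)=1$ for all $r$, and the unique solution of the triangular system at this specialization is $C_{0,s}=1$ and $C_{k,s}=0$ for $k\ge1$. The kernels specialize to nonsingular diagonal entries, since $D_{k+i}(q^s)=\{q^{s+k+i}\}\ne0$. Hence $D_{-s}=\{Aq^{-s}\}$ divides $C_{k,s}$. Evenness handles $A=-q^s$.

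The main obstacle is Step 2/3, namely producing exactly the vanishing of the numerator at $A=\pm q^{-k-i}$. For non-admissible ranks $N<s$, the specialization $A=q^{-N}$ corresponds to $\ell(\lambda)>N$ and is not a genuine representation value. There I would first try the transposed description, using $H_{(r^s)}(A,q)=H_{(s^r)}(A,q^{-1})$ and rank-level duality, to turn the needed evaluation into an admissible one where the symmetry applies.
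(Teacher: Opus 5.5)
The overall structure of your proof matches the paper's: triangular uniqueness and $A$-parity, exact division in the UFD $\R$ once the numerator is shown to vanish at $A=\pm q^{-M}$ for $k\le M\le 2k-1$, and the factor $D_{-s}$ from $H_{r,s}(q^s,q)=1$. Your Step~4 is exactly the paper's argument. There is a genuine gap, however: the decisive step, which you yourself flag as the obstacle, is not carried out, and two ingredients are missing or misplaced.

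\textbf{The collision of kernels.} You only record that the kernels vanish at the nodes. What makes the argument work is a collision (Lemma~\ref{lem:collision}). Take $M=k+t$ with $0\le t<k$. At $A=q^{-M}$ one has $D_{k+i}=\br{i-t}$, so $Z^+_{k,j}=0$ for $t<j\le k$. For $j\le t$ the specialized kernel is $(-1)^j\br{k}!\br{t}!/(\br{j}!\br{k-j}!\br{t-j}!)$, which is symmetric in $k$ and $t$. Hence $Z^+_{k,j}(q^{-M},q)=Z^+_{t,j}(q^{-M},q)$. Combined with the already-established width-$t$ expansion, the numerator at $A=q^{-M}$ collapses to $H_{k,s}(q^{-M},q)-H_{t,s}(q^{-M},q)$. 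So no divided-difference identities are needed. What must be shown is precisely the width reflection $H_{k,s}(q^{-M},q)=H_{M-k,s}(q^{-M},q)$.

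\textbf{The source of the width reflection.} Your account of where this reflection comes from does not work as stated. At $A=q^N$ the duality $r\omega_s\leftrightarrow r\omega_{N-s}$ (the diagram involution) reflects the \emph{height}, not the width; there is no reflection in $r$ at $A=q^N$. Moreover, $A=q^{-M}$ is not a representation value of $(r^s)$ for $M\ge s$ or for any other $M$. It becomes one only after transposition, and this is needed at every node, via $H_{r,s}(q^{-M},q)=H_{s,r}(\bar q^{M},\bar q)$ with $\bar q=q^{-1}$. The right side is the rank-$M$ invariant of $(s^r)$. It is admissible because $r=k\le M$, so your concern about $N<s$ never arises. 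Apply the height duality $(s^r)\leftrightarrow(s^{M-r})$ there and transpose back; this yields Proposition~\ref{prop:reflections}. The case $r=M$ is a determinant power with reduced value $1$, and rank one is checked directly.

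With the collision and this reflection, your Step~3 goes through. Note that $D_{k+i}$ is not irreducible: it is a unit times the two non-associate primes $A\mp q^{-(k+i)}$. You therefore genuinely need both roots, which evenness provides.
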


Triangularity determines the coefficients over $\Q(A,q)$.
The assertion that they lie in $\R$ gives more: after the lower-width
terms have been subtracted, the remainder is divisible by every
factor of $Z^+_{k,k}$. The factor $D_{-s}$ in
\eqref{eq:row-parity-factor} records the specialization at which
$(r^s)$ becomes a determinant power.

To state the partition-indexed expansion, we use the following
notation. For a partition $\lambda$, write $|\lambda|=\sum_i\lambda_i$ and let $\lambda^T$ be its transpose. For a box $\square=(i,j)$, put
\[
 c(\square)=j-i,\qquad
 h(\square)=\lambda_i-j+\lambda_j^T-i+1,\qquad
 \ct(\lambda)=\sum_{\square\in\lambda}c(\square).
\]
For any partition $\lambda$, define
\begin{equation}\label{eq:small-dimension}
 D_\lambda^{(s)}(q)
 =s_\lambda(q^{s-1},q^{s-3},\ldots,q^{1-s})
 =\prod_{\square\in\lambda}
 \frac{[s+c(\square)]_q}{[h(\square)]_q}
 \quad (\ell(\lambda)\leq s),
\end{equation}
and set $D_\lambda^{(s)}=0$ for $\ell(\lambda)>s$.
Here $s_\lambda$ is the Schur polynomial. In particular,
$D_\lambda^{(s)}(q^{-1})=D_\lambda^{(s)}(q)$.

For a partition $\lambda$, the shape kernel is
\begin{equation}\label{eq:shape-kernel}
 \Phi_{\lambda,r}(A,q)=D_{\lambda^T}^{(r)}(q^{-1})
             \prod_{\square\in\lambda}D_{r+c(\square)}.
\end{equation}
It vanishes for $\lambda_1>r$, and
$\Phi_{(k),r}=Z^+_{r,k}$.

Since a row gives exactly $Z^+_{r,k}$,
Theorem~\ref{thm:row-main} supplies the coefficients required by
\cite[Conjecture~2.1]{KNTZ}.

\begin{corollary}\label{cor:kntz}
For every zero-framed knot $K$, set $\mathcal C_{\varnothing,s}=1$ and
\begin{equation}\label{eq:row-lift}
 \mathcal C_{\lambda,s}=
 \begin{cases}
 C_{k,s},&\lambda=(k),\ k\geq1,\\
 0,&\ell(\lambda)\geq2.
 \end{cases}
\end{equation}
Then $\mathcal C_{\lambda,s}\in \R$, it is independent of $r$, and it vanishes when $s<\ell(\lambda)$. For all $r,s\geq1$,
\begin{equation}\label{eq:KNTZ-general}
 H_{r,s}=\sum_{\lambda\subseteq(r^s)}
 \mathcal C_{\lambda,s}\Phi_{\lambda,r}.
\end{equation}
\end{corollary}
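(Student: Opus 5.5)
The corollary is a direct repackaging of Theorem~\ref{thm:row-main}, and I would prove it by checking its four assertions in turn.

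First, membership in $\R$ and independence of $r$ are immediate from the definition \eqref{eq:row-lift}. The coefficient $\mathcal C_{\varnothing,s}=1$ is in $\R$. For $\lambda=(k)$ with $k\geq1$, we have $\mathcal C_{(k),s}=C_{k,s}$, which lies in $\R$ by Theorem~\ref{thm:row-main}. The sequence $C_{k,s}$ is defined there independently of $r$, since it is one sequence serving all widths simultaneously. Every other $\lambda$ gets the coefficient $0$.

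Second, the vanishing claim for $s<\ell(\lambda)$ needs care only in small cases. If $\ell(\lambda)\geq2$, the coefficient is $0$ by definition. If $\ell(\lambda)\le1$, then $s<\ell(\lambda)$ would force $s<1$, which is excluded since $s\geq1$. The same remark covers the empty partition. So the claim holds for all $\lambda$.

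Third, I would verify \eqref{eq:KNTZ-general}. Fix $r,s\geq1$. The sum over $\lambda\subseteq(r^s)$ collapses to the terms with $\ell(\lambda)\le1$, because every other term has zero coefficient. These are $\lambda=\varnothing$ and $\lambda=(k)$ with $1\le k\le r$; all of them are contained in $(r^s)$ because $s\geq1$. The empty shape gives $\Phi_{\varnothing,r}=1$, since both products in \eqref{eq:shape-kernel} are empty and $D^{(r)}_\varnothing=1$; this matches $Z^+_{r,0}=1$.

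For the row shapes, I would confirm the stated identity $\Phi_{(k),r}=Z^+_{r,k}$. The contents of the boxes of $(k)$ are $0,\dots,k-1$, so the $D$-product in \eqref{eq:shape-kernel} equals $\prod_{i=0}^{k-1}D_{r+i}$. It remains to show $D^{(r)}_{(1^k)}(q^{-1})=\qb rk$. This holds because $s_{(1^k)}$ is the elementary symmetric function $e_k$, and its principal specialization at $q^{r-1},q^{r-3},\dots,q^{1-r}$ is the symmetric binomial $\qb rk$. Equivalently, in the hook-content formula the contents of $(1^k)$ are $0,-1,\dots,1-k$ and the hooks are $k,k-1,\dots,1$, so the product is $[r]_q[r-1]_q\cdots[r-k+1]_q/[k]_q!$. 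By the symmetry $D^{(r)}(q^{-1})=D^{(r)}(q)$, the inversion $q\mapsto q^{-1}$ does not affect the result. Substituting, the collapsed sum is $\sum_{k=0}^r Z^+_{r,k}C_{k,s}$, which equals $H_{r,s}$ by \eqref{eq:row-expansion}.

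None of these steps is a real obstacle. The only points needing attention are the evaluation of the column Schur specialization just described and the bookkeeping that every row shape $(k)$ with $k\le r$ lies inside $(r^s)$ once $s\geq1$.
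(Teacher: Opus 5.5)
Your proposal is correct and follows essentially the same route as the paper. Both arguments use the hook-content evaluation $D^{(r)}_{(1^k)}(q^{-1})=\qb rk$ and the contents $0,\dots,k-1$ of $(k)$ to get $\Phi_{(k),r}=Z^+_{r,k}$, so the row-supported sum is exactly \eqref{eq:row-expansion}, with the remaining assertions read off from \eqref{eq:row-lift} and Theorem~\ref{thm:row-main}; your extra bookkeeping (the empty shape, containment of the rows in $(r^s)$, and the height condition being automatic for $s\geq1$) spells out what the paper calls immediate.
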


The monomial prefactor allowed in \cite[equation~(2.6)]{KNTZ} is one
in this normalization.

\Needspace{14\baselineskip}
A second interpolation in the height produces coefficients independent
of both dimensions. The two kernels in \eqref{eq:row-kernel} are chosen
so that transposition exchanges their roles.

\begin{theorem}\label{thm:double-main}
For every zero-framed knot $K$, there is a unique family
$(E_{k,l})_{k,l\geq0}$ of elements of $\R$ such that
\begin{equation}\label{eq:double-expansion}
 H_{r,s}=\sum_{k=0}^r\sum_{l=0}^s
 Z^+_{r,k}Z^-_{s,l}E_{k,l}\qquad(r,s\geq0).
\end{equation}
Its boundary values are $E_{0,0}=1$ and $E_{k,0}=E_{0,k}=0$ for $k>0$. Moreover,
\begin{align}
 E_{k,l}(A,q)&=E_{l,k}(A,q^{-1}),\label{eq:double-transpose}\\
 E_{k,l}(-A,q)&=(-1)^{k+l}E_{k,l}(A,q),\label{eq:double-A-parity}\\
 E_{k,l}(A,-q)&=(-1)^{k(k+1)/2+l(l+1)/2}E_{k,l}(A,q).
 \label{eq:double-q-parity}
\end{align}
\end{theorem}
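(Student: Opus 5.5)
Theorem~\ref{thm:row-main} is the starting point. For each fixed $s\geq1$ it gives coefficients $C_{k,s}\in\R$ with $H_{r,s}=\sum_k Z^+_{r,k}C_{k,s}$. We also set $C_{0,0}=1$ and $C_{k,0}=0$ for $k>0$, which is consistent with $H_{r,0}=1$. The plan is to interpolate $C_{k,s}$ in $s$ for each fixed $k$ with the kernel $Z^-_{s,l}$:
\[
C_{k,s}=\sum_{l=0}^s Z^-_{s,l}E_{k,l}.
\]
This system is triangular, since $Z^-_{s,s}=\br{s}!/\br{1}^s\prod_{i<s}D_{-s-i}\neq0$. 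So over $\Q(A,q)$ the $E_{k,l}$ exist and are unique. This handles uniqueness in \eqref{eq:double-expansion} as well. Uniqueness of the $C_{k,s}$ follows by triangularity in $r$, and then uniqueness of the $E_{k,l}$ follows by triangularity in $s$.

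The boundary values come out directly. $C_{0,s}=1$ forces $E_{0,0}=1$ and $E_{0,l}=0$ for $l>0$. Likewise $C_{k,0}=0$ forces $E_{k,0}=0$ for $k>0$.

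The main obstacle is integrality, meaning $E_{k,l}\in\R$. The Newton remainder in $s$ must be divisible by all factors of $Z^-_{l,l}$, that is, by $[l]_q!$ and by $D_{-l-i}$. I would use transposition symmetry. The identity $H_{(r^s)}(A,q)=H_{(s^r)}(A,q^{-1})$, up to the standard sign conventions ($q\mapsto -q^{-1}$), exchanges $Z^+$ and $Z^-$ roles: $Z^+_{r,k}(A,q^{-1})$ relates to $Z^-$-type factors, because $D_{a}(A,q^{-1})=D_{-a}(A,q)$.

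From this, the double system computed by first interpolating in $s$ and then in $r$ is the transposed system. Its outputs are integral by Theorem~\ref{thm:row-main} applied with roles swapped, i.e.\ to fixed width. The two computations yield the same $E$, again by uniqueness. The difficulty is that each order proves integrality only for the first interpolation step. So I would instead establish divisibility directly.

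For integrality I would show that the finite-difference operator $\Delta^-_l$ extracting $E_{k,l}$ from $(C_{k,s})_s$ has integral output. It suffices to check vanishing of the appropriate remainder at the specializations $Aq^{-l-i}=\pm1$ and at roots of unity dividing $[l]_q!$. At $A=\pm q^{l+i}$, i.e.\ $N=l+i$, the rectangular colors $(r^s)$ with $s>N$ vanish. Representation symmetries (the determinant relation $(r^N)\sim$ trivial, and $(r^s)\leftrightarrow(r^{N-s})$ duality) give the needed relations among $H_{r,s}$ at these values. Those relations pass to the $C_{k,s}$ by the uniqueness of \eqref{eq:row-expansion} at the specialized level. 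The $[l]_q!$ divisibility I expect to handle by the integral interpolation lemma already used for Theorem~\ref{thm:row-main}. That lemma says Newton coefficients of a sequence of the form $\sum \qb s j(\cdots)$ with integral data are integral. I would apply it in the variable $s$ after writing $C_{k,s}$ via the transposed expansion.

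The symmetries follow from uniqueness. Transposing $(r^s)$ to $(s^r)$ and using $Z^+_{r,k}(A,q^{-1})=Z^-_{r,k}(A,q)$ (quantum binomials are $q\mapsto q^{-1}$ invariant) gives \eqref{eq:double-transpose}. The $A$-parity \eqref{eq:double-A-parity} follows from $H_{r,s}(-A,q)=H_{r,s}(A,q)$ for a zero-framed knot, together with each $D$ being odd in $A$, which is compatible with \eqref{eq:row-parity-factor}. For the $q$-parity \eqref{eq:double-q-parity}, note that under $q\mapsto-q$ the quantity $D_a$ picks up $(-1)^a$ and $\qb rk$ picks up $(-1)^{k(r-k)}$. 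Combined with the known behavior of $H_{r,s}$ under $q\mapsto -q$, the $r$-dependence cancels, leaving the sign $(-1)^{k(k+1)/2}$, and symmetrically $(-1)^{l(l+1)/2}$.
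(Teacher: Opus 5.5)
\textbf{There is a genuine gap: integrality of $E_{k,l}$, which is the main content of the theorem, is not established.} Your uniqueness argument, the boundary values, and the three symmetries are fine and agree with the paper. In this normalization \eqref{eq:transpose} holds with no sign. The case $q\mapsto -q$ follows from evenness of $H_{r,s}$ together with $Z^\pm_{r,k}(A,-q)=(-1)^{k(k+1)/2}Z^\pm_{r,k}(A,q)$.

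First, your diagonal kernel is miscomputed. Since $\qb{l}{l}=1$, we have $Z^-_{l,l}=\prod_{b=l}^{2l-1}D_{-b}$, and there is no factor $[l]_q!$. The roots-of-unity divisibility you plan for is therefore unnecessary. The tool you cite for it (integrality of Newton coefficients of sequences $\sum\qb{s}{j}(\cdots)$) is also not the interpolation result available here: Theorem~\ref{thm:abstract} needs a parity hypothesis and a reflection hypothesis, and its whole content is the $D$-divisibility.

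Second, for the $D_{-b}$ factors you name ingredients but never produce the vanishing. Also, reduced invariants of colors with $s>N$ do not vanish at $A=q^N$ (for the unknot they equal one), and such colors are not needed. The missing argument runs as follows. Fix $0\le t<l$ and $M=l+t$.
\begin{itemize}
\item[(a)] The reflection $H_{r,s}(q^M,q)=H_{r,M-s}(q^M,q)$ passes to $C_{k,s}(q^M,q)=C_{k,M-s}(q^M,q)$ by triangular uniqueness over $\Q(q)$. This works because $Z^+_{k,k}(q^M,q)=\prod_{a=k}^{2k-1}\br{M+a}\neq0$ (this is Proposition~\ref{prop:coefficient-reflection}).
\item[(b)] By the $q\mapsto q^{-1}$ form of Lemma~\ref{lem:collision}, at $A=q^M$ one has $Z^-_{l,j}=Z^-_{t,j}$ for $j\le t$ and $Z^-_{l,j}=0$ for $t<j\le l$.
\end{itemize}
Hence the remainder $C_{k,l}-\sum_{j<l}Z^-_{l,j}E_{k,j}$ specializes to $C_{k,l}(q^M,q)-C_{k,t}(q^M,q)=0$. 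Its $A$-parity $(-1)^k$ gives the root $-q^M$ as well. Unique factorization over $M=l,\ldots,2l-1$ then makes the division by $Z^-_{l,l}$ exact. The paper packages this as Theorem~\ref{thm:abstract} applied to $s\mapsto C_{k,s}(A,q^{-1})$.

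The route you abandoned actually closes the gap by itself, through a different mechanism.
\begin{itemize}
\item Expanding first in $r$ by Theorem~\ref{thm:row-main} and then in $s$ divides integral data only by the $Z^-_{l,l}$. So $E_{k,l}\in\R[D_{-b}^{-1}:b\geq1]$.
\item By \eqref{eq:transpose} and Theorem~\ref{thm:row-main} at height $r$, $H_{r,s}=\sum_{l}Z^-_{s,l}\,C_{l,r}(A,q^{-1})$ with integral coefficients. Expanding these in $r$ divides only by the $Z^+_{k,k}$. By double uniqueness this yields the same array $E$, so $E_{k,l}\in\R[D_a^{-1}:a\geq1]$.
\item Up to units, $D_{-b}=(A-q^b)(A+q^b)$ and $D_a=(A-q^{-a})(A+q^{-a})$. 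For $a,b\geq1$ these primes are pairwise nonassociate, so in the UFD $\R$ the two localizations intersect in $\R$.
\end{itemize}
Your remark that each order gives integrality only for its first step overlooks this coprimality. This argument uses only the width theorem and transposition. The paper's route instead reuses the one-variable interpolation theorem and produces the coefficient reflection as a by-product.
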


The array $(E_{k,l})_{k,l\geq0}$ determines all rectangular colored
invariants of $K$ through \eqref{eq:double-expansion}. The values with
$r\leq k$ and $s\leq l$ determine $E_{k,l}$ by the finite recursion
\eqref{eq:double-recursion}.
At the first nontrivial color, the formula reads
\[
 H_{1,1}=1+D_1D_{-1}E_{1,1}.
\]
Higher $E_{k,l}$ describe the part of a rectangular invariant that
remains after contributions from smaller widths and heights have
been removed.

\subsection{The double-twist formula}
Let $K_{m,n}$ be the double-twist knot in the convention of
\cite[Figure~1]{KNTZ}: $K_{1,1}$ is the left-handed trefoil and
$K_{-1,1}$ is the figure-eight knot, as in \cite[Section~3]{CLZ}.
Figure~\ref{fig:plat} specifies the alternating plat used in the proof.

In this family, the knot-dependent coefficients can be given
explicitly. The color factors are the two-sided kernels
\begin{equation}\label{eq:two-sided-shape}
 \begin{split}
 \Psi_{\lambda;r,s}(A,q)
 &=\Phi_{\lambda,r}(A,q)\Phi_{\lambda^T,s}(A,q^{-1})\\
 &=D_{\lambda^T}^{(r)}(q^{-1})D_\lambda^{(s)}(q)
   \prod_{\square\in\lambda}D_{r+c(\square)}D_{c(\square)-s}.
 \end{split}
\end{equation}

Fix $r,s\geq1$ and let $\cP_{r,s}$ denote the partitions contained
in $(r^s)$, padded by zeroes to length $s$. For a box
$\square=(i,j)\in\lambda$, its arm length and leg length are
\[
 a_\lambda(\square)=\lambda_i-j,\qquad
 l_\lambda(\square)=\lambda_j^T-i.
\]
They count the boxes strictly to the right of $\square$ in its row
and strictly below it in its column, respectively; in particular,
$h(\square)=a_\lambda(\square)+l_\lambda(\square)+1$.
Put
\begin{equation}\label{eq:theta-shape}
 \Theta_\lambda=A^{2|\lambda|}q^{4\ct(\lambda)},\qquad
 t_\lambda=(-A^2)^{|\lambda|}q^{2\ct(\lambda)}.
\end{equation}
The KNTZ matrix on $\cP_{r,s}$ is
\begin{equation}\label{eq:B}
 B^{(s)}_{\lambda\mu}
 =q^{(1-s)(|\lambda|-|\mu|)}\Theta_\mu
 \det\left(\Gb{\lambda_i+s-i}{\mu_j+s-j}\right)_{i,j=1}^s.
\end{equation}
It vanishes unless $\mu\subseteq\lambda$, and its diagonal is $\Theta_\lambda$. Thus it is invertible over $\Q(A,q)$. Proposition~\ref{prop:compound} gives
\begin{equation}\label{eq:B-first}
 B^{(s)}_{\lambda\vac}=D_\lambda^{(s)}(q).
\end{equation}
For $m>0$, the twist factors of \cite[equations (2.2)--(2.4)]{KNTZ} are
\begin{equation}\label{eq:kntz-twists}
 F_\lambda^{(m;s)}=\frac{((B^{(s)})^m)_{\lambda\vac}}{D_\lambda^{(s)}},\qquad
 \Tw_\lambda^{(m)}=t_\lambda F_\lambda^{(m;s)},\qquad
 \Tw_\lambda^{(-m)}=F_\lambda^{(m;s)}(A^{-1},q^{-1}).
\end{equation}
The total arm and leg lengths satisfy
\[
 \sum_{\square\in\lambda}a_\lambda(\square)
 =\sum_i\binom{\lambda_i}{2},\qquad
 \sum_{\square\in\lambda}l_\lambda(\square)
 =\sum_j\binom{\lambda_j^T}{2}.
\]
Their difference is $\ct(\lambda)$, so the arm--leg exponent in
\cite[equation~(2.4)]{KNTZ} gives the factor
$q^{2\ct(\lambda)}$ in $t_\lambda$.
Proposition~\ref{prop:height} shows that the normalized first-column ratio is independent of $s\geq\max\{1,\ell(\lambda)\}$ and of the containing width. In particular,
\[
 \Tw_\lambda^{(1)}=t_\lambda,\qquad \Tw_\lambda^{(-1)}=1.
\]

Set
\begin{equation}\label{eq:twist-product}
 G_\lambda^{m,n}(A,q)=
 \frac{\Tw_\lambda^{(m)}\Tw_\lambda^{(n)}}
 {\Tw_\lambda^{(1)}\Tw_\lambda^{(-1)}}
 \qquad(m,n\neq0).
\end{equation}
Thus $G_\lambda^{m,n}$ is a rational function attached to the partition
and the two twist parameters, independent of the containing rectangle,
with $G_{\varnothing}^{m,n}=1$. The double-twist formula of
\cite[equation~(2.5)]{KNTZ} is as follows.

\begin{theorem}\label{thm:double-twist}
For every $r,s\geq1$ and every $m,n\in\Z\setminus\{0\}$,
\begin{equation}\label{eq:main}
\begin{split}
 H_{(r^s)}(K_{m,n};A,q)=
 \sum_{\lambda\subseteq(r^s)}
 &\frac{\Tw_\lambda^{(m)}\Tw_\lambda^{(n)}}
 {\Tw_\lambda^{(1)}\Tw_\lambda^{(-1)}}
 D_\lambda^{(s)}(q)D_{\lambda^T}^{(r)}(q^{-1})\\
 &\qquad\cdot\prod_{\square\in\lambda}
 D_{r+c(\square)}D_{c(\square)-s}.
\end{split}
\end{equation}
This is an identity of rational functions in independent variables $A,q$.
\end{theorem}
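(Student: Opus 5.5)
The plan follows the abstract: compare the two four-point Casimir (twist) operators simultaneously on exterior powers, then reduce to the known one-row formula of Chen--Liu--Zhu \cite[Theorem~1.2]{CLZ}.

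\emph{Step 1: plat model.} Using the alternating plat of Figure~\ref{fig:plat}, write $K_{m,n}$ as a closure of a four-strand tangle. The strands are colored by $V=(r^s)$ and its dual, with $2m$ half-twists in one channel and $2n$ half-twists in the other. At rank $N$ (i.e.\ $A=q^N$, $N$ large), the reduced invariant becomes
\[
 H_{(r^s)}(K_{m,n})=\sum_{X,Y}\frac{\dim_q X\,\dim_q Y}{\dim_q V\cdot\dim_q V}\;
 \theta_X^{m}\,\theta_Y^{n}\,S_{XY},
\]
where $X$ runs over the summands of $V\otimes V^*$ (the antiparallel channel) and $Y$ over the summands of $V\otimes V^*$ in the other channel. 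Here $\theta$ denotes the Casimir eigenvalues and $S$ is the exclusive Racah matrix. For rectangular $V$, both decompositions are multiplicity free and indexed by $\mu\in\cP_{r,s}$; the composite representations are $(\mu,\mu)$.

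\emph{Step 2: rewrite the twist factors.} The key structural input is to rewrite $\theta^m$ weighted by the first column of the Racah matrix as $(B^{(s)})^m_{\lambda\vac}$. Concretely, I would show the matrix identity $S\,T\,S^{-1}=P\,B^{(s)}P^{-1}$ after a diagonal rescaling $P$, with $T$ the diagonal Casimir matrix. Then
\[
 H=\sum_\lambda \Psi_{\lambda;r,s}\,\frac{\Tw^{(m)}_\lambda\Tw^{(n)}_\lambda}{\Tw^{(1)}_\lambda\Tw^{(-1)}_\lambda}
\]
follows from expanding $H$ in the eigenbasis of the triangular matrix $B^{(s)}$. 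The proof of this step has two parts.

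First, derive the identity for $s=1$ from the CLZ one-row formula and the MMS $6j$ description. Second, lift it to height $s$ by an exterior-power comparison. The rectangle $(r^s)$ sits inside $\Lambda^s$ of an $r$-row setup, so the Casimir operators on the four-point space of $(r^s)$ embed into the $s$-th compound (exterior power) of the one-row operators. That is why $B^{(s)}$ is the $s\times s$ minor matrix of Gaussian binomials, which is Proposition~\ref{prop:compound}. Doing this for both channels at once gives the height-$s$ identity, together with $B_{\lambda\vac}=D^{(s)}_\lambda$. The negative-twist case follows from the symmetry $A\to A^{-1}$, $q\to q^{-1}$ (mirror image), as encoded in $\Tw^{(-m)}$.

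\emph{Step 3: remove the rank specialization.} Both sides are rational in $A,q$. By Proposition~\ref{prop:height} the twist factors are independent of the containing rectangle, and $\Psi_{\lambda;r,s}$ is explicit. An identity that holds at $A=q^N$ for all $N>s+r$ therefore holds identically. As a check, the normalization at $m=1$, $n=-1$ gives $G_\lambda=1$, and the formula should reduce to the unknot case ($K_{1,-1}$ is unknotted); this fixes the monomial prefactors.

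\textbf{Main obstacle.} The hardest point is the simultaneous exterior-power comparison in Step 2. The compound of the one-row Casimir must match the rectangular Casimir, and the compound of the one-row Racah matrix must match the rectangular Racah matrix, \emph{in the same basis}. I would first establish it for the antiparallel twist via an explicit triangular evolution basis \cite{Morozov18,Morozov20}. I would then transport it to the other channel using the eigenvalue/duality symmetry (transposition $q\to q^{-1}$).
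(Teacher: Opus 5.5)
There is a genuine gap. Your outline names the right ingredients (compound matrices, the Chen--Liu--Zhu one-row formula, stable rank), but the two steps that carry the proof are either false or missing.

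\textbf{The key identity in Step 2 is false.} The identity $STS^{-1}=PB^{(s)}P^{-1}$ with $P$ diagonal cannot hold. Since $B^{(s)}$ is lower triangular with $\Theta_\vac=1$, the $\vac$-row of $PB^{(s)}P^{-1}$ is $e_\vac^T$. The identity would then give $e_\vac^TST=e_\vac^TS$, that is, $S_{\vac X}(T_X-1)=0$ for every summand $X$. Because $1$ is a simple eigenvalue, all but one entry of the $\vac$-row of $S$ would vanish. But every $S_{\vac X}=\sqrt{\dim_qX}/\dim_qV$ is nonzero.

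The relation between the two sides is bilinear, and the conjugating matrix is triangular, not diagonal. In the paper, Cauchy--Binet (Theorem~\ref{thm:shape-determinant}) writes the KNTZ sum as $\eta_s^{m+n}\det(((b^m)^TWb^n)_{I_0,I_0})/\prod_{a<s}w_a$, with one-row $(d+1)\times(d+1)$ matrices and $d=r+s-1$. The CLZ formula supplies only the scalar moments $e_0^T(b^m)^TWb^ne_0=\gamma(S\theta^mS\theta^nS)_{00}$ (Proposition~\ref{prop:seed-moment}). A scalar identity at $s=1$ cannot simply be lifted to height $s$ by taking compounds.

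The missing idea is Lemma~\ref{lem:moment-lift}. Vandermonde inversion in the functions $\theta_a^m\theta_c^n$ turns the moments into $Cb=\theta C$ and $W=C^TSC$, with $C$ lower triangular. Hence $(b^m)^TWb^n=J^T(S\theta^mS\theta^nS)J$ for all $m,n\in\Z$, where $J=SC$ is upper triangular. Upper triangularity is exactly what makes the $I_0$ initial principal minors of the two sides agree.

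\textbf{Your route to simultaneity does not work.} You correctly identify the simultaneous comparison as the crux, but your route does not produce it. Transposition $q\mapsto q^{-1}$ relates the colors $(r^s)$ and $(s^r)$. It is not an operator exchanging the channels $(12)$ and $(23)$ of one plat. Two separately obtained compound descriptions of $M_0$ and $M_1$ do not give a simultaneous conjugacy of the pair, and the closure needs one.

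The paper obtains it from a single identification $U$: the cross-ratio polynomial model followed by multiplication by the Vandermonde. Under $U$, $C_{12}/2$ and $C_{23}/2$ become $\dd\Gamma(L)-\kappa_s$ and $\dd\Gamma(P_dLP_d)-\kappa_s$ for the one-row color $[r+s-1]$ at rank $N-2s+2$ (Theorem~\ref{thm:two-residues}). So it is not an ``$r$-row setup''; note the shift in width and rank. The second channel comes from the strand swap $X_1\leftrightarrow X_2$, which acts by $P_s$ and is intertwined with $C_s(P_d)$. Then $C_s(G)$ of a one-row KZ fundamental solution solves the rectangular system. Via Drinfeld--Kohno, the pair $(M_0,M_1)$ is therefore simultaneously conjugate to $(\eta_sC_s(M'_0),\eta_sC_s(M'_1))$ (Proposition~\ref{prop:quantum-pair}). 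The trace-ratio closure of Lemma~\ref{lem:intrinsic-closure} is invariant under such conjugation.

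\textbf{Further errors.}
First, $K_{1,-1}$ is the figure-eight knot, not the unknot. Since $G^{1,-1}_\lambda=1$, the formula gives $\sum_\lambda\Psi_{\lambda;r,s}$, which equals $1+D_1D_{-1}$ at $r=s=1$. The unknots are the zero-twist plats, so your normalization check would fail.

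Second, the knot mirror only relates $(m,n)$ to $(-m,-n)$. Mixed-sign cases, such as the figure-eight, need the matrix identity $\overline{B^{(s)}}=T_0(B^{(s)})^{-1}T_0^{-1}$ of Proposition~\ref{prop:mirror}, together with a comparison valid for all integral powers.

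Third, the weights in your Step~1 should be $\sqrt{\dim_qX\dim_qY}/\dim_qV$.
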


Equivalently,
\[
 H_{r,s}(K_{m,n};A,q)
 =\sum_{\lambda\subseteq(r^s)}G_\lambda^{m,n}\Psi_{\lambda;r,s}.
\]
The dependence on the rectangle
is carried by $\Psi_{\lambda;r,s}$; the two twist parameters are
carried by the separate factors of $G_\lambda^{m,n}$.

\subsection{The coefficient systems}
The partition kernels satisfy relations. For example,
\[
 \Phi_{(1,1),r}=\Phi_{(2),r}+D_0\Phi_{(1),r}.
\]
Consequently, at height at least two, a partition-indexed expansion
has more than one coefficient family. Theorems~\ref{thm:compression} and
\ref{thm:all-solutions} describe this freedom by integral changes
to the row kernels:
\[
 \Phi_{\lambda,r}
 =\sum_{k=\lambda_1}^{|\lambda|}T_{\lambda,k}Z^+_{r,k},
 \qquad T_{\lambda,k}\in\R.
\]
The coefficients $T_{\lambda,k}$ depend only on the partition and
on $A,q$. All the partition-indexed Laurent expansions described
by Theorem~\ref{thm:all-solutions} give the same width and double
coefficients.

For double-twist knots, Theorem~\ref{thm:coefficient-bridge} gives
\[
 E_{k,l}(K_{m,n})=
 \sum_{\substack{\lambda\subseteq(k^l)\\
                  \max(k,l)\leq|\lambda|}}
 G_\lambda^{m,n}T_{\lambda,k}(A,q)
 T_{\lambda^T,l}(A,q^{-1}).
\]
Here the transitions are extended by zero outside their stated
ranges, with $T_{\varnothing,0}=1$ and
$T_{\varnothing,k}=0$ for $k>0$. Each sum is finite and belongs to
$\R$. Thus the explicit KNTZ twist factors determine the general
coefficient array, with $E_{k,l}$ involving only partitions inside
its own $k\times l$ rectangle.

The coefficients also respect operations and specializations of
knots. The row kernels form an algebra over $\R$, whose structure
constants give the connected-sum formula of
Corollary~\ref{cor:connected-sum}. At $q=1$,
Theorem~\ref{thm:special-double} expresses every double coefficient
in terms of $E_{1,1}(K;A,1)$ by counting bipartite graphs without
isolated vertices.

\subsection{Proof strategy}
The general expansion combines Laurent integrality and the
sign and transpose symmetries of colored HOMFLY--PT invariants
\cite{Morton,Zhu} with representation symmetries at finite rank.
At $A=q^M$, the type-$A$ diagram involution identifies the colors
$r\omega_s$ and $r\omega_{M-s}$. Together with transposition, it
also gives a width reflection at $A=q^{-M}$. At the $k$th step of
the width expansion, this reflection reduces the remainder to
already determined smaller colors. The remainder therefore vanishes
at $A=\pm q^{-M}$ for $k\leq M\leq2k-1$. These roots give all the
prime factors of the diagonal kernel, so unique factorization
proves that the recursive division is exact in $\R$.
Positive-rank reflection passes to the coefficients $C_{k,s}$ and
allows the same argument in the height. This proves
Theorem~\ref{thm:double-main}. The integral shape transitions follow
by applying the width argument to the hook-content kernels.

The double-twist evaluation involves two adjacent fusion channels
and the pairing that closes the braid. We compare these data in
stable rank $N\geq2s$. For $R=\mathbb S_{(r^s)}\C^N$, a matrix
cross-ratio realizes
\[
 \Inv_{\gl_N}(R\otimes R^*\otimes R\otimes R^*)
 \simeq
 \{f\in\C[x_1,\ldots,x_s]^{\mathfrak S_s}:
                  \deg_{x_i}f\leq r\text{ for every }i\}.
\]
Multiplication by the Vandermonde identifies the right side with
$\bigwedge^s\C[t]_{\leq r+s-1}$. Under this same identification,
both adjacent Casimirs become additive exterior powers of one-row
operators at rank $N'=N-2s+2$, with a common scalar shift
(Theorem~\ref{thm:two-residues}).

The Drinfeld--Kohno theorem transports this comparison to quantum
pure braids through KZ monodromy \cite{Drinfeld,Kohno,Brochier}.
Vacuum normalization fixes the scalar factors of the transported
operators. The cup and cap pairings are expressed by spectral
projectors, giving a trace ratio invariant under simultaneous
conjugation. This preserves the reduced closure. The rectangular knot
invariant is thereby expressed as an initial principal minor of a word
in one-row matrices; see Theorem~\ref{thm:rt-determinant}.

On the formula side, the Gaussian minors in $B^{(s)}$ have the same
exterior-power structure. Cauchy--Binet turns the KNTZ shape sum into a second determinant. The one-row identity of Chen, Liu,
and Zhu identifies the vacuum moments of the two matrix systems;
Lemma~\ref{lem:moment-lift} turns this identity into an
upper-triangular congruence, which identifies their initial principal
minors. A mirror-inverse identity treats negative twists. Finally,
the equalities for infinitely many stable ranks $A=q^N$ give the
identity in independent variables $A,q$.

Sections~\ref{sec:rank} and \ref{sec:double} prove the integral
expansions and the shape transitions.
Section~\ref{sec:classical} establishes the classical two-operator
comparison, Section~\ref{sec:closure} passes to the reduced quantum
closure, and Section~\ref{sec:double-twist-proof} proves the double-twist formula.
Section~\ref{sec:coefficients} treats the coefficient comparison,
connected sums, and $q=1$.
Section~\ref{sec:consequences} gives examples and the obstruction to
extra two-sided factors. The appendices contain further Gaussian
identities and the two-strand Rosso--Jones formula in our convention.

\subsection{Further questions}
Proposition~\ref{prop:obstruction} gives a concrete obstruction to
extracting all two-sided box factors: $D_0$ does not divide $E_{2,1}$
for $T(3,4)$. A first problem is to determine which knots admit
additional factors in specified coefficients and how these factors are
governed by the Alexander polynomial and the defect studied in
\cite{KM,LM}. The fixed kernels in Theorem~\ref{thm:double-main}
formulate this problem independently of a choice of partition
coefficients.

A second problem is to compare, after the specialization $A=q^N$, the
double expansion on admissible rectangular colors with the central
interpolation expansion of Beliakova and Gorsky \cite{BG}. An explicit
relation between the two coefficient systems would identify the
fixed-rank data detected by rectangular colors and describe how
two-variable integrality behaves under rank specialization. The
HOMFLY--PT limit of their interpolation polynomials provides a natural
point of comparison \cite[Remark~11.7]{BG}.

A third direction is to identify additional structure that selects a
partition-indexed coefficient family from the freedom described in
Theorem~\ref{thm:all-solutions}. The prescribed rational twist factors
for double-twist knots provide a concrete test case. The refined and
homological formulas in \cite[Section~3]{KNTZ} suggest graded analogues
of the finite coefficient identities. A related problem is to extend
the simultaneous channel comparison to a graded setting while preserving
the closure pairing.

\Needspace{12\baselineskip}
\section{Rank symmetries and integral interpolation}\label{sec:rank}
Morton's theorem on meridian-eigenvector $(1,1)$-tangle invariants gives
\begin{equation}\label{eq:standard-integrality}
 H_\lambda(K;A,q)\in \R
\end{equation}
for every partition $\lambda$ \cite[Theorem~1]{Morton}. The sign and transpose properties in the reduced normalization are
\begin{align}
 H_\lambda(K;-A,q)&=H_\lambda(K;A,q),\label{eq:A-even}\\
 H_\lambda(K;A,-q)&=H_\lambda(K;A,q),\label{eq:q-even}\\
 H_\lambda(K;A,q^{-1})&=H_{\lambda^T}(K;A,q).
 \label{eq:transpose}
\end{align}
These follow from \cite[Theorem~1.1, Corollary~1.3, and equations~(1.13)--(1.15)]{Zhu}; the signs on the colored knot and colored unknot cancel after reduction. In Morton's variables one takes $v=A^{-1}$ and $s=q$; in Zhu's notation $a=A$.

\subsection{Finite-rank reflections}
At $A=q^M$, with $\ell(\lambda)\leq M$, the colored invariant is the reduced quantum-group invariant in color $\lambda$ \cite{RT,Zhu}. For a zero-framed knot, its scalar depends only on the corresponding $\mathfrak{sl}_M$ highest weight. Indeed, the central factor in the passage from $\mathfrak{gl}_M$ to $\mathfrak{sl}_M$ contributes only the abelian self-linking factor; its exponent is zero. In particular, tensoring the color by a determinant power leaves the reduced scalar unchanged.

Let $\omega_1,\ldots,\omega_{M-1}$ be the fundamental weights of
$\slie_M$, and let $\rho$ be the half-sum of its positive roots.

\begin{lemma}\label{lem:diagram}
For $M\geq2$, the type-$A_{M-1}$ diagram involution preserves the reduced quantum invariant of every zero-framed knot. In particular, the invariants in colors $r\omega_s$ and $r\omega_{M-s}$ are equal for $1\leq s<M$.
\end{lemma}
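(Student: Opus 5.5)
The plan is to realize the diagram involution $\sigma$ of $A_{M-1}$ as a Hopf-algebra automorphism of $U_q(\slie_M)$, and then show that it carries the whole Reshetikhin--Turaev construction for color $V_\lambda$ to the one for color $V_{\sigma(\lambda)}$. The involution is $E_i\mapsto E_{M-i}$, $F_i\mapsto F_{M-i}$, $K_i\mapsto K_{M-i}$. It preserves the Cartan matrix and the Serre relations, and it is compatible with the coproduct and antipode. Two facts then need checking.

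First, the pullback $\sigma^*V_\lambda$ of the irreducible module $V_\lambda$ is irreducible. Its highest weight is $\sigma(\lambda)$, where $\sigma(\omega_i)=\omega_{M-i}$. Hence $\sigma^*V_\lambda\cong V_{\sigma(\lambda)}$. For $\lambda=r\omega_s$ this gives $r\omega_{M-s}$. Equivalently, $\sigma(\lambda)=-w_0\lambda$, so $V_{\sigma(\lambda)}\cong V_\lambda^*$.

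Second, $\sigma$ preserves the ribbon structure. The universal $R$-matrix is built from the Cartan part $q^{\sum (B^{-1})_{ij}H_i\otimes H_j}$ and the quasi-$R$-matrix, which is a product of $q$-exponentials in $E_\beta\otimes F_\beta$ over a convex ordering of the positive roots. The Cartan part is $\sigma\otimes\sigma$-invariant because the symmetric form is $\sigma$-invariant. For the quasi-$R$-matrix, I will use its characterization as the unique element $\Theta$ with $\Theta_0=1$ intertwining $\Delta$ and $\bar\Delta$. This characterization is preserved by $\sigma\otimes\sigma$, since $\sigma$ commutes with the bar involution and with $\Delta$. Uniqueness then gives $(\sigma\otimes\sigma)(\mathcal R)=\mathcal R$, which avoids choosing a $\sigma$-stable reduced word.

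The ribbon element is $v=K_{2\rho}^{-1}u$, with $u$ built from $\mathcal R$ and the antipode. Since $\sigma(\rho)=\rho$, we get $\sigma(K_{2\rho})=K_{2\rho}$, and so $\sigma(v)=v$. Consequently the braiding, twists, and cup/cap maps on tensor products of $\sigma^*V_\lambda$ agree with those on $V_\lambda$ transported by the identity linear map. The functorial Reshetikhin--Turaev invariant of any framed $(1,1)$-tangle $T$ colored by $V_\lambda$ is a scalar $c_\lambda(T)$. That scalar is therefore also the invariant colored by $\sigma^*V_\lambda\cong V_{\sigma(\lambda)}$, because an isomorphism of ribbon modules intertwines all structure maps.

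For the reduced knot invariant with zero framing, I will cut the knot open and use the $(1,1)$-tangle scalar, which is exactly the reduced invariant. This gives $c_\lambda(K)=c_{\sigma(\lambda)}(K)$. There is a framing point to settle: the twist eigenvalues $q^{\langle\lambda,\lambda+2\rho\rangle}$ of $\lambda$ and $\sigma(\lambda)$ coincide since $\sigma$ is an isometry fixing $\rho$. So even the framing correction agrees, and with framing zero no correction is needed. The normalization step is compatible with the $\gl_M$-versus-$\slie_M$ remark preceding the lemma. That remark identifies $H_{(r^s)}(K;q^M,q)$ with the $\slie_M$ invariant of $r\omega_s$, which justifies specializing the lemma to $\gl_M$ colors.

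The main obstacle is the invariance of the universal $R$-matrix. If the characterization-by-uniqueness argument proves awkward in the completed tensor product, I will argue representation-theoretically instead. The element $(\sigma\otimes\sigma)(\mathcal R)$ also yields a braiding, with the same Cartan part and with quasi-$R$-matrix in the same graded completion. The uniqueness of Lusztig's $\Theta$ in each weight component then gives equality. A cleaner alternative is $V_{\sigma(\lambda)}\cong V_\lambda^*$: the invariant of $K$ in color $V^*$ equals the invariant of the reversed knot $K^{\mathrm{rev}}$ in color $V$. So one could instead use that a $(1,1)$-tangle scalar is unchanged under orientation reversal, by rotating the diagram by $\pi$ in the plane. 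I would keep this as a backup.
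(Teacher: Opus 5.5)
Your argument is correct and follows essentially the paper's route: a Hopf automorphism that fixes the Cartan part and the root part of the universal $R$-matrix, and fixes $\rho$ and hence the ribbon element, so that twisting by it is a ribbon equivalence sending $\omega_s$ to $\omega_{M-s}$; the only difference is that you get invariance of the root part from the uniqueness of Lusztig's quasi-$R$-matrix $\Theta$, where the paper uses the canonical tensor of the Borel pairing. Drop the backup, however: rotating a diagram by $\pi$ in the plane is an orientation-preserving ambient isotopy and never produces $K^{\mathrm{rev}}$ (non-invertible knots exist), so equating the $V$- and $V^*$-colored invariants of a knot needs algebraic input such as the Chevalley involution, not a planar move.
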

\begin{proof}
On the Drinfeld--Jimbo generators, the map
\[
 E_i\longmapsto E_{M-i},\qquad
 F_i\longmapsto F_{M-i},\qquad
 K_i\longmapsto K_{M-i}
\]
preserves the relations and the coproduct. Here $K_i$ denotes the simple-root Cartan generator. The diagram involution preserves the invariant bilinear form and the positive-root system. It therefore preserves the Cartan part of the universal $R$-matrix. The root part is defined by the canonical pairing of the positive and negative Borel parts; the involution preserves that pairing and its canonical tensor. Consequently it fixes the universal $R$-matrix. It fixes $\rho$, and hence the pivotal element, and it fixes the ribbon element determined by these data.

Twisting a module by this automorphism thus gives a ribbon equivalence preserving every elementary crossing, evaluation, and coevaluation in the quantum tangle functor. The closed evaluations and the corresponding unknot evaluations agree. Their quotient is the reduced invariant. The involution sends $\omega_s$ to $\omega_{M-s}$, proving the assertion. 
\end{proof}
These constructions may be taken in the standard $h$-adic realization of the generic ribbon category \cite{RT}; equality there gives equality of the generic rational functions.

\begin{proposition}\label{prop:reflections}
For every knot $K$, the rectangular array satisfies
\begin{align}
 H_{r,s}(q^M,q)&=H_{r,M-s}(q^M,q)
 &&(M\geq1,\ r\geq0,\ 0\leq s\leq M),\label{eq:positive-reflection}\\
 H_{r,s}(q^{-M},q)&=H_{M-r,s}(q^{-M},q)
 &&(M\geq1,\ s\geq0,\ 0\leq r\leq M).
 \label{eq:negative-reflection}
\end{align}
In particular, $H_{r,s}(q^s,q)=1$ for $s\geq1$.
\end{proposition}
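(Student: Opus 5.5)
The positive reflection \eqref{eq:positive-reflection} comes from Lemma~\ref{lem:diagram} together with determinant invariance; the negative one follows from it by transposition \eqref{eq:transpose} and a sign change of $A$.

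\textbf{Positive reflection.} Fix $M\geq1$ and $0\leq s\leq M$. The endpoint cases come first. For $s=0$ both sides are $1$ and $(r^M)$, and for $s=M$ the same holds with the roles exchanged. At $A=q^M$ the color $(r^M)$ is the $r$th power of the determinant of $U_q(\gl_M)$. By the observation preceding Lemma~\ref{lem:diagram}, its reduced scalar equals that of the trivial $\slie_M$ weight, which is $1$. This also covers $M=1$. For $1\leq s<M$ we have $\ell((r^s))=s\leq M$, so $H_{r,s}(q^M,q)$ is the reduced $U_q(\gl_M)$ invariant colored by $(r^s)$. Its $\slie_M$ highest weight is $r\omega_s$. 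Likewise $H_{r,M-s}(q^M,q)$ has $\slie_M$ weight $r\omega_{M-s}$. Since the reduced scalar depends only on the $\slie_M$ weight, Lemma~\ref{lem:diagram} identifies the two values. One point needs care: the specialization $A=q^M$ of the Laurent polynomial $H_\lambda\in\R$, which exists by \eqref{eq:standard-integrality}, must equal the generic quantum invariant. This is the cited statement from \cite{RT,Zhu} for $\ell(\lambda)\leq M$, and it applies to both colors.

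\textbf{Negative reflection.} Fix $0\leq r\leq M$. By \eqref{eq:transpose} applied to the rectangle, $(r^s)^T=(s^r)$, so
\[
 H_{r,s}(A,q)=H_{s,r}(A,q^{-1}).
\]
Substitute $A=q^{-M}$ and write $p=q^{-1}$, so that $A=p^M$. This gives $H_{r,s}(q^{-M},q)=H_{s,r}(p^M,p)$. Now apply \eqref{eq:positive-reflection} in the variable $p$, with the roles $(r,s)\to(s,r)$; this is valid since $0\leq r\leq M$. It gives $H_{s,r}(p^M,p)=H_{s,M-r}(p^M,p)$. Transposing back with \eqref{eq:transpose} yields $H_{M-r,s}(q^{-M},q)$. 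Because all the quantities are Laurent polynomials, the substitution $q\mapsto q^{-1}$ is legitimate as an identity in $\R$. The parity relations \eqref{eq:A-even}, \eqref{eq:q-even} are not needed here, but they would handle $A=-q^{\pm M}$ in the same way if that were required.

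\textbf{The special case.} Take $M=s$ in \eqref{eq:positive-reflection}. This gives $H_{r,s}(q^s,q)=H_{r,0}(q^s,q)=1$.

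\textbf{Main obstacle.} The main difficulty is justifying that the reduced $\gl_M$ scalar depends only on the $\slie_M$ weight, since Lemma~\ref{lem:diagram} is a statement about $\slie_M$. The argument rests on the framing-zero cancellation noted earlier: the central $\gl_1$ factor contributes $q^{c\cdot w}$, where $w$ is the writhe of the chosen diagram. This factor is removed by the zero-framing normalization, and in any case it cancels in the ratio with the unknot once the framing correction is applied. Once this is granted, the rest of the argument is formal.
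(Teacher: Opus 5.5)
Your proposal is correct and follows the paper's proof. Positive reflection comes from Lemma~\ref{lem:diagram} for $1\leq s<M$ together with the determinant-power endpoints; you fold the rank-one case into the determinant observation, where the paper argues directly with the abelian $U_q(\gl_1)$ factors. Negative reflection is obtained, as in the paper, by transposing, applying positive reflection at $\bar q=q^{-1}$, and transposing back; the sign change of $A$ mentioned in your opening sentence is never actually used.
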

\begin{proof}
For $M\geq2$ and $0<s<M$, the partition $(r^s)$ represents the $\mathfrak{sl}_M$ highest weight $r\omega_s$. Lemma~\ref{lem:diagram} identifies its reduced scalar with that of $(r^{M-s})$. At $s=0$ or $s=M$, the colors are the empty partition and a determinant power, and both scalars are one. This proves \eqref{eq:positive-reflection} for $M\geq2$.

At rank one, the color $(r)$ is a one-dimensional $U_q(\mathfrak{gl}_1)$ module. The root part of the universal matrix is trivial. Its abelian Cartan and ribbon factors give a power determined by the framing, and that power is one for framing zero. Thus $H_{r,1}(q,q)=1$, proving the rank-one case as well.

For the second identity, put $\bar q=q^{-1}$. By transposition and the first identity,
\[
 \begin{aligned}
 H_{r,s}(q^{-M},q)
 &=H_{s,r}(\bar q^M,\bar q)\\
 &=H_{s,M-r}(\bar q^M,\bar q)\\
 &=H_{M-r,s}(q^{-M},q).
 \end{aligned}
\]
The empty-color cases follow from the same calculation. The substitutions are well defined in the Laurent polynomial ring by \eqref{eq:standard-integrality}.
\end{proof}

\subsection{Laurent interpolation in the width}\label{sec:interpolation}
The following argument uses only Laurent polynomials and the kernels \eqref{eq:row-kernel}.

\begin{lemma}\label{lem:collision}
Let $k\geq1$, $0\leq t<k$, and $M=k+t$. At $A=q^{-M}$,
\begin{align}
 Z^+_{k,j}&=Z^+_{t,j}&&(0\leq j\leq t),\label{eq:collision-agreement}\\
 Z^+_{k,j}&=0&&(t<j\leq k).\label{eq:collision-vanishing}
\end{align}
\end{lemma}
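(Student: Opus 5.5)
We compute directly from the definition \eqref{eq:row-kernel}, specializing $A=q^{-M}$ with $M=k+t$. The specialization gives $D_a=\{q^{a-M}\}=\br{a-M}$, so
\[
 Z^+_{k,j}\big|_{A=q^{-M}}=\qb kj\prod_{i=0}^{j-1}\br{k+i-M}
 =\qb kj\prod_{i=0}^{j-1}\br{i-t}.
\]

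\emph{Vanishing.} For $t<j\leq k$, the product runs over $i=0,\dots,j-1$ and hence contains $i=t$. The corresponding factor is $\br{0}=0$. This proves \eqref{eq:collision-vanishing}.

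\emph{Agreement.} Similarly,
\[
 Z^+_{t,j}\big|_{A=q^{-M}}=\qb tj\prod_{i=0}^{j-1}\br{t+i-M}
 =\qb tj\prod_{i=0}^{j-1}\br{i-k}.
\]
For $0\leq j\leq t$, we therefore need the identity
\[
 \qb kj\prod_{i=0}^{j-1}\br{i-t}=\qb tj\prod_{i=0}^{j-1}\br{i-k}.
\]
Using $\br{-a}=-\br{a}$, the left side equals
\[
 (-1)^j\qb kj\prod_{i=0}^{j-1}\br{t-i}
 =(-1)^j\br{1}^j\,\frac{[k]_q!}{[j]_q![k-j]_q!}\cdot\frac{[t]_q!}{[t-j]_q!}.
\]
Here we used $\prod_{i=0}^{j-1}\br{t-i}=\br{1}^j[t]_q!/[t-j]_q!$, which is valid since $j\leq t$.

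The right side is the same expression with $k$ and $t$ interchanged. That is also valid, because $j\leq t<k$. The resulting expression
\[
 (-1)^j\br1^j\,\frac{[k]_q!\,[t]_q!}{[j]_q!\,[k-j]_q!\,[t-j]_q!}
\]
is symmetric in $k$ and $t$, so the two sides agree. This proves \eqref{eq:collision-agreement}.

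The only point needing care is that these are identities of Laurent polynomials. Since $j\leq t<k$, all factorials involved have nonnegative arguments, so the manipulations are legitimate. For $j=0$ both sides equal $1$.
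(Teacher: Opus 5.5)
Your proof is correct and follows the paper's argument. Both specialize $D_{k+i}$ to $\br{i-t}$ and get the vanishing from the factor $\br{0}$ at $i=t$. For $j\leq t$, both write each specialized kernel as $(-1)^j\br{1}^j[k]_q![t]_q!/([j]_q![k-j]_q![t-j]_q!)$, which is symmetric in $k$ and $t$; this is the paper's expression $(-1)^j\br{k}!\br{t}!/(\br{j}!\br{k-j}!\br{t-j}!)$ rewritten using $\br{a}!=\br{1}^a[a]_q!$.
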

\begin{proof}
At this specialization, $D_{k+i}=\br{i-t}$. If $j>t$, the product defining $Z^+_{k,j}$ contains $\br{0}$. For $j\leq t$,
\[
 Z^+_{k,j}(q^{-M},q)
 =(-1)^j\frac{\br{k}!\br{t}!}
 {\br{j}!\br{k-j}!\br{t-j}!}.
\]
Interchanging $k$ and $t$ in this expression gives $Z^+_{t,j}(q^{-M},q)$.
\end{proof}

\begin{theorem}\label{thm:abstract}
Let $F_r\in \R$, $r\geq0$, and let $\eps\in\{1,-1\}$. Suppose
\begin{align}
 F_r(-A,q)&=\eps F_r(A,q),\label{eq:abstract-parity}\\
 F_r(q^{-M},q)&=F_{M-r}(q^{-M},q)
 &&(M\geq1,\ 0\leq r\leq M).
 \label{eq:abstract-reflection}
\end{align}
Then there is a unique sequence $c_k\in \R$ satisfying
\begin{equation}\label{eq:abstract-expansion}
 F_r=\sum_{k=0}^r Z^+_{r,k}c_k\qquad(r\geq0).
\end{equation}
It has $c_0=F_0$ and $c_k(-A,q)=\eps(-1)^kc_k(A,q)$.
\end{theorem}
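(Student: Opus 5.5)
Uniqueness and the definition of the $c_k$ over $\Q(A,q)$ are immediate from triangularity. The diagonal kernel $Z^+_{k,k}=\prod_{i=0}^{k-1}D_{k+i}$ is nonzero, so \eqref{eq:abstract-expansion} forces $c_0=F_0$ and, recursively,
\[
 c_k=\frac{R_k}{Z^+_{k,k}},\qquad R_k=F_k-\sum_{j=0}^{k-1}Z^+_{k,j}c_j .
\]
The content of the theorem is therefore that $Z^+_{k,k}$ divides $R_k$ in $\R$. I will prove this by induction on $k$, assuming $c_0,\ldots,c_{k-1}\in\R$. Under this hypothesis $R_k\in\R$.

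The parity claim is a separate induction. Each $D_a$ is odd in $A$, so $Z^+_{r,j}(-A,q)=(-1)^jZ^+_{r,j}(A,q)$. If $c_j(-A,q)=\eps(-1)^jc_j(A,q)$ for $j<k$, then $R_k(-A,q)=\eps R_k(A,q)$ by \eqref{eq:abstract-parity}. Dividing by $Z^+_{k,k}$, which has parity $(-1)^k$, gives the stated parity of $c_k$.

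For divisibility, fix $t$ with $0\le t<k$ and put $M=k+t$, so that $k\le M\le 2k-1$. I specialize at $A=q^{-M}$. By \eqref{eq:abstract-reflection}, $F_k(q^{-M},q)=F_t(q^{-M},q)$. By Lemma~\ref{lem:collision}, $Z^+_{k,j}(q^{-M},q)$ equals $Z^+_{t,j}(q^{-M},q)$ for $j\le t$ and vanishes for $t<j\le k$. Hence
\[
 R_k(q^{-M},q)=F_t(q^{-M},q)-\sum_{j=0}^{t}Z^+_{t,j}(q^{-M},q)\,c_j(q^{-M},q)=0,
\]
because \eqref{eq:abstract-expansion} already holds at $r=t<k$. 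This is a genuine identity in $\R$, so it may be specialized. By parity, $R_k(-q^{-M},q)=\pm R_k(q^{-M},q)=0$ as well.

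So $R_k$ vanishes on $A=\pm q^{-M}$ for $M=k,\ldots,2k-1$. Consider the irreducible factors of
\[
 D_{k+i}=\frac{A^2q^{2(k+i)}-1}{Aq^{k+i}},\qquad i=0,\ldots,k-1 .
\]
Up to units of $\R$, these are exactly $A-q^{-(k+i)}$ and $A+q^{-(k+i)}$. Over all $i$, the exponents $k+i$ run through $k,\dots,2k-1$, so these $2k$ linear polynomials in $A$ are pairwise non-associate primes of the UFD $\R$. Each of them divides $R_k$: viewing $R_k\in\Z[q^{\pm1}][A^{\pm1}]$, vanishing at $A=\pm q^{-M}$ means the monic factor $A\mp q^{-M}$ divides it. Since the factors are pairwise coprime, their product divides $R_k$. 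Therefore $Z^+_{k,k}\mid R_k$ in $\R$, noting that $\qb kk=1$.

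The main obstacle is the divisibility step. It needs the remainder to vanish at all $2k$ roots, and needs those roots to exhaust the prime factorization of $Z^+_{k,k}$ with no multiplicities. The collision lemma supplies the first point and the distinct exponents $k,\dots,2k-1$ supply the second. One technical check remains: divisibility by $A\mp q^{-M}$ in the Laurent ring follows from vanishing of the specialization. This holds because, after multiplying by a power of $A$, we are dividing a polynomial in $A$ by a monic linear factor with coefficients in $\Z[q^{\pm1}]$.
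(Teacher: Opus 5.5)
Your proposal is correct and follows the paper's proof step for step: triangular recursion gives uniqueness, the remainder $R_k$ has parity $\eps$, Lemma~\ref{lem:collision} and the reflection hypothesis make it vanish at $A=q^{-M}$ for $k\le M\le 2k-1$, parity supplies the roots $A=-q^{-M}$, and unique factorization with the primes $A\pm q^{-M}$ gives divisibility by $Z^+_{k,k}=\prod_{M=k}^{2k-1}D_M$ up to a unit. The only small difference is that you derive divisibility from vanishing by dividing by a monic linear factor, while the paper notes that the quotient of $\R$ by each $A\pm q^{-M}$ is $\Z[q^{\pm1}]$.
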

\begin{proof}
The diagonal kernel is
\begin{equation}\label{eq:diagonal}
 Z^+_{k,k}=\prod_{M=k}^{2k-1}D_M\neq0.
\end{equation}
Thus the coefficients are uniquely determined over $\Q(A,q)$ by triangular recursion. We prove their integrality and parity by induction.

Set $c_0=F_0$. Suppose $c_0,\ldots,c_{k-1}$ have been constructed in $\R$ with the stated parity, and put
\[
 R_k=F_k-\sum_{j=0}^{k-1}Z^+_{k,j}c_j\in \R.
\]
Since $Z^+_{k,j}$ has $A$-parity $(-1)^j$, every term in this expression has parity $\eps$. Fix $t\in\{0,\ldots,k-1\}$ and put $M=k+t$. By Lemma~\ref{lem:collision} and \eqref{eq:abstract-reflection},
\[
 \begin{aligned}
 R_k(q^{-M},q)
 &=F_t(q^{-M},q)
   -\sum_{j=0}^tZ^+_{t,j}(q^{-M},q)c_j(q^{-M},q)\\
 &=0.
 \end{aligned}
\]
The induction hypothesis gives the last equality. Parity then implies $R_k(-q^{-M},q)=0$.

The ring $\R$ is a UFD. For each $M$, the elements $A-q^{-M}$ and $A+q^{-M}$ are prime: the quotient by either element is $\Z[q^{\pm1}]$. As $M$ ranges from $k$ to $2k-1$, these primes are pairwise nonassociate. Therefore
\[
 \prod_{M=k}^{2k-1}(A-q^{-M})(A+q^{-M})\mid R_k.
\]
For each $M$,
\[
 D_M=A^{-1}q^M(A-q^{-M})(A+q^{-M}).
\]
The product differs from \eqref{eq:diagonal} by a Laurent unit. It follows that
\[
 c_k=R_k/Z^+_{k,k}\in \R.
\]
The parity of this quotient is $\eps(-1)^k$, completing the induction.
\end{proof}

\begin{proof}[Proof of Theorem~\ref{thm:row-main}]
For fixed $s$, apply Theorem~\ref{thm:abstract} with $F_r=H_{r,s}$ and $\eps=1$. Its hypotheses follow from \eqref{eq:A-even} and Proposition~\ref{prop:reflections}. This proves \eqref{eq:row-expansion}, uniqueness, and parity.

At $A=q^s$, the array has value $H_{r,s}=1$. Every specialized diagonal kernel is nonzero in $\Q(q)$, since
\[
 Z^+_{k,k}(q^s,q)=\prod_{M=k}^{2k-1}\br{s+M}\neq0.
\]
Triangular uniqueness gives $C_{k,s}(q^s,q)=0$ for $k>0$. Parity supplies the root $A=-q^s$ as well. It follows that
$D_{-s}=A^{-1}q^{-s}(A-q^s)(A+q^s)$ divides $C_{k,s}$ in $\R$.
\end{proof}

\begin{proof}[Proof of Corollary~\ref{cor:kntz}]
For a row $(k)$, the hook formula gives
\[
 D_{(1^k)}^{(r)}(q^{-1})
 =\prod_{i=0}^{k-1}\frac{\br{r-i}}{\br{k-i}}
 =\qb rk.
\]
The contents of $(k)$ are $0,\ldots,k-1$, so $\Phi_{(k),r}=Z^+_{r,k}$. The row-supported sum \eqref{eq:KNTZ-general} is therefore \eqref{eq:row-expansion}. The coefficient ring, empty-shape value, height condition, and width independence are immediate from \eqref{eq:row-lift} and Theorem~\ref{thm:row-main}.
\end{proof}

The construction is given by the finite recursion
\begin{equation}\label{eq:row-recursion}
 C_{k,s}=\frac{H_{k,s}-1-\sum_{j=1}^{k-1}Z^+_{k,j}C_{j,s}}
 {\prod_{M=k}^{2k-1}D_M}\qquad(k\geq1).
\end{equation}
Every quotient in \eqref{eq:row-recursion} is exact in $\R$. In particular,
\[
 C_{1,s}=\frac{H_{1,s}-1}{D_1},\qquad
 C_{2,s}=\frac{H_{2,s}-1-(q+q^{-1})D_2C_{1,s}}{D_2D_3}.
\]
These coefficients use only the rectangular invariants of widths at most $k$ at the fixed height $s$.

\begin{corollary}\label{cor:rank-congruences}
For $r\neq t$, $r,t\geq0$, and $s\geq1$,
\[
 D_{r+t}D_{-s}\mid H_{r,s}-H_{t,s}
 \quad\text{in }\R.
\]
\end{corollary}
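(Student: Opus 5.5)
We must show $D_{r+t}D_{-s}\mid H_{r,s}-H_{t,s}$ for $r\neq t$. The plan is to prove divisibility by the two factors separately, each by exhibiting roots in $A$, and then combine them using unique factorization in $\R$. Note that $D_{r+t}=A^{-1}q^{r+t}(A-q^{-(r+t)})(A+q^{-(r+t)})$ and $D_{-s}=A^{-1}q^{-s}(A-q^{s})(A+q^{s})$. Since $r+t\geq1$ and $s\geq1$, the exponents $-(r+t)$ and $s$ have opposite signs. Hence the four primes $A\mp q^{-(r+t)}$ and $A\mp q^{s}$ are pairwise nonassociate, exactly as in the proof of Theorem~\ref{thm:abstract}. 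It therefore suffices to show that $H_{r,s}-H_{t,s}$ vanishes at $A=\pm q^{-(r+t)}$ and at $A=\pm q^{s}$.

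For the factor $D_{-s}$, we use Proposition~\ref{prop:reflections}, which gives $H_{r,s}(q^s,q)=1$ for every $r\geq0$, including $r=0$ where $H_{0,s}=1$. Therefore $H_{r,s}-H_{t,s}$ vanishes at $A=q^s$. The evenness \eqref{eq:A-even} then gives vanishing at $A=-q^s$ as well.

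For the factor $D_{r+t}$, set $M=r+t\geq1$. Then $0\leq r,t\leq M$ and $M-r=t$. The negative reflection \eqref{eq:negative-reflection} gives $H_{r,s}(q^{-M},q)=H_{t,s}(q^{-M},q)$, so the difference vanishes at $A=q^{-M}$. Evenness in $A$ again supplies the root $A=-q^{-M}$.

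An alternative route uses the expansion \eqref{eq:row-expansion} together with Lemma~\ref{lem:collision}; this is the route suggested by the placement right after Theorem~\ref{thm:row-main}. Assume $t<r$. At $A=q^{-(r+t)}$, Lemma~\ref{lem:collision} shows that the terms of $H_{r,s}$ with $j\leq t$ match those of $H_{t,s}$ and the remaining terms vanish. Writing $H_{r,s}-H_{t,s}=\sum_{j}(Z^+_{r,j}-Z^+_{t,j})C_{j,s}$, the factor $D_{-s}$ comes from $C_{j,s}\in D_{-s}\R$ for $j\geq1$, since the $j=0$ terms cancel. One would also need $D_{r+t}\mid Z^+_{r,j}-Z^+_{t,j}$, which is a kernel identity requiring more work.

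The direct root argument avoids that kernel identity and is the one I would write. The only real point needing care is the combination step: the two prime pairs must be distinct, and this is guaranteed by the opposite signs of the exponents.
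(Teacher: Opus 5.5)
Your main argument is correct and is essentially the paper's proof. Negative reflection at $M=r+t$ together with $A$-evenness gives $D_{r+t}$, the determinant-rank specialization $H_{r,s}(q^s,q)=1$ together with evenness gives $D_{-s}$, and the primes $A\mp q^{-(r+t)}$ and $A\mp q^{s}$ are pairwise nonassociate, so the product divides the difference. Your aside slightly overstates the remaining work: the kernel divisibility $D_{r+t}\mid Z^+_{r,j}-Z^+_{t,j}$ follows at once from Lemma~\ref{lem:collision} and the common $A$-parity $(-1)^j$, so that alternative route also closes immediately.
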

\begin{proof}
Negative-rank reflection at $M=r+t$ and $A$-evenness give the factor $D_{r+t}$. Determinant-rank specialization and evenness give $D_{-s}$. Their prime factors are distinct because $r+t\geq1$ and $s\geq1$, so their product divides the difference.
\end{proof}

\section{Two-dimensional coefficients and shape kernels}\label{sec:double}
\subsection{Interpolation in the height}
For $s=0$, put $C_{0,0}=1$ and $C_{k,0}=0$ for $k>0$, consistently with the expansion of $H_{r,0}=1$.

\begin{proposition}\label{prop:coefficient-reflection}
For $M\geq1$, $0\leq s\leq M$, and $k\geq0$,
\begin{equation}\label{eq:height-coefficient-reflection}
 C_{k,s}(q^M,q)=C_{k,M-s}(q^M,q).
\end{equation}
For $s,t\geq0$ with $s+t\geq1$ and $k\geq0$,
\begin{equation}\label{eq:height-coefficient-divisibility}
 D_{-(s+t)}\mid C_{k,s}-C_{k,t}.
\end{equation}
\end{proposition}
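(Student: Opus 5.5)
Specialize the width expansion \eqref{eq:row-expansion} at $A=q^M$ and compare the two heights $s$ and $M-s$ through uniqueness of triangular coefficients. For the first claim, fix $M\geq1$ and $0\leq s\leq M$. By \eqref{eq:positive-reflection}, $H_{r,s}(q^M,q)=H_{r,M-s}(q^M,q)$ for every $r\geq0$. Substituting $A=q^M$ into \eqref{eq:row-expansion} for the heights $s$ and $M-s$ gives two expansions of the same sequence in the specialized kernels $Z^+_{r,k}(q^M,q)$. The boundary heights $0$ and $M$ are included: at height $0$ the convention $C_{0,0}=1$, $C_{k,0}=0$ expands $H_{r,0}=1$, and at height $M$ we use the same expansion. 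The diagonal kernels $Z^+_{k,k}(q^M,q)=\prod_{j=k}^{2k-1}\br{M+j}$ are nonzero in $\Q(q)$ because $M+j\geq1$. Triangular recursion therefore determines the coefficients uniquely over $\Q(q)$, which yields \eqref{eq:height-coefficient-reflection}. The specializations are legitimate since all $C_{k,s}\in\R$ by Theorem~\ref{thm:row-main}.

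For the divisibility, put $M=s+t\geq1$. The pair $(s,t)$ satisfies $0\leq s\leq M$ and $t=M-s$, so the first part gives
\[
 (C_{k,s}-C_{k,t})(q^M,q)=0.
\]
For the second root we use parity. Theorem~\ref{thm:row-main} gives $C_{k,s}(-A,q)=(-1)^kC_{k,s}(A,q)$ for $s\geq1$. The same parity holds trivially for $s=0$, since there $C_{k,0}$ is $1$ or $0$ and vanishes for $k>0$. The difference $C_{k,s}-C_{k,t}$ therefore has $A$-parity $(-1)^k$, and so it also vanishes at $A=-q^M$.

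The elements $A-q^M$ and $A+q^M$ are nonassociate primes of the UFD $\R$, as in the proof of Theorem~\ref{thm:abstract}. Their product is, up to a Laurent unit, $D_{-M}=A^{-1}q^{-M}(A-q^M)(A+q^M)$. This gives \eqref{eq:height-coefficient-divisibility}.

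The only delicate point is that the boundary heights $s=0$ or $t=0$ fit the scheme. The case $s=M$, $t=0$ needs $C_{k,M}(q^M,q)=\delta_{k0}$. This follows either from the first part or directly from $H_{r,M}(q^M,q)=1$ in Proposition~\ref{prop:reflections}, which is the same argument as the $D_{-s}$ factor in Theorem~\ref{thm:row-main}. No step looks like a real obstacle; the work is in the uniqueness of the specialized triangular system.
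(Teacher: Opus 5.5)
Your proposal is correct and follows the paper's proof essentially step for step: you specialize both height expansions at $A=q^M$, use the positive reflection and the nonvanishing specialized diagonal kernels to get triangular uniqueness over $\Q(q)$, then take $M=s+t$ and combine the common $A$-parity with the two nonassociate primes $A\mp q^M$. Your explicit check of the boundary height $0$ (its parity and the expansion of $H_{r,0}=1$) only spells out what the paper leaves implicit.
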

\begin{proof}
Specialize the row expansions for heights $s$ and $M-s$ at $A=q^M$. Their left sides agree for every width by \eqref{eq:positive-reflection}. Their diagonal kernels are
\[
 Z^+_{k,k}(q^M,q)=\prod_{a=k}^{2k-1}\br{M+a}\neq0
\]
for $k>0$, and equal one for $k=0$. Triangular uniqueness over $\Q(q)$ proves \eqref{eq:height-coefficient-reflection}. Apply this with $M=s+t$. The two coefficients have the same $A$-parity, so the difference vanishes at $A=\pm q^M$. The two associated prime factors give \eqref{eq:height-coefficient-divisibility}.
\end{proof}

\begin{proof}[Proof of Theorem~\ref{thm:double-main}]
Fix $k\geq0$ and consider the sequence $s\mapsto C_{k,s}(A,q)$. It has parity $(-1)^k$ in $A$ and satisfies \eqref{eq:height-coefficient-reflection}. Apply Theorem~\ref{thm:abstract} after the substitution $q\mapsto q^{-1}$. Since the symmetric quantum binomial is unchanged by this substitution, the resulting kernel is $Z^-_{s,l}$. Thus there are unique $E_{k,l}\in \R$ such that
\begin{equation}\label{eq:C-from-E}
 C_{k,s}=\sum_{l=0}^s Z^-_{s,l}E_{k,l}.
\end{equation}
Its constant coefficient is $E_{k,0}=C_{k,0}$, and its parity is $(-1)^{k+l}$. When $k=0$, the sequence is constantly one, so $E_{0,l}=0$ for $l>0$. Substitution into the row expansion proves \eqref{eq:double-expansion}. Uniqueness follows by successive triangular uniqueness in the two variables.

Transposition gives $H_{r,s}(A,q)=H_{s,r}(A,q^{-1})$. Inverting $q$ interchanges $Z^+$ and $Z^-$, so uniqueness of the double expansion proves \eqref{eq:double-transpose}.

Finally,
\[
 \qb rk\Big|_{q\mapsto-q}=(-1)^{k(r-k)}\qb rk,
 \qquad
 D_{\pm(r+i)}(A,-q)=(-1)^{r+i}D_{\pm(r+i)}(A,q).
\]
Consequently, with $a_k=k(k+1)/2$,
\begin{equation}\label{eq:kernel-q-parity}
 Z^\pm_{r,k}(A,-q)=(-1)^{a_k}Z^\pm_{r,k}(A,q).
\end{equation}
The left side of \eqref{eq:double-expansion} is even in $q$ by \eqref{eq:q-even}. A final application of uniqueness proves \eqref{eq:double-q-parity}.
\end{proof}

For $k,l>0$, the coefficients can be obtained directly in increasing order of $k+l$:
\begin{equation}\label{eq:double-recursion}
 E_{k,l}=\frac{\displaystyle H_{k,l}
 -\mathop{\sum_{0\leq i\leq k,\ 0\leq j\leq l}}_{(i,j)\neq(k,l)}
 Z^+_{k,i}Z^-_{l,j}E_{i,j}}
 {\displaystyle\left(\prod_{a=k}^{2k-1}D_a\right)
  \left(\prod_{b=l}^{2l-1}D_{-b}\right)}.
\end{equation}
Theorem~\ref{thm:double-main} proves that these are exact divisions in $\R$. The first coefficient is
\begin{equation}\label{eq:E11}
 E_{1,1}=\frac{H_{1,1}-1}{D_1D_{-1}}.
\end{equation}
For example,
\begin{equation}\label{eq:22-expansion}
 \begin{split}
 H_{2,2}=1
 &+(q+q^{-1})^2D_2D_{-2}E_{1,1}\\
 &+(q+q^{-1})D_2D_{-2}D_{-3}E_{1,2}\\
 &+(q+q^{-1})D_2D_3D_{-2}E_{2,1}\\
 &+D_2D_3D_{-2}D_{-3}E_{2,2}.
 \end{split}
\end{equation}
Every identity above is an identity in $\R$. It therefore specializes at any nonzero values of $A$ and $q$, including roots of unity, by evaluating the Laurent-polynomial kernels and coefficients. Triangular uniqueness is asserted over $\R$; diagonal kernels can vanish after specialization.

\subsection{Integral shape transitions}\label{sec:shapes}
Set
\[
 X_t=Aq^{2t}+A^{-1}q^{-2t},\qquad
 \Pi_k=\br{k}!,\qquad
 R_k(X)=\frac{\prod_{i=0}^{k-1}(X-X_i)}{\Pi_k}.
\]
The polynomials $R_k$ belong to $\Q(A,q)[X]$. Their values at $X=X_r$ are the integral kernels $Z^+_{r,k}$.

\begin{lemma}\label{lem:content}
For every partition $\lambda$ and every $r\geq0$,
\begin{equation}\label{eq:content-identity}
 \Phi_{\lambda,r}
 =\frac{\prod_{\square\in\lambda}(X_r-X_{c(\square)})}
 {\prod_{\square\in\lambda}\br{h(\square)}}.
\end{equation}
\end{lemma}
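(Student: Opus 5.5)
We prove \eqref{eq:content-identity} by factoring both sides box by box. The plan is to check two separate factorizations: one for the difference $X_r-X_{c(\square)}$, and one for the Schur specialization $D_{\lambda^T}^{(r)}(q^{-1})$.

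\emph{Step 1: the difference $X_r-X_c$.} For integers $r,c$ we have the elementary identity
\[
 X_r-X_c=Aq^{2r}+A^{-1}q^{-2r}-Aq^{2c}-A^{-1}q^{-2c}
 =(q^{r-c}-q^{c-r})(Aq^{r+c}-A^{-1}q^{-r-c})
 =\br{r-c}\,D_{r+c}.
\]
This is checked by expanding the right side, whose four terms reproduce the left side. Hence
\[
 \prod_{\square\in\lambda}(X_r-X_{c(\square)})
 =\prod_{\square\in\lambda}\br{r-c(\square)}\prod_{\square\in\lambda}D_{r+c(\square)}.
\]
It therefore suffices to prove
\[
 D_{\lambda^T}^{(r)}(q^{-1})=\prod_{\square\in\lambda}\frac{\br{r-c(\square)}}{\br{h(\square)}}.
\]

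\emph{Step 2: the hook-content side.} Transposition sends $\lambda$ to $\lambda^T$. The box $(i,j)\in\lambda$ corresponds to $(j,i)\in\lambda^T$; its content changes sign and its hook length is preserved. Apply the hook-content formula \eqref{eq:small-dimension} with $s=r$ to $\lambda^T$, using that it is invariant under $q\mapsto q^{-1}$. When $\ell(\lambda^T)=\lambda_1\le r$ this gives
\[
 D_{\lambda^T}^{(r)}(q^{-1})=\prod_{\square\in\lambda}\frac{[r-c(\square)]_q}{[h(\square)]_q}.
\]
The factors $\br{1}$ in numerator and denominator cancel, since both products have $|\lambda|$ factors. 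This is exactly the required identity.

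\emph{Step 3: the case $\lambda_1>r$.} By definition the left side of \eqref{eq:content-identity} vanishes, since $D_{\lambda^T}^{(r)}=0$. On the right, the box $(1,r+1)\in\lambda$ has content $r$, so the factor $X_r-X_r=0$ appears and the right side also vanishes. The hook lengths are positive, so the denominator is nonzero and the identity holds in $\Q(A,q)$. The case $\lambda=\varnothing$ is trivial, with both sides equal to one.

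We expect no real obstacle. The only care needed is the bookkeeping of signs and orientation under transposition: the content sign flip must produce $r-c$ rather than $r+c$ in the $q$-numbers. The $q\mapsto q^{-1}$ symmetry of the symmetric $q$-integers, $[a]_{q^{-1}}=[a]_q$, makes the argument $q^{-1}$ in \eqref{eq:shape-kernel} harmless.
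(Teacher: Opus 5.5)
Your proof is correct and follows the same route as the paper's: the transposed hook-content formula gives $D_{\lambda^T}^{(r)}(q^{-1})=\prod_{\square\in\lambda}\br{r-c(\square)}/\br{h(\square)}$, and the factorization $X_r-X_c=\br{r-c}D_{r+c}$ finishes the identity. As in the paper, the case $\lambda_1>r$ is settled by the box of content $r$ in the first row.
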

\begin{proof}
The hook-content formula \cite[Chapter~I, Section~3]{Macdonald} and transposition give
\[
 D_{\lambda^T}^{(r)}(q^{-1})
 =\prod_{\square\in\lambda}
 \frac{\br{r-c(\square)}}{\br{h(\square)}}.
\]
The elementary identity
\[
 X_r-X_c=\br{r-c}D_{r+c}
\]
then proves \eqref{eq:content-identity}. When $r<\lambda_1$, both sides vanish because the first row contains a box of content $r$.
\end{proof}

\begin{theorem}\label{thm:compression}
Let $d=|\lambda|$ and $w=\lambda_1$ for a nonempty partition. There are unique Laurent polynomials $T_{\lambda,k}\in \R$, $w\leq k\leq d$, such that
\begin{equation}\label{eq:compression}
 \Phi_{\lambda,r}=\sum_{k=w}^dT_{\lambda,k}Z^+_{r,k}
 \qquad(r\geq0).
\end{equation}
They have $A$-parity $(-1)^{d-k}$. Their boundary values are
\begin{align}
 T_{\lambda,d}
 &=\frac{\Pi_d}{\prod_{\square\in\lambda}\br{h(\square)}},\label{eq:top-transition}\\
 T_{\lambda,w}
 &=D_{\lambda^T}^{(w)}(q^{-1})
   \prod_{\substack{(i,j)\in\lambda\\i\geq2}}D_{w+j-i}.
 \label{eq:bottom-transition}
\end{align}
\end{theorem}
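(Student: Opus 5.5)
The plan is to apply the width interpolation argument of Theorem~\ref{thm:abstract} to the sequence $F_r=\Phi_{\lambda,r}$ and then bound the support of the coefficients. By Lemma~\ref{lem:content}, $\Phi_{\lambda,r}$ is the value at $X=X_r$ of the polynomial
\[
P_\lambda(X)=\prod_{\square\in\lambda}(X-X_{c(\square)})\Big/\prod_{\square\in\lambda}\br{h(\square)}\in\Q(A,q)[X],
\]
which has degree $d$. The $R_k$ form a Newton basis of $\Q(A,q)[X]$ with nodes $X_0,X_1,\dots$. So $P_\lambda=\sum_{k=0}^d T_{\lambda,k}R_k$ for unique rational coefficients, and evaluating at $X_r$ gives \eqref{eq:compression} with $T_{\lambda,k}$ supported in $k\le d$. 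Comparing leading coefficients in $X$ gives \eqref{eq:top-transition} at once. Uniqueness among sequences follows from the triangularity already used, since $Z^+_{k,k}\ne0$.

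For the lower end of the support, I would use vanishing. The contents of the first row are $0,\dots,w-1$, so $P_\lambda(X_r)=0$ for $0\le r<w$. Triangular recursion on $r=0,1,\dots,w-1$ then forces $T_{\lambda,k}=0$ for $k<w$.

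Integrality is the main obstacle. I would verify the hypotheses of Theorem~\ref{thm:abstract} for $F_r=\Phi_{\lambda,r}$, which lies in $\R$ for each $r$.

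\emph{Parity.} Each factor $D_{r+c}$ is odd in $A$ and the dimension factor does not involve $A$, so $\eps=(-1)^d$. This gives the parity $(-1)^{d-k}$ of $T_{\lambda,k}$.

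\emph{Reflection.} We need $\Phi_{\lambda,r}(q^{-M},q)=\Phi_{\lambda,M-r}(q^{-M},q)$ for $0\le r\le M$. At $A=q^{-M}$,
\[
X_r=q^{2r-M}+q^{M-2r},
\]
which is invariant under $r\mapsto M-r$. Hence $P_\lambda(X_r)=P_\lambda(X_{M-r})$ at this specialization. The point to check is that this evaluation is compatible with the Laurent specialization of $\Phi_{\lambda,r}$. This holds because $\Phi_{\lambda,r}$ equals $P_\lambda(X_r)$ as a rational function whose denominator $\prod\br{h}$ is free of $A$. Both sides are therefore genuine specializations of the same polynomial expression in $X$, and the reflection holds.

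With these hypotheses, Theorem~\ref{thm:abstract} produces coefficients in $\R$. They coincide with the $T_{\lambda,k}$ by uniqueness, so the $T_{\lambda,k}$ lie in $\R$.

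Finally, \eqref{eq:bottom-transition} comes from evaluating \eqref{eq:compression} at $r=w$. All terms with $k>w$ vanish, so
\[
T_{\lambda,w}=\Phi_{\lambda,w}/Z^+_{w,w}.
\]
The product $\prod_{\square}D_{w+c(\square)}$ over the first row, with contents $0,\dots,w-1$, is exactly $Z^+_{w,w}$, because $\qb ww=1$. What remains is $D_{\lambda^T}^{(w)}(q^{-1})\prod_{i\ge2}D_{w+j-i}$, as claimed.
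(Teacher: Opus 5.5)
Your proposal is correct and follows essentially the same route as the paper. You expand the hook-content polynomial in the Newton basis $R_k$, obtain integrality and parity from Theorem~\ref{thm:abstract} via $X_r=X_{M-r}$ at $A=q^{-M}$, read off $T_{\lambda,d}$ from leading coefficients, and get $T_{\lambda,w}$ by evaluating at $r=w$. The only cosmetic difference is that you bound the support below by the vanishing $P_\lambda(X_r)=0$ for $r<w$ and triangular recursion, whereas the paper uses divisibility of $P_\lambda$ by $\prod_{i<w}(X-X_i)$.
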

\begin{proof}
The right side of \eqref{eq:content-identity}, with $X_r$ replaced by $X$, is a polynomial of degree $d$ over $\Q(A,q)$. The polynomials $R_0,\ldots,R_d$ form a basis of that polynomial space. Since the first row of $\lambda$ has contents $0,\ldots,w-1$, its expansion contains only $R_w,\ldots,R_d$. This gives unique rational coefficients $T_{\lambda,k}$.

For every $r\geq0$, the value $\Phi_{\lambda,r}$ is in $\R$: its principal specialization is a Schur polynomial evaluated at Laurent monomials, and the remaining factors are in $\R$. Its $A$-parity is $(-1)^d$. At $A=q^{-M}$ one has $X_r=X_{M-r}$, so the sequence $r\mapsto\Phi_{\lambda,r}$ satisfies the reflection hypothesis of Theorem~\ref{thm:abstract}. That theorem gives integral interpolation coefficients with the stated parity. By triangular uniqueness, they agree with the finite rational expansion already constructed. This proves integrality.

Comparison of leading coefficients gives \eqref{eq:top-transition}. Evaluating \eqref{eq:compression} at $r=w$ leaves only its $k=w$ term. Cancelling the factors of $Z^+_{w,w}$ corresponding to the first row of $\lambda$ proves \eqref{eq:bottom-transition}.
\end{proof}

\begin{example}\label{ex:shape-ambiguity}
The first nontrivial relation is
\begin{equation}\label{eq:11-relation}
 \Phi_{(1,1),r}=Z^+_{r,2}+D_0Z^+_{r,1}.
\end{equation}
Indeed, the two numerators for $\Phi_{(1,1),r}$ and $Z^+_{r,2}$ differ by $(X_r-X_0)(X_1-X_{-1})$, and $X_1-X_{-1}=\br{2}D_0$.
\end{example}

\begin{theorem}\label{thm:all-solutions}
Fix a knot $K$ and $s\geq1$. For each multirow partition $\lambda$ with $2\leq\ell(\lambda)\leq s$, choose an arbitrary $a_{\lambda,s}\in \R$. Define
\begin{align}
 \widetilde{\mathcal C}_{\lambda,s}&=a_{\lambda,s}
 &&(2\leq\ell(\lambda)\leq s),\label{eq:shape-free}\\
 \widetilde{\mathcal C}_{(k),s}
 &=C_{k,s}-
 \sum_{\substack{2\leq\ell(\lambda)\leq s\\
                 \lambda_1\leq k\leq|\lambda|}}
 T_{\lambda,k}a_{\lambda,s}
 &&(k\geq1),\label{eq:shape-row-correction}
\end{align}
with $\widetilde{\mathcal C}_{\varnothing,s}=1$ and $\widetilde{\mathcal C}_{\lambda,s}=0$ for $\ell(\lambda)>s$. The sums in \eqref{eq:shape-row-correction} are finite. These formulas give all Laurent-polynomial coefficient families satisfying \eqref{eq:KNTZ-general} and the stated empty-shape and height conditions.
\end{theorem}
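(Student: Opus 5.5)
The plan is to reduce every partition-indexed expansion to a row expansion by means of the shape transitions of Theorem~\ref{thm:compression}, and then to apply the uniqueness part of Theorem~\ref{thm:row-main}. Fix $K$ and $s\geq1$. Let $(\widetilde{\mathcal C}_{\lambda,s})$ be any family in $\R$ with $\widetilde{\mathcal C}_{\varnothing,s}=1$ and $\widetilde{\mathcal C}_{\lambda,s}=0$ for $\ell(\lambda)>s$. Because these conditions and the coefficients themselves do not depend on $r$, the family is indexed by all partitions of length at most $s$. Suppose it satisfies \eqref{eq:KNTZ-general} for every $r\geq1$.

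The first step is to rewrite the right side of \eqref{eq:KNTZ-general}. Since $\Phi_{\lambda,r}=0$ when $\lambda_1>r$, the sum over $\lambda\subseteq(r^s)$ can be replaced by the sum over all $\lambda$ with $\ell(\lambda)\leq s$. For each fixed $r$ only finitely many terms survive. Substituting \eqref{eq:compression} for the multirow shapes, and using $\Phi_{(k),r}=Z^+_{r,k}$ for rows, gives
\[
 H_{r,s}=1+\sum_{k\geq1}Z^+_{r,k}\Bigl(\widetilde{\mathcal C}_{(k),s}+\sum_{\substack{2\leq\ell(\lambda)\leq s\\ \lambda_1\leq k\leq|\lambda|}}T_{\lambda,k}\widetilde{\mathcal C}_{\lambda,s}\Bigr).
\]
The inner sum is finite: $|\lambda|\geq k\geq\lambda_1$ forces $\lambda\subseteq(k^s)$, a finite set. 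This observation also proves the finiteness claim in \eqref{eq:shape-row-correction}. The rearrangement is legitimate because $Z^+_{r,k}=0$ for $k>r$, so each identity involves only finitely many terms.

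The bracketed coefficients lie in $\R$ by Theorem~\ref{thm:compression}, and they do not depend on $r$. The $k=0$ coefficient is $1$. By the uniqueness in Theorem~\ref{thm:row-main}, each bracket therefore equals $C_{k,s}$. Taking $a_{\lambda,s}=\widetilde{\mathcal C}_{\lambda,s}$ for the multirow shapes, this equation is exactly \eqref{eq:shape-row-correction}. Hence every solution has the stated form.

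Conversely, take arbitrary $a_{\lambda,s}\in\R$ and define $\widetilde{\mathcal C}$ by \eqref{eq:shape-free} and \eqref{eq:shape-row-correction}. The row coefficients are finite $\R$-combinations, so they lie in $\R$, and the empty-shape and height conditions hold by construction. Running the computation above backwards with \eqref{eq:row-expansion} gives \eqref{eq:KNTZ-general} for all $r\geq1$.

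There is no real obstacle. The only care needed is in the bookkeeping of index ranges: the vanishing of $\Phi_{\lambda,r}$ for $\lambda_1>r$ and of $Z^+_{r,k}$ for $k>r$ is what makes the sum over $\lambda\subseteq(r^s)$ match the sum over $k\leq r$ at each width.
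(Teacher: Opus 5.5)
Your proposal is correct and follows essentially the same route as the paper. You expand each multirow kernel into row kernels via Theorem~\ref{thm:compression}, use containment in $(k^s)$ for finiteness and the vanishing of $Z^+_{r,k}$ for $k>r$ to match index ranges, invoke the uniqueness in Theorem~\ref{thm:row-main} to identify the collected row coefficients with $C_{k,s}$, and obtain the converse by reading the same computation backwards.
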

\begin{proof}
For fixed $k$ and $s$, every partition contributing to \eqref{eq:shape-row-correction} is contained in $(k^s)$, so the sum is finite. For fixed external width $r$, only partitions inside $(r^s)$ contribute to \eqref{eq:KNTZ-general}. Apply Theorem~\ref{thm:compression} to each such partition. The coefficient of $Z^+_{r,k}$, for $k\leq r$, is
\[
 \widetilde{\mathcal C}_{(k),s}
 +\sum_{\substack{2\leq\ell(\lambda)\leq s\\
                 \lambda_1\leq k\leq|\lambda|}}
 T_{\lambda,k}a_{\lambda,s}=C_{k,s}.
\]
Every partition in this sum is already contained in $(r^s)$. Terms with $k>r$ vanish. Thus the shape sum equals the row expansion.

Conversely, start with any Laurent coefficient family satisfying \eqref{eq:KNTZ-general}, and take its multirow entries as $a_{\lambda,s}$. Compression gives a row expansion with coefficients equal to the left side of the preceding display. Uniqueness in Theorem~\ref{thm:row-main} forces those coefficients to be $C_{k,s}$, which gives \eqref{eq:shape-row-correction}.
\end{proof}

For example, when $s\geq2$, relation \eqref{eq:11-relation} allows the simultaneous replacements
\[
 \mathcal C_{(1,1),s}\longmapsto\mathcal C_{(1,1),s}+f,\qquad
 \mathcal C_{(2),s}\longmapsto\mathcal C_{(2),s}-f,\qquad
 \mathcal C_{(1),s}\longmapsto\mathcal C_{(1),s}-D_0f
\]
for any $f\in \R$. The rectangular invariants are unchanged. The coefficients $E_{k,l}$ give a unique expansion in the two
rectangular dimensions. The coefficients selected by the double-twist
formula are related to this expansion in
Theorem~\ref{thm:coefficient-bridge}.

\section{Four-point Casimirs and exterior powers}\label{sec:classical}
\subsection{The polynomial model}\label{sec:space}
Fix $N\geq2s$, set $V=\C^N$, and let $R=\mathbb S_{(r^s)}V$. Consider
\begin{equation}\label{eq:invariant-space}
 \cV_{r,s,N}=\Inv_{\gl_N}(R\otimes R^*\otimes R\otimes R^*).
\end{equation}
Introduce $N\times s$ matrices of variables $X_1,X_2$ and $s\times N$ matrices $Y_1,Y_2$. The variables are representation generators: the $X$ variables transform as copies of $V$, and the $Y$ variables as copies of $V^*$. The Cauchy decomposition \cite[Chapter I, Sections 3--4 and Appendix A]{Macdonald}
\[
 \Sym(V\otimes\C^s)=\bigoplus_\lambda
 \mathbb S_\lambda V\otimes\mathbb S_\lambda\C^s
\]
shows that the $\det^r$ component for the auxiliary $\GL_s$ acting on $X_a$ is $R$. The corresponding component for $Y_a$ is $R^*$. Hence \eqref{eq:invariant-space} is the space of $\GL_N$-invariant polynomials carrying the determinant character $\det^r$ at each of the four auxiliary factors.

Write
\begin{equation}\label{eq:crossratio}
 M_{ab}=Y_aX_b,\qquad
 T=M_{11}^{-1}M_{12}M_{22}^{-1}M_{21}.
\end{equation}
The invariant ring has a direct description in these coordinates.

\begin{lemma}\label{lem:contractions}
For $N\geq2s$, the $\GL_N$-invariant polynomial ring in $X_1,X_2,Y_1,Y_2$ is the polynomial ring in the entries of the four matrices $M_{ab}$.
\end{lemma}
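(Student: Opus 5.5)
This is the First Fundamental Theorem of invariant theory for $\GL_N$, together with the fact that for $N\geq2s$ there are no relations among the generating contractions. The plan therefore has two parts: generation and algebraic independence.

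\emph{Generation.} Group the variables as $N$-vectors. The columns of $X_1$ and $X_2$ give $2s$ vectors in $V$. The rows of $Y_1$ and $Y_2$ give $2s$ covectors in $V^*$. By the classical First Fundamental Theorem for $\GL_N$ acting on $p$ copies of $V$ and $q$ copies of $V^*$ (Weyl; see also Procesi), the invariant polynomial ring is generated by the pairings $\langle y,x\rangle$ of a covector with a vector. Here these pairings are exactly the entries of the four $s\times s$ matrices $M_{ab}=Y_aX_b$. So the ring of invariants is the image of the polynomial ring in $4s^2$ variables $m_{ab,ij}$ under the map $m\mapsto M$.

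\emph{Independence.} By the Second Fundamental Theorem, the kernel of this map is generated by the $(N+1)\times(N+1)$ minors of the $2s\times2s$ matrix
\[
\begin{pmatrix}M_{11}&M_{12}\\ M_{21}&M_{22}\end{pmatrix}
=\begin{pmatrix}Y_1\\ Y_2\end{pmatrix}\begin{pmatrix}X_1&X_2\end{pmatrix}.
\]
Since $N\geq2s$, no such minors exist, and the kernel is zero.

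One can also avoid the Second Fundamental Theorem and argue directly that the map is dominant. Stack the variables as $X=(X_1\ X_2)$, an $N\times2s$ matrix, and $Y$, a $2s\times N$ matrix. The product map $(X,Y)\mapsto YX$ into $2s\times2s$ matrices is surjective when $N\geq2s$. Indeed, take $X=\binom{I_{2s}}{0}$ and $Y=(P\ \ 0)$ for an arbitrary $P$. Hence no nonzero polynomial in the entries of $YX$ vanishes identically, and the $4s^2$ entries are algebraically independent.

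The main point needing care is the generation statement. If one prefers not to cite the First Fundamental Theorem as a black box, the standard route is the Capelli identity, or polarization together with the $\GL_N$ Schur--Weyl duality. I would simply cite Weyl's theorem, since the problem is entirely classical in characteristic zero.
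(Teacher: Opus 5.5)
Your proof is correct. Its independence step is the same as the paper's: with $X=\binom{I_{2s}}{0}$ and $Y=(P\ 0)$, every $2s\times 2s$ matrix occurs as $YX$. The SFT alternative you give for independence is also valid but not needed.

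The routes differ in generation. You cite Weyl's First Fundamental Theorem as a black box. The paper instead proves the required case directly with a slice argument. On the dense open set where the stacked $N\times 2s$ matrix $X$ has full rank and $M=YX$ is invertible, a change of basis puts $X$ in the form $\binom{I_{2s}}{0}$. Writing $Y=(M\ Z)$, the stabilizer element $\begin{pmatrix}I&B\\0&C\end{pmatrix}$ with $B=M^{-1}Z$ clears $Z$. Hence every invariant $f$ agrees there with $f\bigl(\binom{I}{0},(YX\ 0)\bigr)$, which is a polynomial in the entries of $YX$, and the equality extends everywhere by density.

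Your route is shorter once the classical theorem is granted. It also shows that the hypothesis $N\geq 2s$ is needed only for algebraic independence, since generation by pairings holds in every rank. The paper's route is self-contained and avoids Capelli identities and Schur--Weyl duality. Its cost is that it uses $N\geq 2s$ already in the generation step, to obtain a full-rank chart for $X$.
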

\begin{proof}
Let $k=2s$, combine the $X$ matrices into an $N\times k$ matrix $X$, and combine the $Y$ matrices into a $k\times N$ matrix $Y$. On the full-rank chart for $X$, a change of basis puts $X$ in the form $\binom{I_k}{0}$. Write $Y=(M\ Z)$. The stabilizer of this $X$ consists of block matrices
\[
 g=\begin{pmatrix}I_k&B\\0&C\end{pmatrix},
\]
which act on $Y$ by
\[
 (M\ Z)g^{-1}=(M\ (Z-MB)C^{-1}).
\]
On the dense locus where $M$ is invertible, choose $B=M^{-1}Z$ to set $Z=0$. An invariant polynomial therefore equals its restriction at $X=\binom{I_k}{0}$, $Y=(M\ 0)$, which is a polynomial in $M=YX$. The equality holds on a dense open set and hence everywhere. Conversely, all entries of $YX$ are invariant, and every $k\times k$ matrix occurs as $YX$ in this chart. They are therefore algebraically independent. When $N=k$, the same argument applies with the complementary blocks omitted.
\end{proof}

\begin{proposition}\label{prop:polynomial-model}
There is a linear isomorphism
\begin{equation}\label{eq:polynomial-model}
 \cV_{r,s,N}\simeq
 \{F\in\C[t_1,\ldots,t_s]^{\mathfrak S_s}:\deg_{t_i}F\leq r\text{ for all }i\},
\end{equation}
under which
\begin{equation}\label{eq:prefactor}
 f(X,Y)=(\det M_{11}\det M_{22})^rF(T).
\end{equation}
Here $F(T)$ is the conjugation-invariant polynomial whose eigenvalue expression is $F(t_1,\ldots,t_s)$.
\end{proposition}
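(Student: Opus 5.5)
By Lemma~\ref{lem:contractions}, an element of $\cV_{r,s,N}$ is a polynomial $f$ in the entries of the four $s\times s$ matrices $M_{ab}$. The four auxiliary groups $\GL_s^{(X_1)},\GL_s^{(X_2)},\GL_s^{(Y_1)},\GL_s^{(Y_2)}$ act on these entries by $M_{ab}\mapsto h_a M_{ab} g_b^{-1}$, with appropriate transposition conventions. The space $\cV_{r,s,N}$ consists of those polynomials that transform by $\det^r$ under each of the four factors. The plan is to first find all rational functions of the $M_{ab}$ with these four equivariances, and then impose polynomiality.

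\emph{Step 1: the rational classification.} Work on the dense open set where $M_{11}$ and $M_{22}$ are invertible. Use $(h_1,g_1)$ to normalize $M_{11}=I$ and $(h_2,g_2)$ to normalize $M_{22}=I$. The residual symmetry is the diagonal conjugation $h_1=g_1$, $h_2=g_2$ subject to the determinant constraints, acting by $M_{12}\mapsto h_1M_{12}h_2^{-1}$ and $M_{21}\mapsto h_2M_{21}h_1^{-1}$. Dividing $f$ by $(\det M_{11}\det M_{22})^r$ gives a function invariant under all four groups. After normalization it is a function of $(M_{12},M_{21})$ invariant under $(h_1,h_2)\in\GL_s\times\GL_s$. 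Using $h_2$ to set $M_{21}=I$ on its invertibility locus leaves conjugation-invariant functions of $M_{12}M_{21}=T$. Hence $f=(\det M_{11}\det M_{22})^rF(T)$ with $F$ a rational class function, which is determined by its symmetric eigenvalue expression. Conversely, for any symmetric polynomial $F$, the expression \eqref{eq:prefactor} has the right equivariance, because $T$ transforms by conjugation under $h_1$ and is invariant under the other factors.

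\emph{Step 2: polynomiality.} I must show that $(\det M_{11}\det M_{22})^rF(T)$ is a polynomial in the $M_{ab}$ exactly when $\deg_{t_i}F\le r$; this is the main obstacle. For sufficiency, expand $F$ in the basis $e_1^{a_1}\cdots$ or, better, write $F$ as a combination of products $\prod_i t_i^{k_i}$ symmetrized with each $k_i\le r$. A cleaner route is to note that a symmetric polynomial of degree $\le r$ in each variable is a linear combination of products $e_{j_1}(t)\cdots e_{j_r}(t)$ with $r$ factors, where $e_0=1$. Each $e_j(T)$ is a coefficient of $\det(I+uT)$, and
\[
 \det M_{11}\det M_{22}\det(I+uT)=\det(M_{11}M_{22}\cdot\text{adj-free form})
\]
should be checked to be polynomial. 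Indeed $\det M_{11}\det M_{22}\det(I+uM_{11}^{-1}M_{12}M_{22}^{-1}M_{21})$ equals the determinant of the $2s\times2s$ block matrix $\begin{pmatrix}M_{11}&-uM_{12}\\M_{21}&M_{22}\end{pmatrix}$ by the Schur complement. This is polynomial, so each of the $r$ factors absorbs one copy of the prefactor. For necessity, restrict to $M_{11}=\diag(x_i)$, $M_{22}=I$, $M_{12}=I$, $M_{21}=I$. Then $T=\diag(x_i^{-1})$ and $f=\prod x_i^r F(x^{-1})$, which is polynomial only if $\deg_{t_i}F\le r$.

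\emph{Step 3: conclusion.} The map $F\mapsto f$ is injective, since $T$ attains all diagonalizable values. Combining Steps 1 and 2 gives the isomorphism \eqref{eq:polynomial-model}. A dimension check against the Littlewood--Richardson count of $\Inv(R\otimes R^*\otimes R\otimes R^*)$, which is the number of $\lambda\subseteq(r^s)$, can serve as a consistency test.
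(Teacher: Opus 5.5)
Your proof is correct, and its frame matches the paper's. The paper also normalizes three of the four blocks to $I_s$ on an open chart (it uses $M_{12}$ where you use $M_{21}$), identifies $F$ as a conjugation invariant of $T$, proves the degree bound by the same kind of diagonal restriction, and gets injectivity from the chart.

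The genuine difference is in sufficiency. The paper works with the Schur basis $s_\lambda$, $\lambda\subseteq(r^s)$. It realizes each basis element as $(\det M_{11}\det M_{22})^r\Tr\bigl(\mathbb S_\lambda(M_{11}^{-1})\mathbb S_\lambda(M_{12})\mathbb S_\lambda(M_{22}^{-1})\mathbb S_\lambda(M_{21})\bigr)$. This needs the representation-theoretic input that $(\det M)^r\mathbb S_\lambda(M^{-1})$ is polynomial when $\lambda_1\leq r$. You instead use the $r$-fold products $e_{j_1}\cdots e_{j_r}$ together with the Schur-complement identity
\[
 \det M_{11}\det M_{22}\det(I+uT)=\det\begin{pmatrix}M_{11}&-uM_{12}\\M_{21}&M_{22}\end{pmatrix}.
\]
Each factor $\det M_{11}\det M_{22}\,e_j(T)$ is then a coefficient of a polynomial determinant. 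Your route is more elementary, needs no Schur functors, and produces explicit polynomial generators. The paper's route attaches a concrete invariant to each Schur function, which fits its later indexing by $\lambda$ and $I(\lambda)$.

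Your basis claim needs one line of justification. The products are indexed by multisets of size $r$ from $\{0,\ldots,s\}$, so there are $\binom{r+s}{s}$ of them. They are distinct monomials in the algebraically independent $e_1,\ldots,e_s$, hence linearly independent. Each has degree at most $r$ in every $t_i$, and the target space has dimension $\binom{r+s}{s}$.

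One step needs tightening. In Step 1 you only conclude that $F$ is a \emph{rational} class function. Your diagonal test in Step 2 cannot rule out denominators. For example, $F=(t_1\cdots t_s)^{-1}$ gives $\prod_i x_i^{r+1}$ at your test point, yet $(\det M_{11}\det M_{22})^r(\det T)^{-1}$ is not polynomial. The fix is already in your normalization. At $M_{11}=M_{22}=M_{21}=I$ one has $T=M_{12}$, so $F(T)$ is a restriction of the polynomial $f$. Hence $F$ is a polynomial class function, i.e.\ a symmetric polynomial in the eigenvalues, and only then does the degree test apply. The paper states exactly this before its degree argument.
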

\begin{proof}
The auxiliary actions send $M_{ab}$ to $g_aM_{ab}h_b$. On the chart
where $M_{11}$, $M_{22}$, and $M_{12}$ are invertible, these actions set
all three matrices to $I_s$. The residual action conjugates the remaining
matrix. Tracking the determinant character gives the prefactor in
\eqref{eq:prefactor}. Evaluation with the first three matrices equal to
$I_s$ shows that $F$ is a polynomial conjugation invariant, hence a
symmetric polynomial in the eigenvalues.

Set $M_{11}=\diag(u_1,\ldots,u_s)$, $M_{22}=M_{12}=I_s$, and $M_{21}=\diag(v_1,\ldots,v_s)$. Polynomiality of $f$ gives polynomiality of
\[
 \prod_i u_i^r F(v_1/u_1,\ldots,v_s/u_s).
\]
Distinct monomials in the $v_i$ cannot cancel. Therefore the exponent of each variable in $F$ is at most $r$.

Conversely, Schur polynomials $s_\lambda(t)$ with $\lambda\subseteq(r^s)$ form a basis of the space on the right of \eqref{eq:polynomial-model}. For such a $\lambda$, every entry of
\[
 (\det M)^r\mathbb S_\lambda(M^{-1})
\]
is polynomial in $M$: after transposition this is the polynomial representation with highest weight $(r-\lambda_s,\ldots,r-\lambda_1)$. Apply this observation to both inverse matrices in
\[
 (\det M_{11}\det M_{22})^r
 \Tr\!\left(\mathbb S_\lambda(M_{11}^{-1})
 \mathbb S_\lambda(M_{12})\mathbb S_\lambda(M_{22}^{-1})
 \mathbb S_\lambda(M_{21})\right).
\]
The expression is polynomial and has the required four determinant characters. It is \eqref{eq:prefactor} for $F=s_\lambda$. This proves surjectivity. Restriction to the open chart proves injectivity.
\end{proof}
In particular, $\dim\cV_{r,s,N}=\binom{r+s}{s}$. The vector $F=1$ is the invariant obtained by pairing factors $(1,2)$ and $(3,4)$ separately.

\subsection{The adjacent Casimir operators}\label{sec:casimirs}
On the first pair of factors, the infinitesimal action is
\begin{equation}\label{eq:generators}
 E_{ij}=\sum_{\alpha=1}^sX_{1,i\alpha}\frac{\partial}{\partial X_{1,j\alpha}}
 -\sum_{\alpha=1}^sY_{1,\alpha j}\frac{\partial}{\partial Y_{1,\alpha i}}.
\end{equation}
The minus sign is the dual action on the second factor. Let
\[
 C_{12}=\sum_{i,j=1}^N E_{ij}E_{ji}.
\]
Each $E_{ij}$ annihilates $M_{11}$ and $M_{22}$, so the prefactor in \eqref{eq:prefactor} is unaffected.

\begin{lemma}\label{lem:matrix-casimir}
On the cross-ratio chart,
\begin{align}
 C_{12}T_{ab}&=2NT_{ab}-2s\delta_{ab},\label{eq:drift}\\
 \sum_{i,j}(E_{ij}T_{ab})(E_{ji}T_{cd})
 &=2T_{ad}T_{cb}-\delta_{ad}T_{cb}-T_{ad}\delta_{cb}.\label{eq:covariance}
\end{align}
\end{lemma}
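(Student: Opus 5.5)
The plan is a direct computation with the first-order differential operators $E_{ij}$. We treat them as derivations of the ring of rational functions on the chart where $M_{11}$, $M_{22}$ (and $M_{12}$) are invertible, and reduce all index contractions over $V=\C^N$ to matrix products $Y_aX_b=M_{ab}$.

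First, record the action of $E_{ij}$ on the four blocks. Let $e_{ij}$ be the elementary $N\times N$ matrix. By \eqref{eq:generators}, $E_{ij}X_1=e_{ij}X_1$ and $E_{ij}Y_1=-Y_1e_{ij}$, while $X_2$ and $Y_2$ are killed. Hence
\[
 E_{ij}M_{11}=0,\qquad E_{ij}M_{22}=0,\qquad
 E_{ij}M_{12}=-Y_1e_{ij}X_2,\qquad E_{ij}M_{21}=Y_2e_{ij}X_1 .
\]
Since $M_{11}^{-1}$ and $M_{22}^{-1}$ are constants for $E_{ij}$, the Leibniz rule gives
\[
 E_{ij}T=-P_1e_{ij}Q_1+P_2e_{ij}Q_2,
\]
with $s\times N$ and $N\times s$ matrices
\[
 P_1=M_{11}^{-1}Y_1,\quad Q_1=X_2M_{22}^{-1}M_{21},\quad
 P_2=M_{11}^{-1}M_{12}M_{22}^{-1}Y_2,\quad Q_2=X_1 .
\]

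For \eqref{eq:covariance}, use the contraction rule
\[
 \sum_{i,j}(u\,e_{ij}v)_{ab}(u'e_{ji}v')_{cd}=(uv')_{ad}(u'v)_{cb}.
\]
Expanding the product of the two expressions for $E_{ij}T_{ab}$ and $E_{ji}T_{cd}$ gives four terms. Each factor $P_\alpha Q_\beta$ collapses to a product of $M$'s:
\[
 P_1Q_1=T,\qquad P_2Q_2=T,\qquad P_1Q_2=I_s,
\]
\[
 P_2Q_1=M_{11}^{-1}M_{12}M_{22}^{-1}M_{22}M_{22}^{-1}M_{21}=T .
\]
The two diagonal terms give $2T_{ad}T_{cb}$, and the cross terms carry a minus sign and one identity factor each, producing $-\delta_{ad}T_{cb}-T_{ad}\delta_{cb}$. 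It remains to check which cross term yields which $\delta$; this is bookkeeping.

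For \eqref{eq:drift}, write $C_{12}T=\sum_{i,j}E_{ij}(E_{ji}T)$ and differentiate the expression $-P_1e_{ji}Q_1+P_2e_{ji}Q_2$ once more. Only $Y_1$ (inside $P_1$), $M_{21}$ (inside $Q_1$), $M_{12}$ (inside $P_2$), and $X_1=Q_2$ are affected. The contributions then split into two types.

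1. In the terms $Y_1e_{ij}e_{ji}$ and $e_{ij}e_{ji}X_1$, the adjacent elementary matrices contract via $\sum_{i,j}e_{ij}e_{ji}=N\cdot I_N$. These give $N(P_1Q_1)+N(P_2Q_2)=2NT$, with signs to be tracked.
2. In the terms where the second derivative hits the other block ($M_{21}$ in $Q_1$, or $M_{12}$ in $P_2$), one gets $e_{ij}$ separated from $e_{ji}$ by a matrix. The relevant rule is $\sum_{i,j}e_{ij}We_{ji}=\Tr(W)\,I_N$ with $W$ an $N\times N$ matrix. Here $W=X_1M_{11}^{-1}Y_1$ or $X_2M_{22}^{-1}Y_2$, whose traces are $\Tr(M_{11}^{-1}M_{11})=s$ and likewise $s$. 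Each such term is then $-s\,P_1Q_2$ or $-s\,P_2Q_1$-type; these collapse to $-s\,I_s$ terms, giving the $-2s\delta_{ab}$, and any residual $T$-terms must cancel.

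The main obstacle is this second-order sign and placement bookkeeping: verifying that the cross contributions produce exactly $-2s\,\delta_{ab}$ and no leftover multiples of $T$. I would organize it by writing each term as $u\,e_{ij}\,w\,e_{ji}\,v$ or $u\,e_{ij}e_{ji}\,v$ and applying the two contraction rules uniformly. As a sanity check I would test the case $s=1$, where $T$ is the scalar cross-ratio and the formula reduces to the familiar hypergeometric-type Casimir action.
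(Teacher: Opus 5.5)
Your proposal is correct and follows the paper's proof. Your $P_1,Q_1,P_2,Q_2$ are the paper's $u_j,a_i,v_j,b_i$, up to the harmless swap $e_{ij}\leftrightarrow e_{ji}$ (with \eqref{eq:generators} one actually has $E_{ij}X_1=e_{ji}X_1$), the four contractions coincide, and the drift rests on the same two facts, $\sum_{i,j}e_{ij}e_{ji}=N I_N$ and $\sum_{i,j}e_{ij}We_{ji}=\Tr(W)I_N$.

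One correction to your step 2: both cross terms (from $E_{ij}$ hitting $M_{12}$ inside $P_2$ and $M_{21}$ inside $Q_1$) involve $W=X_2M_{22}^{-1}Y_2$ and sum to $-s\,P_1Q_2=-s\,I_s$ each. This is the paper's mixed contraction $-2sM_{11}$ multiplied by $M_{11}^{-1}$, so no $P_2Q_1$ term and no residual multiple of $T$ arises.
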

\begin{proof}
Use the column and row vectors
\[
 u_j=M_{11}^{-1}Y_{1,:,j},\quad
 v_j=M_{11}^{-1}M_{12}M_{22}^{-1}Y_{2,:,j},\quad
 a_i=X_{2,i,:}M_{22}^{-1}M_{21},\quad b_i=X_{1,i,:}.
\]
Differentiation of \eqref{eq:crossratio} gives
\[
 E_{ij}T=-u_ja_i+v_jb_i.
\]
The contractions
\[
 \sum_i u_i a_i=T,\qquad \sum_i v_i b_i=T,\qquad
 \sum_i u_i b_i=I_s,\qquad \sum_i v_i a_i=T
\]
give \eqref{eq:covariance} after expansion.
For the first-order term, $C_{12}M_{12}=NM_{12}$ and $C_{12}M_{21}=NM_{21}$. In the product rule, the mixed contraction is
\[
 -2\sum_iY_{1,:,i}X_{1,i,:}
       \sum_j X_{2,j,:}M_{22}^{-1}Y_{2,:,j}
 =-2sM_{11},
\]
because the second sum is $\Tr(M_{22}^{-1}Y_2X_2)=s$. Multiplication by $M_{11}^{-1}$ gives \eqref{eq:drift}.
\end{proof}

\begin{proposition}\label{prop:radial}
In the polynomial model \eqref{eq:polynomial-model}, the operator $C_{12}/2$ is
\begin{equation}\label{eq:radial}
 D_s=\sum_i t_i(t_i-1)\partial_i^2+
 \sum_i\left((N-2s+2)t_i-1+
 2t_i(t_i-1)\sum_{j\ne i}\frac1{t_i-t_j}\right)\partial_i.
\end{equation}
Define
\begin{equation}\label{eq:inversion}
 (P_sF)(t_1,\ldots,t_s)=\prod_i t_i^r F(t_1^{-1},\ldots,t_s^{-1}).
\end{equation}
Then the Casimir of factors $(2,3)$ satisfies
\begin{equation}\label{eq:second-radial}
 C_{23}/2=P_sD_sP_s.
\end{equation}
\end{proposition}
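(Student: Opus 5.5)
The plan is to compute $C_{12}$ on functions of the form $f=(\det M_{11}\det M_{22})^rF(T)$ by the chain rule, using Lemma~\ref{lem:matrix-casimir}. Since each $E_{ij}$ annihilates the prefactor, $C_{12}f$ equals the prefactor times $C_{12}$ applied to $F(T)$. Writing $F(T)$ as a function of the matrix entries $T_{ab}$, the chain rule gives
\[
 C_{12}F(T)=\sum_{a,b}(C_{12}T_{ab})\frac{\partial F}{\partial T_{ab}}
 +\sum_{a,b,c,d}\Big(\sum_{i,j}(E_{ij}T_{ab})(E_{ji}T_{cd})\Big)
 \frac{\partial^2F}{\partial T_{ab}\partial T_{cd}}.
\]
Substituting \eqref{eq:drift} and \eqref{eq:covariance} turns the right side into a second-order differential operator on conjugation-invariant functions of $T$. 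It has constant coefficients in the matrix calculus:
\[
 2N\Tr(T\partial_T)-2s\Tr(\partial_T)
 +2\Tr\big((T\partial_T)^2\big)'-\cdots,
\]
with the index contractions $T_{ad}T_{cb}$, $\delta_{ad}T_{cb}$ and $T_{ad}\delta_{cb}$ written out explicitly.

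The second step is the radial reduction of this matrix operator to eigenvalues. For invariant $F$ one restricts to diagonal $T=\diag(t_1,\dots,t_s)$ and uses the standard formulas for the Laplacian-type operators $\sum T_{ad}T_{cb}\partial_{ab}\partial_{cd}$ and $\sum\delta_{ad}T_{cb}\partial_{ab}\partial_{cd}$ on class functions. Their off-diagonal second derivatives produce the Calogero-type terms $\sum_{j\ne i}(\cdots)/(t_i-t_j)$. Concretely, $F$ can be perturbed by $T\mapsto e^{\epsilon X}Te^{-\epsilon X}$, or one can test on power sums $\Tr T^k$. The test is quick: for $p_k=\Tr T^k$, the first-order part gives $k(2Np_k-2s\,p_{k-1})$ up to combinatorial care. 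The covariance part gives sums $\sum_{m}p_mp_{k-m}$-type terms from $2T_{ad}T_{cb}$, and matching ones with one factor $\delta$ from the other two terms.

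Halving the result and comparing with $D_s$ applied to $p_k$ confirms \eqref{eq:radial}. This comparison also accounts for the shift $N\mapsto N-2s+2$: the diagonal ($a=b$, $c=d$) parts of the $\delta$ and $T$ terms combine with the $-2s$ drift and the count of $j\ne i$ to produce exactly that shift. Since power sums generate the symmetric polynomials and both sides are second-order operators, agreement on products $p_kp_l$ suffices. That agreement follows from matching the carré-du-champ (the covariance form) separately, and \eqref{eq:covariance} gives this form directly as $\Gamma(p_k,p_l)$.

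For \eqref{eq:second-radial}, the plan is to use a symmetry of the four-point configuration rather than repeat the computation. The cyclic relabeling of the factors $(1,2,3,4)\mapsto(2,3,4,1)$ is implemented by swapping roles: $X_1\leftrightarrow$ the $V$-factor in slot $3$, realized by the exchange $(X_1,Y_1,X_2,Y_2)\mapsto(X_2,Y_1,X_1,Y_2)$ suitably paired. It sends the pair $(1,2)$ to $(2,3)$ and maps $M_{11}\leftrightarrow M_{12}$ and $M_{22}\leftrightarrow M_{21}$. Under this map $T$ becomes $M_{12}^{-1}M_{11}M_{21}^{-1}M_{22}$, which is conjugate to $T^{-1}$. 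The prefactor $(\det M_{11}\det M_{22})^r$ becomes $(\det M_{12}\det M_{21})^r=(\det M_{11}\det M_{22})^r\det T^r$. Hence the induced map on $F$ is exactly $P_s$, and $C_{23}=P_s C_{12}P_s$.

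I expect the main obstacle to be the radial computation in the second step. The bookkeeping of the $\delta$-terms and of the sign in $E_{ij}T=-u_ja_i+v_jb_i$ must produce precisely $t_i(t_i-1)$ and the drift $(N-2s+2)t_i-1$. The symmetry argument for $P_s$ also requires care in checking that the relabeling is compatible with the dual-action signs in \eqref{eq:generators}.
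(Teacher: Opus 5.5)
Your proposal is correct. For \eqref{eq:second-radial} it is the paper's argument. The map you actually write down, $(X_1,Y_1,X_2,Y_2)\mapsto(X_2,Y_1,X_1,Y_2)$, is the paper's interchange $X_1\leftrightarrow X_2$. Your formulas for the new cross-ratio (conjugate to $T^{-1}$) and for the prefactor (multiplied by $(\det T)^r$) are also the paper's. Describe this map as the transposition of factors $1$ and $3$, not as the cyclic relabeling $(1,2,3,4)\mapsto(2,3,4,1)$, which would exchange $V$-type and $V^*$-type variables. As a transposition, your sign worry about \eqref{eq:generators} disappears, since both $X_a$ transform as copies of $V$ and carry the same character $\det^r$.

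For \eqref{eq:radial} your committed route differs from the paper's. The paper restricts the chain-rule operator to diagonal $T$ with distinct eigenvalues and uses $\partial^2F/\partial T_{ij}\partial T_{ji}=(\partial_iF-\partial_jF)/(t_i-t_j)$. This derives the coefficients of $D_s$ and shows directly where the Calogero term comes from. Your power-sum route only verifies a known target, but it is purely polynomial: it needs no eigenvalue perturbation and no extension from the distinct-eigenvalue locus. It rests on the fact that $\tfrac12C_{12}$ and $D_s$ both have order at most two and annihilate constants. Hence each is determined on the algebra generated by the $p_k$ by its values $L(p_k)$ and by $\Gamma_L(f,g)=L(fg)-fL(g)-gL(f)$ on pairs $p_k,p_l$. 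For $\tfrac12C_{12}$, $\Gamma$ is $\sum_{i,j}(E_{ij}f)(E_{ji}g)$, which \eqref{eq:covariance} computes. The check does close. With $p_0=s$, both operators send $p_k$ to
\[
 Nk\,p_k-sk\,p_{k-1}+k\sum_{m=1}^{k-1}p_mp_{k-m}-k\sum_{m=0}^{k-2}p_mp_{k-1-m}
 \qquad(k\geq1),
\]
and both give $\Gamma(p_k,p_l)=2kl\,(p_{k+l}-p_{k+l-1})$.
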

\begin{proof}
At a diagonal matrix with distinct eigenvalues, the first derivatives of a conjugation-invariant function in off-diagonal directions vanish, and
\[
 \frac{\partial^2F}{\partial T_{ij}\partial T_{ji}}
 =\frac{\partial_iF-\partial_jF}{t_i-t_j}\quad(i\ne j).
\]
This follows, for example, by perturbing the $2\times2$ block in rows and columns $i,j$: its eigenvalues change by $\pm xy/(t_i-t_j)$ to second order when the off-diagonal entries are $x,y$. Applying Lemma~\ref{lem:matrix-casimir}, the coefficient of $\partial_i^2$ in $C_{12}/2$ is $t_i(t_i-1)$, and its first-order coefficient is
\[
 Nt_i-s+\sum_{j\ne i}\frac{2t_it_j-t_i-t_j}{t_i-t_j}.
\]
The identity
\[
 \frac{2t_it_j-t_i-t_j}{t_i-t_j}
 =\frac{2t_i(t_i-1)}{t_i-t_j}-(2t_i-1)
\]
gives \eqref{eq:radial}. The apparent poles cancel on symmetric polynomials, since every paired numerator contains $\partial_iF-\partial_jF$.

Interchanging $X_1$ and $X_2$ takes the new cross-ratio to a matrix conjugate to $T^{-1}$, and the determinant prefactor is multiplied by $(\det T)^r$. Thus the interchange acts by $P_s$. It exchanges the pairs $(1,2)$ and $(3,2)$, proving \eqref{eq:second-radial}. The operator $P_s$ preserves the individual degree bounds and satisfies $P_s^2=1$.
\end{proof}

\subsection{The exterior-power comparison}\label{sec:exterior}
Set
\begin{equation}\label{eq:shift}
 d=r+s-1,\qquad N'=N-2s+2,\qquad V_d=\C[t]_{\leq d},
\end{equation}
and define operators on $V_d$ by
\begin{equation}\label{eq:one-variable}
 L=t(t-1)\partial_t^2+(N't-1)\partial_t,\qquad
 (P_df)(t)=t^df(t^{-1}).
\end{equation}
For $X\in\End(V_d)$, let $\dd\Gamma(X)$ be its additive exterior power:
\[
 \dd\Gamma(X)(v_1\wedge\cdots\wedge v_s)
 =\sum_{i=1}^s v_1\wedge\cdots\wedge Xv_i\wedge\cdots\wedge v_s.
\]
In contrast, $C_s(G)=\bigwedge^sG$ denotes the multiplicative exterior power of a linear map $G$; its matrix entries are the $s\times s$ minors of $G$.

Put
\begin{equation}\label{eq:epsilon-kappa}
 \epsilon_a=a(a+N'-1),\qquad
 \kappa_s=\sum_{a=0}^{s-1}\epsilon_a,\qquad
 \Delta(t)=\prod_{i<j}(t_j-t_i).
\end{equation}
Multiplication by $\Delta$ identifies the space in \eqref{eq:polynomial-model} with alternating polynomials of degree at most $d$ in each variable, and hence with $\bigwedge^sV_d$. Indeed, every alternating polynomial is divisible by $\Delta$; the quotient is symmetric and has degree at most $d-s+1=r$ in each variable. Denote this isomorphism by $U$.

\begin{theorem}\label{thm:two-residues}
The isomorphism $U$ satisfies
\begin{align}
 U(C_{12}/2)U^{-1}&=\dd\Gamma(L)-\kappa_s I,\label{eq:exterior-first}\\
 U(C_{23}/2)U^{-1}&=\dd\Gamma(P_dLP_d)-\kappa_s I.\label{eq:exterior-second}
\end{align}
The vacuum $F=1$ goes to the filled exterior vacuum
\begin{equation}\label{eq:filled}
 p_0\wedge p_1\wedge\cdots\wedge p_{s-1},
\end{equation}
where $p_a$ is the monic eigenpolynomial of $L$ of degree $a$.
\end{theorem}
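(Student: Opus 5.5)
The proof is a direct computation: conjugate the radial operator $D_s$ of Proposition~\ref{prop:radial} by the Vandermonde $\Delta$. The aim is to show that $\Delta D_s\Delta^{-1}$ is the sum of the one-variable operators $L$ in each $t_i$, shifted by a constant. We then identify that constant with $-\kappa_s$. The second identity and the vacuum statement follow afterwards.

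\textbf{Step 1: the first Casimir.} Write
\[
 D_s=\sum_i\Big(t_i(t_i-1)\partial_i^2+(N't_i-1)\partial_i\Big)+2\sum_i\sum_{j\neq i}\frac{t_i(t_i-1)}{t_i-t_j}\partial_i .
\]
This is a Calogero--Sutherland (Jacobi-type) operator at coupling $\beta=1$, with $g(t)=t(t-1)$ quadratic. For such operators the ground-state conjugation is standard. For a quadratic $g$,
\[
 \Delta^{-1}\Big(\sum_i g(t_i)\partial_i^2\Big)\Delta
 =\sum_i g(t_i)\partial_i^2+2\sum_{i\neq j}\frac{g(t_i)}{t_i-t_j}\partial_i+c_1 .
\]
The constant $c_1$ comes from $\Delta^{-1}\sum_i g\,\partial_i^2\Delta$. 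This is a constant because of the identities $\sum_{i\neq j} g(t_i)/(t_i-t_j)=\binom{s}{2}g''/2\cdot$(linear) and the cubic three-term identity. Concretely, $\sum_i g(t_i)\partial_i^2\Delta=\tfrac{g''}{2}\cdot 2\binom{s}{3}\cdot\Delta$, up to normalization.

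Likewise,
\[
 \Delta^{-1}\Big(\sum_i (N't_i-1)\partial_i\Big)\Delta=\sum_i (N't_i-1)\partial_i+N'\binom{s}{2} .
\]
This holds because $\sum_i(N't_i-1)\partial_i\Delta$ is $N'$ times the Euler operator applied to $\Delta$; the constant term drops out since $\sum_i\partial_i\Delta=0$. I would verify these identities by checking homogeneity and antisymmetry: each side is an alternating polynomial of degree at most that of $\Delta$, so it is a scalar multiple of $\Delta$.

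Combining the two, $U D_s U^{-1}=\sum_i L^{(t_i)}-\kappa'$ for an explicit constant $\kappa'$. Acting on $\bigwedge^s V_d$, realized as alternating polynomials, $\sum_i L^{(t_i)}$ is exactly $\dd\Gamma(L)$.

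\textbf{Step 2: the constant.} Rather than tracking the constant from Step 1, I would fix it with the vacuum. $D_s1=0$, and $U(1)=\Delta$. Now $\Delta$ is, up to scalar, $p_0\wedge\cdots\wedge p_{s-1}$, since $\bigwedge$ of any basis of $\C[t]_{\le s-1}$ gives $\Delta$ up to a nonzero scalar. The $p_a$ exist because $L$ is upper triangular on monomials with diagonal entries $a(a-1)+N'a=\epsilon_a$, and these are distinct for generic $N'$; by continuity the lower-degree span is preserved in any case. So $\dd\Gamma(L)\Delta=\kappa_s\Delta$, which forces the constant to equal $-\kappa_s$. This proves \eqref{eq:exterior-first} and the vacuum statement \eqref{eq:filled}.

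\textbf{Step 3: the second Casimir.} By \eqref{eq:second-radial}, it suffices to show $UP_sU^{-1}=\pm\bigwedge^sP_d$. For an alternating $G=\Delta F$,
\[
 \prod_i t_i^d\,G(t^{-1})=\prod_i t_i^{d}\,\Delta(t^{-1})F(t^{-1}) .
\]
Moreover $\prod_i t_i^{s-1}\Delta(t^{-1})=\pm\Delta(t)$, and $d-(s-1)=r$. Hence $\bigwedge^sP_d\,U=\pm U P_s$. Conjugating \eqref{eq:exterior-first} gives \eqref{eq:exterior-second}, because $\bigwedge^sP_d\,\dd\Gamma(L)\,\bigwedge^sP_d=\dd\Gamma(P_dLP_d)$; the sign cancels in the conjugation. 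We also need that $U$ maps onto all of $\bigwedge^sV_d$, which is the divisibility remark preceding the theorem.

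The main obstacle is Step 1: verifying that the conjugated cross terms produce exactly the interaction term $2\sum t_i(t_i-1)/(t_i-t_j)\,\partial_i$ with coefficient $2$ (coupling $\beta=1$), and that no nonconstant residue survives. This requires the identity
\[
 \sum_{i\neq j\neq k}\frac{g(t_i)}{(t_i-t_j)(t_i-t_k)}=\text{const}
\]
for quadratic $g$, which I would prove by partial fractions.
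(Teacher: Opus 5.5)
Your proposal is correct and follows essentially the paper's own argument: you conjugate $\sum_iL_i$ by the Vandermonde using the product rule, you identify $\Delta$ with $p_0\wedge\cdots\wedge p_{s-1}$ to get the eigenvalue $\kappa_s$, and you use the inversion identity $UP_s=\pm C_s(P_d)U$ for the second Casimir. The paper simply applies the eigenvector fact $\sum_iL_i\Delta=\kappa_s\Delta$ first, so the zeroth-order term of $\Delta^{-1}(\sum_iL_i)\Delta$ is exactly $\kappa_s$ and the three-term identity you call the ``main obstacle'' is never needed; also, $N'\geq2$ makes the $\epsilon_a$ strictly increasing, so no genericity or continuity argument is required, and with monic $p_a$ the alternant equals $\Delta$ exactly, not merely up to a scalar.
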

\begin{proof}
The monomial action is
\begin{equation}\label{eq:L-monomial}
 L(t^a)=\epsilon_a t^a-a^2t^{a-1}.
\end{equation}
Since $N'\geq2$, the numbers $\epsilon_0,\ldots,\epsilon_d$ are distinct. Thus the monic eigenpolynomials $p_a$ exist and form a basis. Their alternant equals the Vandermonde:
\[
 \det(p_{i-1}(t_j))_{i,j=1}^s=\Delta(t).
\]
It follows that $\sum_iL_i\Delta=\kappa_s\Delta$. The product rule now gives
\[
 \Delta^{-1}\Bigl(\sum_iL_i\Bigr)\Delta
 =D_s+\kappa_s I,
\]
since the additional first-order coefficient is
\[
 2t_i(t_i-1)\partial_i\log\Delta
 =2t_i(t_i-1)\sum_{j\ne i}(t_i-t_j)^{-1}.
\]
This proves \eqref{eq:exterior-first} and \eqref{eq:filled}.

The Vandermonde inversion identity is
\[
 \Delta(t_1^{-1},\ldots,t_s^{-1})
 =(-1)^{\binom{s}{2}}\Delta(t)\prod_i t_i^{-(s-1)}.
\]
Consequently,
\begin{equation}\label{eq:P-intertwiner}
 UP_s=(-1)^{\binom{s}{2}}C_s(P_d)U.
\end{equation}
Conjugate \eqref{eq:exterior-first} using \eqref{eq:P-intertwiner} and Proposition~\ref{prop:radial}. The sign occurs twice and cancels, proving \eqref{eq:exterior-second}.
\end{proof}

For $\lambda\subseteq(r^s)$, let
\[
 I(\lambda)=\{\lambda_i+s-i:1\leq i\leq s\},\qquad I_0=\{0,\ldots,s-1\}.
\]
Index sets in minors are always listed increasingly. Reversing both rows and columns in \eqref{eq:B} does not change its determinant. The identity
\begin{equation}\label{eq:content-eigen}
 \sum_{a\in I(\lambda)}\epsilon_a-\kappa_s
 =N|\lambda|+2\ct(\lambda)
\end{equation}
follows by substituting $a_i=\lambda_i+s-i$ and using
$2\ct(\lambda)=\sum_i\lambda_i(\lambda_i-2i+1)$.
Every nonempty $\lambda$ has a strictly positive value in \eqref{eq:content-eigen}: the contribution of any box is $N+2(j-i)\geq2$. Thus zero is a simple eigenvalue of $C_{12}/2$, with vacuum vector $1$. Its exterior index is $I_0$.

\section{Monodromy and the reduced closure}\label{sec:closure}
\subsection{From KZ systems to pure braids}
Use the trace form on $\gl_N$, normalized so that its restriction to $\slie_N$ gives long roots squared length two. The scalar quadratic Casimir on $R=(r^s)$ is
\[
 c_R=rs(N+r-s).
\]
On the four-point invariant space, the KZ equation is
\begin{equation}\label{eq:KZ}
 \frac{\dd G}{\dd z}=h\left(\frac{\Omega_{12}}z+
 \frac{\Omega_{23}}{z-1}\right)G,
 \qquad \Omega_{ij}=C_{ij}/2-c_RI.
\end{equation}
For the one-row color $[d]$ at rank $N'$, its two residues are
\[
 L-c_dI,\qquad P_dLP_d-c_dI,\qquad c_d=d(d+N'-1).
\]
By Theorem~\ref{thm:two-residues}, the rectangular residues are their additive exterior powers plus the same scalar
\begin{equation}\label{eq:scalar-residue}
 \sigma=s c_d-\kappa_s-c_R.
\end{equation}
Choose a common basepoint $z_*\in(0,1)$ and based counterclockwise loops around $0$ and $1$, with paths to the punctures along the real interval. Use these choices for both systems. If $G(z)$ is a fundamental solution of the one-row system, $C_s(G(z))$ solves the system with the additive exterior residues. Adding scalar residues $aI,bI$ multiplies a fundamental solution by $z^{ha}(1-z)^{hb}$. The two based monodromies are therefore multiplied by $e^{2\pi iha}$ and $e^{2\pi ihb}$, respectively. Both identities use the same identification of the solution spaces.

The quasi-Hopf form of the Drinfeld--Kohno theorem identifies the KZ and formal Drinfeld--Jimbo braided categories by an algebra isomorphism and a tensor twist \cite{Drinfeld,Kohno}; see also \cite[Theorem 7.3]{Brochier}. In particular, it identifies the pure-braid actions on the alternating tensor product and on its invariant space by a simultaneous conjugacy. We apply the theorem to $\slie_N$. Restoring the one-dimensional center of $\gl_N$ adds the scalar residues determined by the central characters of $R$ and $R^*$.

Set $q=e^{\pi ih}$. Divide each adjacent-pair monodromy by its vacuum eigenvalue; write $M_0,M_1$ for the resulting operators. The eigenvalue attached to a pair Casimir eigenvalue $c$ is then $q^c$. For the one-row system denote the corresponding operators by $M'_0,M'_1$. Define
\begin{equation}\label{eq:one-eigenvalues}
 A'=q^{N'}=Aq^{2(1-s)},\qquad
 \theta_a=(A')^{2a}q^{2a(a-1)},\qquad
 \eta_s=\left(\prod_{a=0}^{s-1}\theta_a\right)^{-1}=q^{-2\kappa_s}.
\end{equation}
The scalar \eqref{eq:scalar-residue} and the external Casimir scalars cancel after vacuum normalization. The filled vacuum contributes $\prod_{a<s}\theta_a$, which must still be divided out.

\begin{proposition}\label{prop:quantum-pair}
Over a formal Laurent-series field containing the channel normalizations, the pair $(M_0,M_1)$ for $(r^s)$ at rank $N$ is simultaneously conjugate to
\begin{equation}\label{eq:quantum-pair}
 \left(\eta_s C_s(M'_0),\ \eta_s C_s(M'_1)\right)
\end{equation}
for $[d]$ at rank $N'$. Both vacuum spectral projectors are transported by this simultaneous conjugacy.
\end{proposition}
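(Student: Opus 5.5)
The argument runs in three stages. First, compare the classical KZ monodromies. Then transport the comparison to the quantum side by the Drinfeld--Kohno theorem. Finally, fix the scalars by vacuum normalization.

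\emph{Classical stage.} Let $G'(z)$ be the fundamental solution of the one-row KZ system on $V_d$ at rank $N'$. Its residues are $L-c_dI$ and $P_dLP_d-c_dI$. Since $\frac{\dd}{\dd z}C_s(G')=\dd\Gamma(\Omega')\,C_s(G')$ whenever $\frac{\dd}{\dd z}G'=\Omega' G'$, the multiplicative exterior power $C_s(G')$ solves the system on $\bigwedge^sV_d$ with residues $\dd\Gamma(L)-sc_dI$ and $\dd\Gamma(P_dLP_d)-sc_dI$.

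By Theorem~\ref{thm:two-residues}, conjugating by $U$ turns the rectangular residues into these residues plus the common scalar $\sigma$ of \eqref{eq:scalar-residue}. Hence
\[
 U^{-1}z^{h\sigma}(1-z)^{h\sigma}C_s(G'(z))
\]
is a fundamental solution of the rectangular system. With the fixed basepoint and based loops, the rectangular monodromies around $0$ and $1$ equal $U^{-1}\,e^{2\pi ih\sigma}C_s(\mathrm{Mon}'_j)\,U$ for $j=0,1$, with one and the same $U$ for both loops. The $\gl_N$ central correction likewise contributes only scalars to each residue.

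\emph{Quantum transport.} By Drinfeld--Kohno \cite{Drinfeld,Kohno,Brochier}, each pair of KZ monodromies is simultaneously conjugate to the pair of quantum pure-braid generators $\sigma_{12}^2,\sigma_{23}^2$ on the invariant space. This holds for both the rectangular system and the one-row system. The conjugations live over the formal field, since the Drinfeld twist and associator are formal. Composing the three simultaneous conjugacies gives a single invertible $W$ with
\[
 \bigl(\widetilde M_0,\widetilde M_1\bigr)=W\bigl(c_0\,C_s(\widetilde M'_0),\;c_1\,C_s(\widetilde M'_1)\bigr)W^{-1}.
\]
Here tildes denote unnormalized monodromies and $c_0,c_1$ are scalars collecting $e^{2\pi ih\sigma}$ and the central factors.

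\emph{Normalization, and the main obstacle.} The scalars $c_0,c_1$ have no intrinsic meaning, so I will eliminate them by eigenvalue bookkeeping rather than computing them. Write $\mathrm{Spec}$ for spectrum.
\begin{itemize}
\item[(i)] By \eqref{eq:content-eigen} and the semisimplicity of $L$ (the $\epsilon_a$ are distinct), the vacuum $F=1$ is the unique eigenvector of $C_{12}/2$ with eigenvalue $0$. Its image under $U$ is the filled vector $p_0\wedge\cdots\wedge p_{s-1}$ of \eqref{eq:filled}.
\item[(ii)] On the exterior side, $C_s(\widetilde M'_0)$ acts on this filled vector by the product of the one-row eigenvalues of $p_0,\dots,p_{s-1}$.
\item[(iii)] After dividing the one-row operators by their own vacuum eigenvalue, these eigenvalues are $\theta_0,\dots,\theta_{s-1}$, by the formula $q^{c}$ with $c=\epsilon_a$ and \eqref{eq:one-eigenvalues}. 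Their product is $\eta_s^{-1}$.
\end{itemize}
Dividing $\widetilde M_0$ by its vacuum eigenvalue therefore yields exactly $\eta_sC_s(M'_0)$. The same argument applies at the puncture $1$: conjugation by $P_s$ and $C_s(P_d)$ through \eqref{eq:P-intertwiner} carries the second vacuum (pairing $(2,3)$, $(1,4)$) to the filled vector for $P_dLP_d$. So the normalized $M_1$ is $\eta_sC_s(M'_1)$, with the same $\eta_s$ because the spectra agree.

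The main care point is that $W$ must serve simultaneously for both loops and both normalizations. It does, because normalizing divides each operator by a scalar, and scalar rescaling commutes with conjugation by $W$.

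Finally, each vacuum spectral projector is a polynomial in its operator with a simple eigenvalue. The eigenvalue is simple because the exponents $N|\lambda|+2\ct(\lambda)$ are positive for $\lambda\ne\varnothing$, and this stays true generically in $h$. Hence each projector is carried by the same conjugation $W$, which proves the final assertion.
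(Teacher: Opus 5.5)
Your proposal is correct and follows the paper's proof. Both arguments take exterior powers of a one-row fundamental solution with the common scalar shift $\sigma$, apply Drinfeld--Kohno to both systems, divide by the vacuum eigenvalues, and transport the projectors as polynomials in their operators; you spell out the eigenvalue bookkeeping that the paper leaves implicit.

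One small slip in (iii): the normalized one-row eigenvalue is $q^{c}$ with $c=2\epsilon_a$, which is the eigenvalue of $C_{12}$ rather than of $L=C_{12}/2$. Indeed $\theta_a=q^{2\epsilon_a}$ at $A'=q^{N'}$, so your conclusion $\prod_{a<s}\theta_a=\eta_s^{-1}$ is unaffected.
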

\begin{proof}
Take exterior powers of fundamental solutions of the one-row KZ equation and use \eqref{eq:scalar-residue}. Their monodromies are the multiplicative exterior powers with the scalar factors just described. Divide by the rectangular vacuum eigenvalues to obtain \eqref{eq:quantum-pair}. Apply the Drinfeld--Kohno equivalence to each of the two systems. Finally, the vacuum eigenvalue is simple by \eqref{eq:content-eigen}; its spectral projector is a rational function of its operator, and hence is preserved by conjugacy.
\end{proof}
We take spectral projectors over $\C((h))$, adjoining the channel normalizations when necessary. The operators are diagonalizable there, and differences between distinct formal eigenvalues are invertible. Thus the projectors are defined over this field even though all twist eigenvalues specialize to one at $h=0$.

\subsection{Vacuum projectors and the reduced plat}
Let $M_0,M_1$ be invertible, diagonalizable operators, each with simple eigenvalue one, and let $P_0,P_1$ be their corresponding spectral projectors. For $m,n\in\Z$, if $\Tr(P_0P_1)\ne0$, put
\begin{equation}\label{eq:trace-closure}
 \mathscr J_{m,n}(M_0,M_1)
 =\frac{\Tr(P_0M_1^mM_0^nP_1)}{\Tr(P_0P_1)}.
\end{equation}
The trace is the ordinary trace of a finite-dimensional vector-space operator.

\begin{lemma}\label{lem:intrinsic-closure}
The scalar \eqref{eq:trace-closure} is invariant under arbitrary simultaneous conjugation. If
\[
 M_0=D,\qquad M_1=FDF^{-1},\qquad D_{00}=1,
\]
and the eigenvalue one occurs only at index zero, then
\begin{equation}\label{eq:matrix-closure}
 \mathscr J_{m,n}(D,FDF^{-1})
 =\frac{(FD^mF^{-1}D^nF)_{00}}{F_{00}}.
\end{equation}

\end{lemma}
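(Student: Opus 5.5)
The plan has two parts: prove conjugation invariance of the ratio, then compute it in the eigenbasis of $M_0$.

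\emph{Invariance.} Let $G$ be invertible and replace $(M_0,M_1)$ by $(GM_0G^{-1},GM_1G^{-1})$. The spectral projector of an operator is a rational function of that operator (equivalently, a polynomial interpolating the idempotent on the spectrum). Hence $P_0,P_1$ become $GP_0G^{-1},GP_1G^{-1}$. Every factor in the numerator and in the denominator of \eqref{eq:trace-closure} is conjugated by the same $G$, and cyclicity of the trace removes $G$. The simple-eigenvalue hypothesis is also preserved, so the quotient is unchanged.

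\emph{Formula \eqref{eq:matrix-closure}.} Take $M_0=D$ diagonal with $D_{00}=1$ the only eigenvalue equal to one. Then $P_0=e_0e_0^T$ is the matrix unit $E_{00}$. Next, $M_1=FDF^{-1}$ has projector $P_1=FE_{00}F^{-1}$, because conjugation transports spectral projectors. So:

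1. The denominator is $\Tr(E_{00}FE_{00}F^{-1})=F_{00}(F^{-1})_{00}$.
2. Using $M_1^m=FD^mF^{-1}$, the numerator is
\[
 \Tr\bigl(E_{00}FD^mF^{-1}D^nFE_{00}F^{-1}\bigr)
 =(FD^mF^{-1}D^nF)_{00}\,(F^{-1})_{00}.
\]
3. The common factor $(F^{-1})_{00}$ cancels. This is legitimate because the standing hypothesis $\Tr(P_0P_1)\neq0$ forces both $F_{00}\neq0$ and $(F^{-1})_{00}\neq0$. The result is \eqref{eq:matrix-closure}.

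No step is a genuine obstacle. The one point needing care is that spectral projectors are polynomial in the operator, which requires diagonalizability. We assume this, and over $\C((h))$ it is guaranteed by the remark after Proposition~\ref{prop:quantum-pair}. Given that, conjugation commutes with taking projectors, and the rest is bookkeeping with rank-one matrices.
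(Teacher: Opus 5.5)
Your proof is correct and follows the paper's argument essentially step for step. The paper likewise writes each vacuum projector as the polynomial $\prod_{\tau\ne1}(M_i-\tau I)/(1-\tau)$, so all four factors conjugate together and cyclicity of the trace gives invariance; it then uses $P_0=e_0e_0^T$ and $P_1=Fe_0e_0^TF^{-1}$ to obtain numerator $(FD^mF^{-1}D^nF)_{00}(F^{-1})_{00}$ and denominator $F_{00}(F^{-1})_{00}$, and your remark that $\Tr(P_0P_1)\neq0$ forces $F_{00}\neq0$ and $(F^{-1})_{00}\neq0$ is exactly what justifies the paper's cancellation of the nonzero overlap.
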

\begin{proof}
A projector is the polynomial
\[
 P_i=\prod_{\tau\in\Spec(M_i)\setminus\{1\}}
 \frac{M_i-\tau I}{1-\tau},
\]
where each distinct eigenvalue occurs once. Thus all four operators in \eqref{eq:trace-closure} transform by the same conjugation; invariance follows from cyclicity of trace. In the displayed basis,
\[
 P_0=e_0e_0^T,\qquad P_1=Fe_0e_0^TF^{-1}.
\]
The numerator in \eqref{eq:trace-closure} is
$(FD^mF^{-1}D^nF)_{00}(F^{-1})_{00}$, and the denominator is
$F_{00}(F^{-1})_{00}$. Cancelling the nonzero overlap gives \eqref{eq:matrix-closure}.
\end{proof}
\begin{figure}[tb]
\centering
\begin{tikzpicture}[x=0.9cm,y=0.65cm,line width=0.65pt,>=Stealth]
\foreach \x in {0,1,2,3}{\draw (\x,0)--(\x,5);}
\draw (0,5) .. controls (0,6) and (1,6) .. (1,5);
\draw (2,5) .. controls (2,6) and (3,6) .. (3,5);
\draw (1,0) .. controls (1,-0.9) and (2,-0.9) .. (2,0);
\draw (0,0) .. controls (0,-2.2) and (3,-2.2) .. (3,0);
\draw[fill=white] (0.75,3.2) rectangle (2.25,4.1);
\node at (1.5,3.65) {$M_1^m$};
\draw[fill=white] (-0.25,1.25) rectangle (1.25,2.15);
\node at (0.5,1.7) {$M_0^n$};
\draw[->] (0,4.7)--(0,4.35);\draw[->] (1,4.35)--(1,4.7);
\draw[->] (2,4.7)--(2,4.35);\draw[->] (3,4.35)--(3,4.7);
\node[left] at (-0.25,1.7) {$2n$ crossings};
\node[right] at (3.2,3.65) {$2m$ crossings};
\node at (0.5,6.05) {$(12)$};\node at (2.5,6.05) {$(34)$};
\node[right] at (3,-0.8) {$(14)(23)$};
\end{tikzpicture}
\caption{The alternating four-plat. The upper caps pair $(12)(34)$ and the lower cups pair $(14)(23)$. The boxes contain even adjacent braids, acting from bottom to top. A positive unit of either box has oriented writhe $-2$. Multiplying its RT operator by the external ribbon correction gives the vacuum-normalized operator indicated in the figure.}\label{fig:plat}
\end{figure}

The unreduced quantum dimension is
\begin{equation}\label{eq:quantum-dimension}
 d_\lambda(A,q)=\prod_{\square\in\lambda}
 \frac{D_{c(\square)}}{\br{h(\square)}}.
\end{equation}
At $A=q^N$ and $\ell(\lambda)\leq N$, this is the quantum dimension of
$\mathbb S_\lambda\C^N$.

\begin{proposition}\label{prop:RT-closure}
For the reduced zero-framed double-twist invariant,
\begin{equation}\label{eq:rt-trace}
 H_R(K_{m,n};q^N,q)=\mathscr J_{m,n}(M_0,M_1).
\end{equation}
Moreover, $\Tr(P_0P_1)=(\dim_q R)^{-2}\ne0$.
\end{proposition}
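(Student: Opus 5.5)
We will evaluate the plat of Figure~\ref{fig:plat} directly in the Reshetikhin--Turaev functor at $A=q^N$, colouring the strands alternately by $R$ and $R^*$, so that the four bottom points carry $R\otimes R^*\otimes R\otimes R^*$.

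\textbf{Cups and caps as vectors.} The upper caps pairing $(12)(34)$ define a covector on $\cV_{r,s,N}$. The lower cups pairing $(14)(23)$ define a vector in $\cV_{r,s,N}$. The covector is the dual of the vacuum of $C_{12}$, that is, the unit of the pair channel $(12)(34)$. The cup vector $(14)(23)$ is, up to a scalar, the vacuum of the $(23)$ channel. To see this, write it as a coevaluation on $(23)$ nested inside a coevaluation on $(14)$; this vector lies in the zero eigenspace of $C_{23}/2$. That eigenspace is one-dimensional by \eqref{eq:content-eigen}, applied after conjugating by $P_s$.

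\textbf{Reduction to the trace formula.} With these identifications, the closed evaluation becomes $\langle \text{cap}, M_1^mM_0^n\,\text{cup}\rangle$ up to a framing/ribbon scalar. The boxes contain pure braids in the adjacent pairs. A full twist of pair $(12)$ acts on the channel of $(12)$-Casimir eigenvalue $c$ by $q^{c-2c_R}$ times a twist normalisation. By Figure~\ref{fig:plat}, each positive unit has writhe $-2$, and the external ribbon correction converts this to the vacuum-normalised $M_0$. The same holds for $M_1$. Since the knot has framing zero, all remaining ribbon factors cancel.

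Write $v_0$ for the cap covector and $w_1$ for the cup vector. Then $P_0=v_0^*v_0/\langle v_0,v_0^*\rangle$, with the normalisation chosen so that $P_0$ is a projector, and similarly for $P_1$. Hence
\[
 \langle v_0,M_1^mM_0^nw_1\rangle
 =\frac{\Tr(P_0M_1^mM_0^nP_1)\,\cdot(\text{pairings})}{\Tr(P_0P_1)\cdot(\text{pairings})}\cdot\langle v_0,w_1\rangle .
\]
In this expression $\langle v_0,w_1\rangle$ is the unknot evaluation of the $m=n=0$ plat, which is the two-component unlink $=(\dim_qR)^2$ in the planar closure. Dividing by the colored unknot $\dim_q R$ gives the reduced invariant. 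I must check that the unreduced plat evaluation equals $\dim_qR\cdot H_R$. The ratio form \eqref{eq:trace-closure} is insensitive to the normalisations of $v_0$ and $w_1$, so only this final reduction needs the scalar bookkeeping.

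\textbf{The value of $\Tr(P_0P_1)$.} This equals $\langle v_0,w_1\rangle\langle w_0,v_1\rangle/(\langle v_0,w_0\rangle\langle v_1,w_1\rangle)$, where $w_0$ is the $(12)(34)$ cup vector and $v_1$ the $(14)(23)$ cap covector. The outer factors are planar closures. They give $(\dim_qR)^2$ for each same-pairing composition (two circles) and $\dim_qR$ for each mixed composition (one circle). The result is $(\dim_qR)^{2}/(\dim_qR)^4=(\dim_qR)^{-2}$, which is nonzero since $\dim_qR=d_{(r^s)}(q^N,q)\ne0$.

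\textbf{Main obstacle.} The delicate step is the framing and ribbon bookkeeping. I must show that the $R$-matrix operators on each box equal $M_0^n$ and $M_1^m$ exactly after the external ribbon correction. This requires checking that the writhe $-2$ per unit matches $q^{-2c_R}$ against the vacuum eigenvalue, and that the total writhe of the plat is compensated to zero framing.
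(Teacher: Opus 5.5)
Your argument follows the paper's proof essentially step by step. The $(12)(34)$ cap covector and the $(14)(23)$ cup vector are the vacuum eigenvectors, and $P_0,P_1$ are the associated rank-one operators. The trace ratio collapses to $\langle v_0,Ww_1\rangle/\langle v_0,w_1\rangle$ with $W=M_1^mM_0^n$, and $\Tr(P_0P_1)$ comes from counting circles.

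The framing step you flag as the main obstacle is handled in the paper along the lines you indicate, and it is short. By ribbon balancing, a positive full twist on $R\otimes R^*$ acts on the channel $Q$ by $q^{c_Q-2c_R}$. The two strands in a box are oppositely oriented, so each unit has writhe $-2$ and its zero-framing correction is $q^{2c_R}$. The corrected eigenvalue $q^{c_Q}$ equals $1$ on the vacuum, which is precisely the normalization defining $M_0$ and $M_1$. Negative units take the inverse correction.

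There is one incorrect statement you should fix. The $m=n=0$ plat pairs the $(12)(34)$ caps with the $(14)(23)$ cups, and these trace out a single circle $1\to2\to3\to4\to1$. So $\langle v_0,w_1\rangle=\dim_qR$, not the two-component value $(\dim_qR)^2$. You use the correct count yourself in the next paragraph, where each mixed composition is one circle. Taken literally, your value would give $\mathscr J_{m,n}=H_R/\dim_qR$ instead of $H_R$.

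With the correct count, $\mathscr J_{m,n}=\langle v_0,Ww_1\rangle/\langle v_0,w_1\rangle$ is the zero-framed plat evaluation divided by the unknot evaluation. That quotient is the reduced invariant by definition. So the check you leave open there, that the unreduced plat equals $\dim_qR\cdot H_R$, is automatic, and no further division by the unknot is needed.
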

\begin{proof}
Work in the pivotal ribbon category of quantum-group representations \cite{RT} and set $d_R=\dim_qR$. Let $u$ be the raw cup state pairing $(12)(34)$, and let $v$ be the raw cup state pairing $(14)(23)$. Let $u^*,v^*$ be their planar cap covectors. The compositions are evaluations of disjoint circles:
\[
 u^*u=v^*v=d_R^2,\qquad u^*v=v^*u=d_R.
\]
The orientations pair each evaluation with the corresponding dual module. Set $u_0=u/d_R$, $v_0=v/d_R$, $\ell_0=u^*/d_R$, and $\ell_1=v^*/d_R$. The two rank-one operators
\[
 P_0=u_0\ell_0,\qquad P_1=v_0\ell_1
\]
are precisely the vacuum spectral projectors. The cap map vanishes on a nontrivial adjacent channel, and the trivial channel is the unique channel with relative twist eigenvalue one. Therefore
\[
 \Tr(P_0P_1)=(\ell_0v_0)(\ell_1u_0)=d_R^{-2}.
\]

For $W=M_1^mM_0^n$, the numerator of \eqref{eq:trace-closure} is
$(\ell_0Wv_0)(\ell_1u_0)$. Its ratio with the denominator is
\[
 \frac{\ell_0Wv_0}{\ell_0v_0}=\frac{u^*Wv}{d_R}.
\]
The last expression is the reduced evaluation of the plat in Figure~\ref{fig:plat}. Opening the outer lower cup exhibits the usual two twist regions of the double-twist diagram.

It remains to identify its framing. Ribbon balancing gives the eigenvalue $q^{c_Q-2c_R}$ for a positive pure braid on $R\otimes R^*$ in channel $Q$. Its oriented writhe is $-2$, so the zero-framing correction is $q^{2c_R}$. The corrected eigenvalue is $q^{c_Q}$, exactly the vacuum normalization used to define $M_i$. Applying the inverse correction handles a negative twist. The operator $W$ therefore includes the zero-framing correction in both boxes, and division by $d_R$ gives the reduced invariant. This proves \eqref{eq:rt-trace} for all integral $m,n$.
\end{proof}

We now evaluate the closure using the one-row color $[d]$ at rank $N'$. Let $S$ be its symmetric orthogonal exclusive Racah matrix. This is the kind-I matrix $\bar S$ in \cite[Section 2, equations (1), (4), and (5)]{MMS}, with the overall sign chosen so that $S_{00}=(\dim_q[d])^{-1}$. Its normalization is
\begin{equation}\label{eq:S-properties}
 S^T=S,\qquad S^2=I,\qquad
 S_{0a}=\frac{\sqrt{\delta_a}}{\gamma},\qquad
 \gamma=\dim_q[d],\qquad \delta_0=1.
\end{equation}
Here $\delta_a$ is the quantum dimension of the $a$th summand of
$[d]\otimes[d]^*$. Closing a normalized trivalent vertex gives the vacuum row in \eqref{eq:S-properties}. Reflection gives $S^T=S$, and composing the two changes of fusion basis gives $S^2=I$. We choose the square roots with positive constant term at $q=1$.

The one-row decomposition can also be seen by the contraction filtration of
$\Sym^a\C^{N'}\otimes\Sym^a(\C^{N'})^*$: its successive harmonic components have highest weights
$a\omega_1+a\omega_{N'-1}$. These components occur once and deform to the generic quantum category. Consequently their dimensions are
\[
 \delta_a=\chi_a^2-\chi_{a-1}^2,\qquad
 \chi_a=\qb{N'+a-1}{a},\qquad \chi_{-1}=0.
\]
In particular, each $\delta_a$ is nonzero, since its specialization at $q=1$ is positive for $N'\geq2$.

In this basis,
\[
 M'_0=\theta:=\diag(\theta_0,\ldots,\theta_d),\qquad
 M'_1=S\theta S.
\]
Put $\widehat S=C_s(S)$ and $\widehat D=\eta_sC_s(\theta)$. The exterior vacuum is indexed by $I_0$. Proposition~\ref{prop:quantum-pair} and Lemma~\ref{lem:intrinsic-closure} give the following formula.

\begin{theorem}\label{thm:rt-determinant}
For $N\geq2s$,
\begin{equation}\label{eq:rt-determinant}
 H_{(r^s)}(K_{m,n};q^N,q)
 =\eta_s^{m+n}
 \frac{\det\bigl((S\theta^mS\theta^nS)_{I_0,I_0}\bigr)}
 {\det(S_{I_0,I_0})}.
\end{equation}
The denominator is nonzero.
\end{theorem}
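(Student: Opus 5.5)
The plan is to combine Proposition~\ref{prop:RT-closure}, Proposition~\ref{prop:quantum-pair} and Lemma~\ref{lem:intrinsic-closure}, and then turn the exterior-power expression into minors by Cauchy--Binet.

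\emph{Reduction to exterior powers.} By Proposition~\ref{prop:RT-closure}, $H_{(r^s)}(K_{m,n};q^N,q)=\mathscr J_{m,n}(M_0,M_1)$. By Proposition~\ref{prop:quantum-pair}, the pair $(M_0,M_1)$ is simultaneously conjugate to $(\eta_sC_s(M_0'),\eta_sC_s(M_1'))$, and the vacuum projectors are transported along with it. Lemma~\ref{lem:intrinsic-closure} says $\mathscr J$ is invariant under simultaneous conjugation, so we may replace $(M_0,M_1)$ by this pair. In the one-row fusion basis, $M_0'=\theta$ and $M_1'=S\theta S=S\theta S^{-1}$, using $S^2=I$. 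Hence
\[
 M_0\sim\widehat D=\eta_sC_s(\theta),\qquad M_1\sim\widehat S\widehat D\widehat S^{-1},
\]
because $C_s$ is multiplicative and $C_s(S)^{-1}=C_s(S)$.

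\emph{Applying the matrix closure formula.} We need $\widehat D$ to be diagonal with entry one exactly at $I_0$. Its entries are $\eta_s\prod_{a\in I}\theta_a$, so the $I_0$ entry is $\eta_s\prod_{a<s}\theta_a=1$. The other entries are $q^{2(N|\lambda|+2\ct(\lambda))}$ up to normalization, by \eqref{eq:content-eigen}. By the positivity remark following \eqref{eq:content-eigen}, these are $\neq1$ for $\lambda\neq\varnothing$, so simplicity holds. Formula \eqref{eq:matrix-closure} with $F=\widehat S$ then gives
\[
 \frac{(\widehat S\widehat D^m\widehat S\widehat D^n\widehat S)_{I_0I_0}}{\widehat S_{I_0I_0}}
 =\eta_s^{m+n}\frac{C_s(S\theta^mS\theta^nS)_{I_0I_0}}{C_s(S)_{I_0I_0}}.
\]
Here we use $\widehat D^m=\eta_s^mC_s(\theta^m)$ and multiplicativity of $C_s$. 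Since entries of $C_s(G)$ are $s\times s$ minors, this is \eqref{eq:rt-determinant}.

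\emph{Nonvanishing of the denominator.} This is the step I expect to need the most care, since it is the hypothesis $\Tr(P_0P_1)\neq0$ and $F_{00}\neq0$ in Lemma~\ref{lem:intrinsic-closure}. Invariance gives $\Tr(P_0P_1)=F_{00}(F^{-1})_{00}=\det(S_{I_0I_0})^2$, because $\widehat S^{-1}=\widehat S$. Proposition~\ref{prop:RT-closure} gives $\Tr(P_0P_1)=(\dim_qR)^{-2}\neq0$. Therefore $\det(S_{I_0,I_0})=\pm(\dim_qR)^{-1}\neq0$.

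The subtle point is that the conjugacy is over a formal field with adjoined normalizations. The trace identity, however, is an equality of scalars, so it survives the conjugation. One must also check that the vacuum projector of $\widehat S\widehat D\widehat S$ corresponds to $P_1$. Proposition~\ref{prop:quantum-pair} asserts exactly this, and it is consistent with the simplicity established above.
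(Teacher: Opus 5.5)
Your proposal is correct and essentially identical to the paper's proof. You transport $(M_0,M_1)$ to $(\widehat D,\widehat S\widehat D\widehat S)$, apply \eqref{eq:matrix-closure} at $I_0$ with $F=\widehat S=\widehat S^{-1}$ together with multiplicativity of compound matrices, and deduce nonvanishing from $\Tr(P_0P_1)=\det(S_{I_0,I_0})^2=(\dim_q R)^{-2}$. Two small remarks: your check via \eqref{eq:content-eigen} that only the $I_0$ entry of $\widehat D$ equals one just makes explicit a hypothesis of Lemma~\ref{lem:intrinsic-closure} that the paper leaves implicit, and logically the nonvanishing of $\widehat S_{I_0,I_0}$ should come before you divide by it.
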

\begin{proof}
By Proposition~\ref{prop:RT-closure}, the invariant is the trace ratio of the pair \eqref{eq:quantum-pair}. In the exterior basis this pair is
$(\widehat D,\widehat S\widehat D\widehat S)$, with $\widehat S^2=I$. Apply \eqref{eq:matrix-closure} at index $I_0$ and use the multiplicativity of compound matrices. The product of the two powers of $\eta_s$ gives the displayed scalar. Also,
\[
 \Tr(P_0P_1)=(\widehat S_{I_0,I_0})^2
 =\det(S_{I_0,I_0})^2=(\dim_q(r^s))^{-2}.
\]
This proves the nonvanishing assertion.
\end{proof}

\section{Gaussian determinants and the double-twist formula}\label{sec:double-twist-proof}
The partition sum has an exterior-power description parallel to that of
the four-point invariant space. We first derive its determinant and then
compare it with Theorem~\ref{thm:rt-determinant}.

\subsection{The Gaussian matrix and its exterior powers}
Return to independent $A,q$ and set
\begin{equation}\label{eq:particle-data}
 d=r+s-1,\qquad A'=Aq^{2(1-s)},\qquad y=q^{1-s},\qquad
 \theta_a=(A')^{2a}q^{2a(a-1)}.
\end{equation}
Define a $(d+1)\times(d+1)$ lower-triangular matrix and a diagonal shape matrix by
\begin{equation}\label{eq:bY}
 b_{ac}=\theta_c\Gb ac,\qquad
 Y_{\lambda\lambda}=y^{|\lambda|},\qquad
 \eta_s=\left(\prod_{a=0}^{s-1}\theta_a\right)^{-1}.
\end{equation}

\begin{proposition}\label{prop:compound}
On $\cP_{r,s}$,
\begin{equation}\label{eq:compound}
 B^{(s)}=\eta_sY C_s(b)Y^{-1}.
\end{equation}
Consequently, for every $m\in\Z$,
\begin{equation}\label{eq:powers}
 ((B^{(s)})^m)_{\lambda\mu}
 =\eta_s^m y^{|\lambda|-|\mu|}
 \det\bigl((b^m)_{I(\lambda),I(\mu)}\bigr).
\end{equation}
Furthermore, \eqref{eq:B-first} holds.
\end{proposition}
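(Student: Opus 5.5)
The plan is to compare entries directly, using the multiplicativity of compound matrices, and then to evaluate the first column by a Vandermonde computation.

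\emph{Indexing.} First I would check that $\lambda\mapsto I(\lambda)$ is a bijection from $\cP_{r,s}$ onto the $s$-element subsets of $\{0,\ldots,d\}$, since $d=r+s-1$. Under this bijection, $C_s(b)$ is naturally a matrix on $\cP_{r,s}$ with entries
\[
 C_s(b)_{\lambda\mu}=\det\bigl(b_{I(\lambda),I(\mu)}\bigr).
\]
As remarked after \eqref{eq:content-eigen}, reversing both rows and columns does not change a minor. So we may use the decreasing order $\lambda_i+s-i$ that appears in \eqref{eq:B}.

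\emph{Proof of \eqref{eq:compound}.} Since $b_{ac}=\theta_c\Gb ac$, the factor $\theta_c$ depends only on the column. Pulling these factors out of the minor gives
\[
 C_s(b)_{\lambda\mu}
 =\prod_{c\in I(\mu)}\theta_c\,
 \det\Bigl(\Gb{\lambda_i+s-i}{\mu_j+s-j}\Bigr)_{i,j=1}^s .
\]
It then suffices to show
\[
 \eta_s\prod_{c\in I(\mu)}\theta_c=\Theta_\mu .
\]
To do this I would compare the sums of $a$ and of $a(a-1)$ over $I(\mu)$ and over $I_0$.
\begin{itemize}
 \item The first difference is $|\mu|$. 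This contributes $(A')^{2|\mu|}=A^{2|\mu|}q^{4(1-s)|\mu|}$.
 \item For the second difference, subtract $N'|\mu|$ from \eqref{eq:content-eigen} and use $N'=N-2s+2$. This gives $2\ct(\mu)+(2s-2)|\mu|$, so the $q$-exponent $2\sum a(a-1)$ contributes $q^{4\ct(\mu)+4(s-1)|\mu|}$.
\end{itemize}
The powers $q^{\pm4(s-1)|\mu|}$ cancel, leaving exactly $A^{2|\mu|}q^{4\ct(\mu)}=\Theta_\mu$. Finally, conjugation by $Y$ multiplies the $(\lambda,\mu)$ entry by $y^{|\lambda|-|\mu|}=q^{(1-s)(|\lambda|-|\mu|)}$. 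This is the prefactor in \eqref{eq:B}, and it proves \eqref{eq:compound}.

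\emph{Proof of \eqref{eq:powers}.} The matrix $b$ is lower triangular with diagonal entries $\theta_c\neq0$, so it is invertible. The Cauchy--Binet formula gives $C_s(b)^m=C_s(b^m)$ for every $m\in\Z$; negative $m$ is covered because $C_s(b^{-1})=C_s(b)^{-1}$. Conjugation by $Y$ commutes with taking powers, so \eqref{eq:powers} follows immediately from \eqref{eq:compound}.

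\emph{The first column \eqref{eq:B-first}.} Here $\mu=\vac$, so $\Theta_\vac=1$ and $I(\vac)=I_0$. Put $Q=q^2$ and $a_i=\lambda_i+s-i$. We need
\[
 q^{(1-s)|\lambda|}\det\Bigl(\QB{a_i}{s-j}\Bigr)_{i,j}
 =s_\lambda(q^{s-1},\ldots,q^{1-s}).
\]
The right side equals $q^{(1-s)|\lambda|}s_\lambda(1,Q,\ldots,Q^{s-1})$ by homogeneity of $s_\lambda$. So the claim reduces to
\[
 \det\Bigl(\QB{a_i}{k}\Bigr)_{i,\;0\le k\le s-1}=s_\lambda(1,Q,\ldots,Q^{s-1}),
\]
where the columns are reordered to increasing $k$; this changes the determinant by the same sign on every term. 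I would proceed as follows.
\begin{enumerate}
 \item Write $\QB ak$ as a polynomial of degree $k$ in $x=Q^a$, with leading coefficient
 \[
 (-1)^kQ^{-\binom k2}\Big/\prod_{j=1}^k(1-Q^j).
 \]
 \item Use column operations to reduce the determinant to these leading coefficients times the Vandermonde $\prod_{i<j}(x_j-x_i)$ in $x_i=Q^{a_i}$.
 \item Compare with the bialternant expression
 \[
 s_\lambda(1,\ldots,Q^{s-1})=\frac{\prod_{i<j}(Q^{a_i}-Q^{a_j})}{\prod_{i<j}(Q^{s-i}-Q^{s-j})}.
 \]
\end{enumerate}
The denominator $\prod_{i<j}(Q^{s-i}-Q^{s-j})$ must equal, up to the same sign, the reciprocal of $\prod_k(-1)^kQ^{-\binom k2}/\prod_{j\le k}(1-Q^j)$. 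This is a $q$-Vandermonde evaluation, since it is the $\lambda=\vac$ case.

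\emph{Main obstacle.} I expect the hard part to be the sign and $Q$-power bookkeeping in this last step, including the reversal of the column order. To avoid tracking signs by hand, I would fix them by checking that both sides equal $1$ at $\lambda=\vac$, and that both sides are ratios of the same alternant.
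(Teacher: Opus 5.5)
Your proposal is correct and follows the paper's proof essentially step for step. You factor the column factors $\theta_c$ out of the minor and match $\eta_s\prod_{c\in I(\mu)}\theta_c$ with $\Theta_\mu$ via the particle sums. You then use multiplicativity of compound matrices for all integral powers. Finally, you obtain the first column as a ratio of alternants of degree-$k$ polynomials in $q^{2a}$, normalized by the unitriangular determinant at $\vac$, as the paper does by dividing by the alternant at $I_0$.

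The only slip is cosmetic. Reversing the column order alone introduces the uniform sign $(-1)^{\binom{s}{2}}$, so your reduced identity holds only up to that sign. Your normalization at $\lambda=\vac$ in the original ordering, where the determinant equals $1$, absorbs this sign.
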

\begin{proof}
The particle statistics give
\begin{equation}\label{eq:particle-theta}
 \Theta_\mu=\eta_s\prod_{a\in I(\mu)}\theta_a.
\end{equation}
Indeed, $\sum_{a\in I(\mu)}a-\sum_{a\in I_0}a=|\mu|$, and
\[
 \sum_{a\in I(\mu)}a(a-1)-\sum_{a\in I_0}a(a-1)
 =2\ct(\mu)+2(s-1)|\mu|.
\]
Substituting $A'=Aq^{-2(s-1)}$ proves \eqref{eq:particle-theta}. Factoring the $\theta$'s from the columns of the minor gives \eqref{eq:compound}. Multiplicativity of exterior powers proves \eqref{eq:powers}, including inverse powers.

For the first column, regard $\Gb aj$, $0\leq j<s$, as a polynomial in $q^{2a}$ of degree $j$. Its leading coefficient is independent of $a$. Divide its alternant at the particle set $I(\lambda)$ by its alternant at $I_0$. The latter determinant is one. The quotient of the two Vandermondes is
\[
 \prod_{i<j}\frac{q^{2a_j}-q^{2a_i}}{q^{2(j-1)}-q^{2(i-1)}}
 =q^{(s-1)|\lambda|}\prod_{i<j}\frac{[a_j-a_i]_q}{[j-i]_q}
 =q^{(s-1)|\lambda|}D_\lambda^{(s)},
\]
where $a_1<\cdots<a_s$ list $I(\lambda)$. The prefactor in \eqref{eq:B} cancels this monomial, giving \eqref{eq:B-first}.
\end{proof}

\begin{proposition}\label{prop:height}
For a fixed partition $\lambda$ and every $m\in\Z$, the normalized ratio
\[
 \frac{((B^{(s)})^m)_{\lambda\vac}}{B^{(s)}_{\lambda\vac}}
\]
is independent of the ambient height $s\geq\max\{1,\ell(\lambda)\}$ and of any containing width.
\end{proposition}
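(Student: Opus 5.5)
The width independence is formal, so the plan is to dispose of it first and then reduce the height statement to a comparison of the one-particle matrices $b$ at heights $s$ and $s+1$, using the exterior-power description of Proposition~\ref{prop:compound}.

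\emph{Width.} By \eqref{eq:B}, $B^{(s)}_{\lambda\mu}=0$ unless $\mu\subseteq\lambda$. Hence in the expansion of $((B^{(s)})^m)_{\lambda\vac}$ for $m\geq0$, every intermediate index is a partition $\vac\subseteq\mu\subseteq\lambda$. All such $\mu$ lie in every rectangle containing $\lambda$, and the entries of $B^{(s)}$ do not involve $r$. The same holds for $m<0$, since the inverse of a matrix triangular for the containment order is again triangular, with entries given by the same finite formula. Thus the numerator and the denominator $B^{(s)}_{\lambda\vac}=D^{(s)}_\lambda$ (by \eqref{eq:B-first}) are independent of the containing width.

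\emph{Height: reduction to $b$.} It suffices to compare $s$ with $s+1$ for $\ell(\lambda)\leq s$, padding $\lambda$ by a zero. By \eqref{eq:powers} the ratio equals
\[
 \eta_s^{m-1}\frac{\det\bigl((b^m)_{I(\lambda),I_0}\bigr)}{\det\bigl(b_{I(\lambda),I_0}\bigr)}.
\]
At height $s+1$ the index sets become $\{0\}\cup(I(\lambda)+1)$ and $\{0\}\cup(I_0+1)$. Since $\theta_0=1$ and $\Gb{0}{c}=\delta_{0c}$, the matrix $b^{(s+1)}$ has first row $e_0^T$, and so does every power. Expanding the minors along row $0$ reduces them to minors of the powers of the lower-right block $b^{(s+1)}_{\geq1}$ on rows $I(\lambda)+1$ and columns $I_0+1$. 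The eigenvalue computation from \eqref{eq:particle-data}, with $A'$ replaced by $A'q^{-2}$, gives
\[
 \theta^{(s+1)}_{a+1}=\bigl(A'^2q^{-4}\bigr)\,\theta^{(s)}_a.
\]
So after the shift $a\mapsto a-1$, the block $b^{(s+1)}_{\geq1}$ is the scalar $A'^2q^{-4}$ times a lower-triangular matrix $X$ with the same distinct diagonal as $b^{(s)}$. This scalar is the ratio $\eta_s/\eta_{s+1}$, so it cancels the change in the prefactor $\eta^{m-1}$.

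\emph{Height: comparing $X$ with $b^{(s)}$.} It remains to compare $\det((X^m)_{I,I_0})/\det(X_{I,I_0})$ with the same expression for $b^{(s)}$. Let $G_Q=(\QB ac)$ be the $Q$-Pascal matrix with $Q=q^2$, so that $b=G_Q\,\diag(\theta)$. I would use the $q$-Pascal recursion
\[
 \QB{a+1}{c+1}=\sum_{j=c}^{a}Q^{(c+1)(a-j)}\QB{j}{c}
\]
to write the shifted block of $G_Q$ as $J\,G_Q$, where $J$ is an explicit lower unitriangular Toeplitz-type matrix in powers of $Q$. This gives $X=J\,b^{(s)}$ up to a diagonal conjugation $Y'$ analogous to $Y$ in \eqref{eq:compound}.

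The main obstacle is to show that this factor $J$ does not affect the normalized initial-column minors of all powers. The first approach I would try is an intertwiner: find a lower-triangular $W$ with $X=Wb^{(s)}W^{-1}$ whose compound $C_s(W)$ acts on the column $I_0$ only by a scalar, for instance because $W$ is block upper-triangular with respect to the splitting $I_0\sqcup I_0^c$. If $C_s(W)$ is lower triangular with $C_s(W)e_{I_0}=w\,e_{I_0}$ and the row side contributes only a diagonal factor, that factor cancels in the ratio.

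If no such $W$ can be found directly, the fallback is an eigenvector expansion. Write the column $I_0$ of $C_s(b^m)$ as $\sum_J c_J\theta_J^m v_J$ over the eigenbasis of $C_s(b)$, and check through the explicit $q$-Pascal eigenvectors that the coefficients for $X$ and for $b^{(s)}$ agree up to a common factor, independent of $J$, on rows $I(\lambda)$. This last verification is where I expect the actual work to lie.
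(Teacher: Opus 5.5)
Your width argument matches the paper's, and your height reduction is correct. By \eqref{eq:powers} the ratio is $\eta_s^{m-1}\det((b^m)_{I(\lambda),I_0})/\det(b_{I(\lambda),I_0})$. Row $0$ of every power of $b^{(s+1)}$ is $e_0^T$. The lower-right block of $(b^{(s+1)})^m$ is the $m$th power of the lower-right block. Finally, $\theta^{(s+1)}_{a+1}=A'^2q^{-4}\theta^{(s)}_a$. This is the paper's exact-height filtration written at the level of $b$ (compare \eqref{eq:extension}). One slip: $\eta_s/\eta_{s+1}=(A'^2q^{-4})^{s}$, not $A'^2q^{-4}$. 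The cancellation still works, because the scalar enters the $s\times s$ minors of the $m$th and first powers as $(A'^2q^{-4})^{sm}$ and $(A'^2q^{-4})^{s}$.

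The gap is the step you call ``the main obstacle.'' You never show that $X$, with $X_{ac}=\theta^{(s)}_c\QB{a+1}{c+1}$, has the same normalized initial-column minors of all powers as $b^{(s)}$, and the route you propose does not get there. The additive Pascal recursion gives at best a left factor $X=Jb^{(s)}$, and a left factor does not pass to powers. As you quote it, the recursion does not even give a column-independent $J$, since its coefficients $Q^{(c+1)(a-j)}$ depend on $c$. The eigenvector fallback is left unverified. The missing idea is the multiplicative identity
\[
 \QB{a+1}{c+1}=\frac{1-Q^{a+1}}{1-Q^{c+1}}\QB{a}{c},\qquad Q=q^2,
\]
which says precisely that $X=\Phi\, b^{(s)}\Phi^{-1}$ with $\Phi=\diag(1-q^{2(a+1)})_{a\geq0}$. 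Hence $X^m=\Phi\,(b^{(s)})^m\Phi^{-1}$ for every $m\in\Z$, and
\[
 \det\bigl((X^m)_{I(\lambda),I_0}\bigr)
 =\frac{\prod_{a\in I(\lambda)}(1-q^{2(a+1)})}{\prod_{c\in I_0}(1-q^{2(c+1)})}\,
 \det\bigl(((b^{(s)})^m)_{I(\lambda),I_0}\bigr).
\]
The prefactor is independent of $m$, so it cancels against the case $m=1$. This is exactly the paper's proof. The factors $1-q^{2(a_i+1)}$ in its gauge $g_{s-1}$ in \eqref{eq:height} are this $\Phi$, and the monomials $q^{-a_i}$ come from the change of $y$ between heights. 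Your intertwiner approach could succeed only by finding this $\Phi$. Since $b^{(s)}$ has distinct eigenvalues, every $W$ with $X=Wb^{(s)}W^{-1}$ has the form $\Phi\,p(b^{(s)})$.
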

\begin{proof}
The width statement follows from triangularity: all intermediate partitions in an entry lie in the finite interval $[\vac,\lambda]$. This holds for inverse matrices as well, by the triangular inverse recursion.

For the height step, suppose $\lambda_s=\mu_s=0$ and let
$a_i=\lambda_i+s-1-i$ for $1\leq i<s$. Define
\[
 g_{s-1}(\lambda)=\prod_{i=1}^{s-1}q^{-a_i}(1-q^{2(a_i+1)}).
\]
In the height-$s$ Gaussian minor, the row indexed by $0$ has a single nonzero entry, in the column indexed by $0$. Expanding along this row removes both indices. For each remaining entry use
\[
 \Gb{a+1}{c+1}=\frac{1-q^{2(a+1)}}{1-q^{2(c+1)}}\Gb ac.
\]
Comparison of the prefactors in \eqref{eq:B} gives
\begin{equation}\label{eq:height}
 B^{(s)}_{(\lambda,0),(\mu,0)}
 =\frac{g_{s-1}(\lambda)}{g_{s-1}(\mu)}B^{(s-1)}_{\lambda\mu}.
\end{equation}
In a finite rectangular truncation, the states of exact height $s$ span an invariant subspace, and \eqref{eq:height} describes the induced operator on the lower-height quotient. The same diagonal conjugacy holds for every integral power. Its first-column gauge factor cancels upon division by the first column of $B$, proving the assertion by induction.
\end{proof}

\begin{proposition}\label{prop:mirror}
Let an overline mean substitution $(A,q)\mapsto(A^{-1},q^{-1})$, and let $T_0=\diag(t_\lambda)$. Then
\begin{equation}\label{eq:mirror}
 \overline{B^{(s)}}=T_0(B^{(s)})^{-1}T_0^{-1}.
\end{equation}
For every $m\ne0$,
\begin{equation}\label{eq:all-twists}
 \Tw_\lambda^{(m)}=t_\lambda
 \frac{((B^{(s)})^m)_{\lambda\vac}}{D_\lambda^{(s)}}.
\end{equation}
\end{proposition}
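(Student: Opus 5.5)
The plan is to reduce \eqref{eq:mirror} to a one-row statement for the lower-triangular matrix $b$, and then lift it to the exterior power using Proposition~\ref{prop:compound}.

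\emph{Step 1: the one-row identity.} Overline sends $A'=Aq^{2(1-s)}$ to $(A')^{-1}$ and $\theta_a$ to $\theta_a^{-1}$. It also sends $\Gb ac$ to $q^{-2c(a-c)}\Gb ac$, by \eqref{eq:binomial} and the $q$-inversion invariance of $\qb ac$. Hence
\[
 \bar b_{ac}=\theta_c^{-1}q^{-2c(a-c)}\Gb ac .
\]
Gaussian inversion gives
\[
 (b^{-1})_{ac}=\theta_a^{-1}(-1)^{a-c}q^{(a-c)(a-c-1)}\Gb ac .
\]
So we aim for a diagonal $\tau$ with $\bar b=\tau b^{-1}\tau^{-1}$. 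Comparing entries, $\tau_a/\tau_c$ must equal $(-1)^{a-c}\theta_c\theta_a^{-1}q^{-2c(a-c)-(a-c)(a-c-1)}$. Because this ratio factors as $g(a)/g(c)$, the choice
\[
 \tau_a=(-1)^a\theta_a^{-1}q^{-a(a-1)}=(-1)^a (A')^{-2a}q^{-3a(a-1)}
\]
will work. I will check this ratio identity carefully, since the sign of the quadratic exponent is the place errors creep in.

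\emph{Step 2: lift to $C_s$.} Apply $C_s$, which is multiplicative and sends diagonal matrices to diagonal ones. This gives $C_s(\bar b)=C_s(\tau)C_s(b)^{-1}C_s(\tau)^{-1}$. Conjugate by $Y$, which is diagonal and hence commutes with $C_s(\tau)$; overline sends $y$ to $y^{-1}$, so $\bar Y=Y^{-1}$. Use also $\bar\eta_s=\eta_s^{-1}$. Then \eqref{eq:compound} gives
\[
 \overline{B^{(s)}}=\eta_s^{-1}Y^{-1}C_s(\bar b)Y=T\,(B^{(s)})^{-1}T^{-1},
\]
where $T=Y^{-2}C_s(\tau)$ is diagonal, up to a scalar that cancels.

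The main obstacle is to identify this $T$ with $T_0$ up to a global scalar, i.e.\ to show $T_{\lambda\lambda}/T_{\vac\vac}=t_\lambda$. Here $C_s(\tau)_{\lambda\lambda}=\prod_{a\in I(\lambda)}\tau_a$. The particle statistics in the proof of \eqref{eq:particle-theta} give
\[
 \sum_{a\in I(\lambda)} a-\sum_{a\in I_0}a=|\lambda|,\qquad \sum_{a\in I(\lambda)} a(a-1)-\sum_{a\in I_0}a(a-1)=2\ct(\lambda)+2(s-1)|\lambda| .
\]
Inserting $A'$ and $y^{-2|\lambda|}=q^{2(s-1)|\lambda|}$, the ratio should come out as $(-1)^{|\lambda|}A^{-2|\lambda|}q^{-2\ct(\lambda)}$ times $(s-1)$-dependent powers that cancel. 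If the exponents instead give $\bar t_\lambda=t_\lambda^{-1}$, I will rewrite the identity as $\overline{B}=T_0^{-1}B^{-1}T_0$ and then invert it. The two forms are equivalent after applying overline, because $\overline{T_0}=T_0^{-1}$. I expect the bookkeeping to force $T=T_0^{\pm1}$, and either sign suffices.

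\emph{Step 3: deduce \eqref{eq:all-twists}.} For $m>0$ the formula is just the definition \eqref{eq:kntz-twists}. For $-m<0$, raise \eqref{eq:mirror} to the $m$th power to get $\overline{(B^{(s)})^m}=T_0(B^{(s)})^{-m}T_0^{-1}$. Taking the $(\lambda,\vac)$ entry, with $t_\vac=1$, gives
\[
 \overline{((B^{(s)})^m)_{\lambda\vac}}=t_\lambda\,((B^{(s)})^{-m})_{\lambda\vac}.
\]
Divide by $\overline{D^{(s)}_\lambda}=D^{(s)}_\lambda$, using that $D^{(s)}_\lambda$ is invariant under $q\mapsto q^{-1}$ and independent of $A$. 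The result is $\Tw^{(-m)}_\lambda=F^{(m;s)}_\lambda(A^{-1},q^{-1})=t_\lambda((B^{(s)})^{-m})_{\lambda\vac}/D^{(s)}_\lambda$, as claimed.
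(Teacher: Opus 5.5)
Your strategy is the paper's own: Gaussian inversion for $b=P\theta$, conjugation of $\bar b$ to $b^{-1}$ by a diagonal matrix, lifting through $C_s$ and \eqref{eq:compound} with $\bar Y=Y^{-1}$ and $\bar\eta_s=\eta_s^{-1}$, particle statistics, and then $(\lambda,\vac)$ entries of powers. Step~3 is fine. Step~1, however, inverts the $\theta$-ratio, and the error is not in the quadratic exponent. From $\bar b_{ac}=\tau_a(b^{-1})_{ac}\tau_c^{-1}$ you get
\[
 \frac{\tau_a}{\tau_c}=\frac{\bar b_{ac}}{(b^{-1})_{ac}}
 =(-1)^{a-c}\,\theta_a\theta_c^{-1}\,q^{-a(a-1)+c(c-1)} .
\]
So the ratio involves $\theta_a\theta_c^{-1}$, not $\theta_c\theta_a^{-1}$. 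The correct diagonal is $\tau_a=(-1)^a\theta_aq^{-a(a-1)}=(-A'^2)^aq^{a(a-1)}$, which is exactly the one the paper uses.

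Your choice $\tau_a=(-1)^a\theta_a^{-1}q^{-a(a-1)}$ fails in two ways. It gives $\tau_a(b^{-1})_{ac}\tau_c^{-1}=\theta_a^{-2}\theta_c\,q^{-2c(a-c)}\Gb ac\neq\bar b_{ac}$. Its particle product would also give $T_{\lambda\lambda}/T_{\vac\vac}=(-1)^{|\lambda|}A^{-2|\lambda|}q^{-6\ct(\lambda)}$, which is neither $t_\lambda$ nor $t_\lambda^{-1}$. With the corrected $\tau$, your particle statistics give $\prod_{a\in I(\lambda)}\tau_a/\prod_{a\in I_0}\tau_a=t_\lambda y^{2|\lambda|}$. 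The factor $Y^{-2}$ in $T=Y^{-2}C_s(\tau)$ cancels $y^{2|\lambda|}$ exactly, so $T$ equals $T_0$ times the scalar $\prod_{a\in I_0}\tau_a$. The outcome is therefore $t_\lambda$, not the $t_\lambda^{-1}$ you anticipated.

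Your fallback that ``either sign suffices'' is false, and you should drop it. Apply the overline to $\bar B=T_0^{-1}B^{-1}T_0$ and use $\overline{T_0}=T_0^{-1}$. This gives $B=T_0\bar B^{-1}T_0^{-1}$, which after inversion is the same identity again. Each of the two conjugation identities is self-dual under the overline, and neither implies the other. They coincide only if $T_0^2$ commutes with $B^{-1}$, and that fails.

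Already for $r=s=1$ you can see the difference. Here $B^{(1)}=\begin{pmatrix}1&0\\1&A^2\end{pmatrix}$, $(B^{-1})_{(1)\vac}=-A^{-2}$, and $t_{(1)}=-A^2$. The $((1),\vac)$ entry of $T_0B^{-1}T_0^{-1}$ is $1=\bar B_{(1)\vac}$, whereas that of $T_0^{-1}B^{-1}T_0$ is $A^{-4}$. The sign also matters downstream: with $T_0^{-1}$, Step~3 would produce $t_\lambda^{-1}$ in \eqref{eq:all-twists}. Once $\tau$ is corrected, the computation lands directly on $T_0$ and no fallback is needed.
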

\begin{proof}
Let $P_{ac}=\Gb ac$. Gaussian inversion gives
\begin{equation}\label{eq:pascal-inverse}
 (P^{-1})_{ac}=(-1)^{a-c}q^{(a-c)(a-c-1)}\Gb ac.
\end{equation}
To verify the inverse, multiply by $P$ and use
\[
 \sum_{j=0}^k(-1)^jq^{j(j-1)}\Gb kj
 =\prod_{j=0}^{k-1}(1-q^{2j}).
\]
This equals zero for $k>0$ and one for $k=0$.

Set $t'_a=(-A'^2)^a q^{a(a-1)}$ and $T'=\diag(t'_a)$. Since $b=P\theta$, \eqref{eq:pascal-inverse} implies
\[
 \overline b=T'b^{-1}(T')^{-1}.
\]
Entrywise this is the identity
\[
 \frac{t'_a}{t'_c}\theta_a^{-1}(-1)^{a-c}q^{(a-c)(a-c-1)}
 =\theta_c^{-1}q^{-2c(a-c)}.
\]
The remaining factor is $\Gb ac$, while
$\overline{\Gb ac}=q^{-2c(a-c)}\Gb ac$.
The particle product is
\begin{equation}\label{eq:t-particle}
 \frac{\prod_{a\in I(\lambda)}t'_a}{\prod_{a\in I_0}t'_a}
 =t_\lambda y^{2|\lambda|}.
\end{equation}
Using \eqref{eq:compound}, $\overline Y=Y^{-1}$, and $\overline\eta_s=\eta_s^{-1}$, the factors $y^{2|\lambda|}$ in \eqref{eq:t-particle} give precisely \eqref{eq:mirror}.

For $m=-k<0$, the definition in \eqref{eq:kntz-twists} and the $q\leftrightarrow q^{-1}$ invariance of $D_\lambda^{(s)}$ give
\[
 \Tw_\lambda^{(-k)}=\frac{(\overline B^{\,k})_{\lambda\vac}}{D_\lambda^{(s)}}
 =t_\lambda\frac{(B^{-k})_{\lambda\vac}}{D_\lambda^{(s)}}.
\]
This proves \eqref{eq:all-twists}; its positive case is the definition. Taking $k=1$ also recovers $\Tw_\lambda^{(-1)}=1$.
\end{proof}

Write $\cK_{r,s}^{m,n}$ for the sum of \eqref{eq:main}. Substitution of \eqref{eq:all-twists} into the right-hand side of \eqref{eq:main} gives
\begin{equation}\label{eq:shape-bilinear}
 \cK_{r,s}^{m,n}=\sum_{\lambda\subseteq(r^s)}
 W_\lambda(B^m)_{\lambda\vac}(B^n)_{\lambda\vac},
\end{equation}
where
\begin{equation}\label{eq:shape-weight}
 W_\lambda=t_\lambda\frac{D_{\lambda^T}^{(r)}(q^{-1})}{D_\lambda^{(s)}(q)}
 \prod_{\square\in\lambda}D_{r+c(\square)}D_{c(\square)-s}.
\end{equation}
Define one-particle weights by
\begin{equation}\label{eq:one-weight}
 w_a=(-A'^2)^a q^{a(a-1)}\qb da
 \prod_{k=1}^a\{A'q^{d+k-1}\}\{A'q^{k-2}\},\qquad
 W=\diag(w_0,\ldots,w_d).
\end{equation}
All these weights are nonzero in $\Q(A,q)$ and at the stable-rank specializations used in the proof.

\begin{lemma}\label{lem:weight}
For $\lambda\subseteq(r^s)$,
\begin{equation}\label{eq:weight-factor}
 W_\lambda y^{2|\lambda|}
 =\frac{\prod_{a\in I(\lambda)}w_a}{\prod_{a\in I_0}w_a}.
\end{equation}
\end{lemma}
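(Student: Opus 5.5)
The identity \eqref{eq:weight-factor} is multiplicative, so the plan is to split both sides into three factors and match them one at a time: the monomial $t_\lambda y^{2|\lambda|}$, the box product of $D$'s, and the ratio of principal specializations. Each factor of $w_a$ will be matched against one factor of $W_\lambda$. Throughout, write $a_i=\lambda_i+s-i$ for the elements of $I(\lambda)$ and $s-i$ for the corresponding elements of $I_0$. Every ratio over particles then becomes a product over rows $i$ of a telescoping product over $j=1,\ldots,\lambda_i$, that is, over the boxes $(i,j)$ of row $i$.

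\emph{Monomial factor.} The factor $(-A'^2)^aq^{a(a-1)}$ in $w_a$ is exactly $t'_a$ from the proof of Proposition~\ref{prop:mirror}. By \eqref{eq:t-particle}, its particle ratio equals $t_\lambda y^{2|\lambda|}$. This is precisely the monomial part of the left side of \eqref{eq:weight-factor}.

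\emph{Box factor.} Substituting $A'=Aq^{2(1-s)}$ and $d=r+s-1$ gives
\[
 \{A'q^{d+k-1}\}=D_{r-s+k},\qquad \{A'q^{k-2}\}=D_{k-2s}.
\]
For row $i$, the ratio of the products over $k\leq a_i$ and over $k\leq s-i$ is the product over $s-i<k\leq a_i$. Reindexing by $k=j+s-i$ with $1\leq j\leq\lambda_i$ turns the two factors into $D_{r+j-i}D_{j-i-s}$. Since $c(\square)=j-i$ for $\square=(i,j)$, this is $D_{r+c(\square)}D_{c(\square)-s}$. Hence the particle ratio of the box parts of $w_a$ is $\prod_{\square\in\lambda}D_{r+c(\square)}D_{c(\square)-s}$.

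\emph{Binomial factor.} It remains to prove
\[
 \frac{\prod_{a\in I(\lambda)}\qb da}{\prod_{a\in I_0}\qb da}
 =\frac{D_{\lambda^T}^{(r)}(q^{-1})}{D_\lambda^{(s)}(q)}.
\]
On the left, the factors $[d]_q!$ cancel, leaving $\prod_{I_0}[a]_q![d-a]_q!\big/\prod_{I(\lambda)}[a]_q![d-a]_q!$. On the right, apply the hook-content formula \eqref{eq:small-dimension} to both $D_\lambda^{(s)}$ and $D^{(r)}_{\lambda^T}$; the second is written as in the proof of Lemma~\ref{lem:content}, and it is $q$-inversion invariant. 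The hook products coincide and cancel, so the right side equals $\prod_{\square}[r-c(\square)]_q\big/\prod_\square[s+c(\square)]_q$.

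Now compute both box products row by row. In row $i$, the values $s+c$ run over $s-i+1,\ldots,a_i$, so their product is $[a_i]_q!/[s-i]_q!$. The values $r-c=r+i-j$ run from $r+i-1$ down to $r+i-\lambda_i$. Since $d-(s-i)=r+i-1$ and $d-a_i=r+i-1-\lambda_i$, their product is $[d-(s-i)]_q!/[d-a_i]_q!$. Taking the quotient and multiplying over $i$ reproduces the left side exactly.

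Multiplying the three matched factors gives \eqref{eq:weight-factor}. The step needing the most care is the binomial factor. There one must check that the endpoints of the telescoping ranges correspond exactly to the index sets $I(\lambda)$ and $I_0$. One must also check that the $q\mapsto q^{-1}$ in $D^{(r)}_{\lambda^T}(q^{-1})$ is harmless, which holds because symmetric quantum integers are inversion invariant. Otherwise the argument is bookkeeping with no genuine obstacle.
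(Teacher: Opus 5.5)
Your proposal is correct and follows the paper's proof essentially step for step. The paper likewise cancels the hook factors to reduce the dimension ratio to $\prod_{\square}[r-c(\square)]_q/[s+c(\square)]_q$, telescopes each row with $j_0=s-i$ to obtain $\qb{d}{j_0+\lambda_i}/\qb{d}{j_0}$, matches the two differential factors of $w_a$ with $D_{r+c(\square)}D_{c(\square)-s}$ via $a=s-i+k$, and takes the monomial and sign from \eqref{eq:t-particle}; your endpoint checks $d-(s-i)=r+i-1$ and $d-a_i=r+i-1-\lambda_i$ are exactly the bookkeeping this requires.
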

\begin{proof}
The hook factors cancel between the two quantum dimensions, leaving
\[
 \frac{D_{\lambda^T}^{(r)}(q^{-1})}{D_\lambda^{(s)}(q)}
 =\prod_{(i,k)\in\lambda}\frac{[r+i-k]_q}{[s+k-i]_q}.
\]
For row $i$, put $j_0=s-i$. The successive particle values in that row are $a=j_0+1,\ldots,j_0+\lambda_i$. The row's dimension contribution is
\[
 \prod_{a=j_0+1}^{j_0+\lambda_i}\frac{[d+1-a]_q}{[a]_q}
 =\frac{\qb d{j_0+\lambda_i}}{\qb d{j_0}}.
\]
For the box $(i,k)$, with $a=s-i+k$, the two differential arguments satisfy
\[
 A'q^{d+a-1}=Aq^{r+k-i},\qquad
 A'q^{a-2}=Aq^{k-i-s}.
\]
Thus their products are exactly the corresponding interval factors in
$w_{j_0+\lambda_i}/w_{j_0}$. Finally, \eqref{eq:t-particle} supplies the monomial and sign. Multiplication over the rows proves \eqref{eq:weight-factor}.
\end{proof}

\begin{theorem}\label{thm:shape-determinant}
For every pair of nonzero integers $m,n$,
\begin{equation}\label{eq:K-det}
 \cK_{r,s}^{m,n}(A,q)=\eta_s^{m+n}
 \frac{\det\bigl(((b^m)^TWb^n)_{I_0,I_0}\bigr)}
 {\prod_{a=0}^{s-1}w_a}.
\end{equation}
\end{theorem}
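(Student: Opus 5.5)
Starting from the bilinear form \eqref{eq:shape-bilinear}, I would substitute the compound-matrix description of powers of $B^{(s)}$ from Proposition~\ref{prop:compound}. By \eqref{eq:powers},
\[
 (B^m)_{\lambda\vac}(B^n)_{\lambda\vac}
 =\eta_s^{m+n}y^{2|\lambda|}
 \det\bigl((b^m)_{I(\lambda),I_0}\bigr)\det\bigl((b^n)_{I(\lambda),I_0}\bigr),
\]
since $|\vac|=0$. For negative $m$ or $n$ this still holds, because \eqref{eq:powers} covers all integral powers and $b$ is lower triangular and invertible over $\Q(A,q)$. The factor $y^{2|\lambda|}$ is exactly what Lemma~\ref{lem:weight} absorbs. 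Multiplying by $W_\lambda$ and applying \eqref{eq:weight-factor} turns the summand of \eqref{eq:shape-bilinear} into
\[
 \frac{\eta_s^{m+n}}{\prod_{a\in I_0}w_a}\,
 \det\bigl(((b^m)^T)_{I_0,I(\lambda)}\bigr)
 \Bigl(\prod_{a\in I(\lambda)}w_a\Bigr)
 \det\bigl((b^n)_{I(\lambda),I_0}\bigr).
\]

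Next I would identify the middle product with $\det(W_{I(\lambda),I(\lambda)})$, since $W$ is diagonal. The map $\lambda\mapsto I(\lambda)$ is a bijection from $\cP_{r,s}$ onto the $s$-element subsets of $\{0,\ldots,d\}$, where $d=r+s-1$. Summing over $\lambda$ is therefore summing over all increasing index sets $J$ of size $s$. The Cauchy--Binet formula, applied twice (or once to the triple product $(b^m)^T\cdot W\cdot b^n$, with $W$ diagonal so only diagonal minors survive), gives
\[
 \sum_{J}\det\bigl(((b^m)^T)_{I_0,J}\bigr)\det(W_{J,J})\det\bigl((b^n)_{J,I_0}\bigr)
 =\det\bigl(((b^m)^TWb^n)_{I_0,I_0}\bigr).
\]
This is \eqref{eq:K-det}. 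The transposition is harmless because $\det((b^m)_{J,I_0})=\det(((b^m)^T)_{I_0,J})$. Since index sets are listed increasingly throughout, no reordering signs arise.

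The only step needing care is the matching of ranges and normalizations. First, every $\lambda$ counted in \eqref{eq:main} must correspond to a subset $J$, and conversely. Second, partitions whose $W_\lambda$ or Gaussian minors vanish must not break the bijection; they simply contribute zero on both sides. Third, the rational functions must be treated identically for positive and negative exponents. I expect the main obstacle to be confirming that Lemma~\ref{lem:weight} and \eqref{eq:powers} use the same $y$-convention, so that the powers $y^{\pm|\lambda|}$ cancel exactly, leaving only $\eta_s^{m+n}$ and the vacuum normalization $\prod_{a\in I_0}w_a$. I would check this on the case $\lambda=\vac$, where both sides reduce to the $I_0$ term, before trusting the general bookkeeping.
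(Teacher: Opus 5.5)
Your proposal is correct and is essentially the paper's own proof. You substitute \eqref{eq:powers} at $\mu=\vac$ into \eqref{eq:shape-bilinear}, absorb $W_\lambda y^{2|\lambda|}$ using Lemma~\ref{lem:weight}, and apply Cauchy--Binet to $((b^m)^T)_{I_0,\cdot}\,W\,(b^n)_{\cdot,I_0}$ through the bijection $\lambda\mapsto I(\lambda)$ onto $s$-subsets of $\{0,\ldots,d\}$; the $y$-convention you flag does match, since both use $y=q^{1-s}$.
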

\begin{proof}
Insert \eqref{eq:powers} into \eqref{eq:shape-bilinear} and then apply \eqref{eq:weight-factor}. The result is
\[
 \frac{\eta_s^{m+n}}{\prod_{a\in I_0}w_a}
 \sum_{|I|=s}\left(\prod_{a\in I}w_a\right)
 \det((b^m)_{I,I_0})\det((b^n)_{I,I_0}).
\]
Cauchy--Binet identifies this sum with the initial principal minor of $(b^m)^TWb^n$.
\end{proof}

\subsection{The one-row identity and initial principal minors}\label{sec:moments}
Specialize $A'=q^{N'}$, where $N'\geq2$, and let $S$ be the one-row Racah matrix normalized by \eqref{eq:S-properties}. We first translate the skein formula of \cite[Theorem 1.2]{CLZ} into a moment identity for $b$.

\begin{proposition}\label{prop:seed-moment}
For all positive integers $m,n$,
\begin{equation}\label{eq:seed-moments}
 e_0^T(b^m)^TWb^ne_0
 =\gamma(S\theta^mS\theta^nS)_{00},\qquad
 \gamma=\dim_q[d]=1/S_{00}.
\end{equation}
\end{proposition}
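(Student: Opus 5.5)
The identity \eqref{eq:seed-moments} should be the height-one case of the equality between the two determinant formulas already derived, Theorem~\ref{thm:shape-determinant} and Theorem~\ref{thm:rt-determinant}. The one-row double-twist formula of \cite[Theorem~1.2]{CLZ} supplies the link between them. So I would not compute either side directly. Instead I would specialize both determinant formulas to $s=1$ and width $d$, and identify each side with a rational function that CLZ prove equal. Note that at $s=1$ we have $d=r$, $A'=A$, $y=1$, and $I_0=\{0\}$. Also $\eta_1=\theta_0^{-1}=1$ and $w_0=1$. The matrices $b$, $\theta$, $W$ in \eqref{eq:bY} and \eqref{eq:one-weight} are exactly the ones appearing in \eqref{eq:seed-moments}.

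\emph{Step 1 (formula side).} Apply Theorem~\ref{thm:shape-determinant} with $s=1$ and $r=d$, for positive $m,n$. All scalar prefactors are one, so it gives
\[
 \cK_{d,1}^{m,n}(A,q)=e_0^T(b^m)^TWb^ne_0 .
\]
The left side is the one-row KNTZ sum \eqref{eq:main} over $\lambda=(k)$, $0\le k\le d$. In it the twist factors are $\Tw^{(m)}_{(k)}=t_{(k)}((B^{(1)})^m)_{(k)\vac}/D^{(1)}_{(k)}$, with $D^{(1)}_{(k)}=1$. The kernel $\Psi_{(k);d,1}$ reduces to $Z^+_{d,k}\prod_{i<k}D_{i-1}$.

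\emph{Step 2 (topological side).} Apply Theorem~\ref{thm:rt-determinant} with $s=1$, rank $N=N'\ge2=2s$, and color $(d)$. Then $N'$ and the one-row data coincide with those of the rectangle, and $C_1$ is the identity. The theorem gives
\[
 H_{(d)}(K_{m,n};q^{N'},q)=\frac{(S\theta^mS\theta^nS)_{00}}{S_{00}}
 =\gamma\,(S\theta^mS\theta^nS)_{00}.
\]

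\emph{Step 3 (one-row input).} Invoke \cite[Theorem~1.2]{CLZ}: for $m,n>0$ it states $H_{(d)}(K_{m,n};A,q)=\cK^{m,n}_{d,1}(A,q)$ as rational functions. Specializing $A=q^{N'}$ and combining with Steps 1 and 2 yields \eqref{eq:seed-moments}.

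The main obstacle is Step 3's bookkeeping. One must check that the one-row case of the KNTZ sum, with the twist factors \eqref{eq:kntz-twists} and the kernel normalizations here, is literally the CLZ formula. The knot convention ($K_{1,1}$ left-handed trefoil, $K_{-1,1}$ figure-eight), the zero framing, and the variables $A,q$ must also agree. I would do this by comparing the CLZ twist coefficients entrywise with $t_{(k)}(b^m)_{k0}$; at $s=1$ these equal $(B^m)_{(k)\vac}$ by \eqref{eq:powers}. As an independent sanity check, I would verify the cases $m=n=1$ (trefoil) and the small widths $d=1,2$ against the known HOMFLY--PT values. Any residual sign or monomial discrepancy would be absorbed by the sign convention for $S$ fixed in \eqref{eq:S-properties}, which makes $S_{00}=1/\gamma$.
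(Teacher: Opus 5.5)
Your outline is the paper's: the right side is the one-row reduced closure, and the left side comes from the Chen--Liu--Zhu formula. Your Step~2 at $s=1$ is exactly Proposition~\ref{prop:RT-closure} together with \eqref{eq:matrix-closure}, since $N'=N$ and $\eta_1=1$. Your Step~1 is harmless but unnecessary. At $s=1$ one has $y=1$, $W_{(k)}=w_k$ and $(B^m)_{(k)\vac}=(b^m)_{k0}$, so $\cK^{m,n}_{d,1}=\sum_k w_k(b^m)_{k0}(b^n)_{k0}$ by definition, and no determinant identity is needed. There is no circularity in either step.

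\textbf{The gap is Step~3.} \cite[Theorem~1.2]{CLZ} does not literally state $H_{(d)}(K_{m,n})=\cK^{m,n}_{d,1}$. With $a=A'$, it gives the invariant as a sum over $k$ of
\[
(-1)^k(a^kq^{k(k-1)})^{2(m+n)}\widetilde C^{(m)}_{k,k}\widetilde C^{(n)}_{k,k}\qb dk\prod_{j=1}^k\{aq^{d+j-1}\}\{aq^{j-2}\}.
\]
Here $\widetilde C^{(p)}_{k,k}$ is a multinomial sum over compositions $\ell_1+\cdots+\ell_p=k$ with the quadratic exponent $\varphi(\ell)$. Identifying this sum with $\sum_k w_k(b^m)_{k0}(b^n)_{k0}$ is the entire content of the paper's proof, and you defer it as bookkeeping. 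What must be proved is
\[
(a^kq^{k(k-1)})^{2p}\,\widetilde C^{(p)}_{k,k}=(-1)^ka^kq^{k(k-1)/2}\,(b^p)_{k0}\qquad(p\geq1,\ k\geq0).
\]
The paper proves it by expanding $(b^p)_{k0}$ over descending paths $k=j_0\geq j_1\geq\cdots\geq j_p=0$ with $\ell_i=j_{i-1}-j_i$. It uses
\[
\prod_i\Gb{j_{i-1}}{j_i}=q^{\sum_{u<v}\ell_u\ell_v}[k]_q!\Big/\prod_i[\ell_i]_q!
\quad\text{and}\quad
\prod_i\theta_{j_i}=\prod_{i<p}a^{2j_i}q^{2j_i(j_i-1)}.
\]
It then matches the $a$- and $q$-exponents term by term, via $\sum(p-i)\ell_i=\sum s_i$ and $\sum s_i\ell_{i+1}=(k^2-\sum\ell_i^2)/2$. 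Without this computation, Step~3 is an unverified assertion. Sanity checks at $m=n=1$ or $d\leq2$ do not replace it.

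Two specifics of your plan for this step would also go wrong.

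\emph{The per-factor comparison.} Your entrywise comparison is with $\Tw^{(m)}_{(k)}=t_{(k)}(b^m)_{k0}$, but each CLZ twist coefficient equals $(-1)^ka^kq^{k(k-1)/2}(b^m)_{k0}$. These differ by a monomial per factor. Agreement appears only at the level of whole summands: the two CLZ factors times the outer sign $(-1)^k$ give $t_{(k)}(b^m)_{k0}(b^n)_{k0}=G^{m,n}_{(k)}$. The remaining kernel equals $Z^+_{d,k}\prod_{i<k}D_{i-1}=\Psi_{(k);d,1}$, so that $w_k=t_{(k)}\Psi_{(k);d,1}$.

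\emph{Absorbing discrepancies into $S$.} No residual sign or monomial discrepancy can be absorbed by the sign convention for $S$. The ratio $(S\theta^mS\theta^nS)_{00}/S_{00}$ is unchanged under $S\mapsto-S$, and the convention in \eqref{eq:S-properties} only ensures $1/S_{00}=\gamma$. Any mismatch in the conversion would make the proposition false, so the exponent identities have to be verified exactly.
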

\begin{proof}
The right-hand side is the RT one-row double-twist invariant, by Proposition~\ref{prop:RT-closure} and \eqref{eq:matrix-closure}. We obtain the left-hand side from the Chen--Liu--Zhu skein formula as follows.

Write $a=A'$ temporarily. For $p>0$, their coefficient \cite[equation (3.12)]{CLZ} is
\begin{equation}\label{eq:CLZ-coeff}
 \widetilde C_{k,k}^{(p)}
 =(-1)^k a^{-k}q^{-2k^2}
 \sum_{\ell_1+\cdots+\ell_p=k}
 a^{-2\sum_{i=1}^{p-1}(p-i)\ell_i}
 q^{\varphi(\ell)}\frac{[k]_q!}{\prod_{i=1}^p[\ell_i]_q!},
\end{equation}
where all $\ell_i$ are nonnegative, $s_i=\ell_1+\cdots+\ell_i$, and
\begin{equation}\label{eq:CLZ-phi}
 \varphi(\ell)=2\sum_{i=1}^{p-1}s_i(s_i-2k+1)
 +\frac12\sum_{i=1}^p\ell_i(\ell_i+3)
 +2\sum_{i=1}^{p-1}s_i\ell_{i+1}.
\end{equation}
In a path contributing to $(b^p)_{k0}$, put $j_0=k$, $j_i=k-s_i$, and $j_p=0$. Multiplication of its Gaussian binomials gives
\[
 \prod_{i=1}^p\Gb{j_{i-1}}{j_i}
 =q^{\sum_{u<v}\ell_u\ell_v}
 \frac{[k]_q!}{\prod_i[\ell_i]_q!}.
\]
The $a$- and $q$-exponents in \eqref{eq:CLZ-coeff} agree with this path by
\begin{align}
 2pk-2k-2\sum_{i=1}^{p-1}(p-i)\ell_i
 &=2\sum_{i=1}^{p-1}j_i,\label{eq:CLZ-A}\\
 2pk(k-1)-2k^2-\tfrac12k(k-1)+\varphi(\ell)
 &=2\sum_{i=1}^{p-1}j_i(j_i-1)+\sum_{u<v}\ell_u\ell_v.\label{eq:CLZ-q}
\end{align}
The first identity uses $\sum(p-i)\ell_i=\sum s_i$. For the second, substitute $j_i=k-s_i$ and use
$\sum s_i\ell_{i+1}=\sum_{u<v}\ell_u\ell_v=(k^2-\sum\ell_i^2)/2$.
Consequently,
\begin{equation}\label{eq:CLZ-conversion}
 (a^kq^{k(k-1)})^{2p}\widetilde C_{k,k}^{(p)}
 =(-1)^ka^kq^{k(k-1)/2}(b^p)_{k0}.
\end{equation}
This identity also holds at $k=0$.

Theorem 1.2 of \cite{CLZ} expresses the invariant with external color $[d]$ as the sum over $0\leq k\leq d$ of
\[
 (-1)^k(a^kq^{k(k-1)})^{2(m+n)}
 \widetilde C_{k,k}^{(m)}\widetilde C_{k,k}^{(n)}
 \qb dk\prod_{j=1}^k\{aq^{d+j-1}\}\{aq^{j-2}\}.
\]
Substituting \eqref{eq:CLZ-conversion} twice leaves
$w_k(b^m)_{k0}(b^n)_{k0}$. Summation proves \eqref{eq:seed-moments}.
\end{proof}

The following factorization converts the moment identity into a comparison of initial principal minors.

\begin{lemma}\label{lem:moment-lift}
Let $d\geq0$ and let $b$ be an invertible lower-triangular $(d+1)\times(d+1)$ matrix over a field of characteristic zero. Suppose its diagonal entries $\theta_0,\ldots,\theta_d$ are pairwise distinct, and set $\theta=\diag(\theta_0,\ldots,\theta_d)$. Let $W$ be an invertible diagonal matrix and let $S$ satisfy $S^T=S$, $S^2=I$, and $S_{0a}\ne0$ for every $a$. Assume that, for some $\gamma\ne0$,
\begin{equation}\label{eq:finite-moments}
 e_0^T(b^m)^TWb^ne_0
 =\gamma(S\theta^mS\theta^nS)_{00}
 \qquad(1\leq m,n\leq d+1).
\end{equation}
Then for every $1\leq s\leq d+1$ and every $m,n\in\Z$,
\begin{equation}\label{eq:minor-comparison}
 \frac{\det\bigl(((b^m)^TWb^n)_{I_0,I_0}\bigr)}
 {\prod_{a\in I_0}W_{aa}}
 =\frac{\det\bigl((S\theta^mS\theta^nS)_{I_0,I_0}\bigr)}
 {\det(S_{I_0,I_0})},\qquad I_0=\{0,\ldots,s-1\}.
\end{equation}
In particular, every denominator on the right is nonzero.
\end{lemma}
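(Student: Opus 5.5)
The plan is to turn the finite moment identity \eqref{eq:finite-moments} into a congruence of matrices and then take initial principal minors. Since $b$ is lower triangular with distinct diagonal entries $\theta_a$, we first diagonalize it: there is a unique unit lower-triangular $L$ with $b=L\theta L^{-1}$, because the eigenvector for $\theta_a$ can be normalized to have entry one at index $a$ and zeros above. Then $b^m=L\theta^mL^{-1}$ for every $m\in\Z$, and
\[
 (b^m)^TWb^n=(L^{-1})^T\theta^m\,(L^TWL)\,\theta^nL^{-1}.
\]
Put $G=L^TWL$, which is symmetric. The right side of \eqref{eq:finite-moments} reads $\gamma\sum_{a,c}S_{0a}S_{ac}S_{c0}\theta_a^m\theta_c^n$. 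The left side is $\sum_{a,c}u_a G_{ac}u_c\theta_a^m\theta_c^n$, where $u=L^{-1}e_0$; note $u_0=1$.

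The first key step is moment matching. The functions $(m,n)\mapsto\theta_a^m\theta_c^n$, restricted to $1\leq m,n\leq d+1$, are linearly independent. This holds because the $\theta$'s are distinct and nonzero, so the Vandermonde matrix $(\theta_a^m)$ is invertible. Hence
\[
 u_aG_{ac}u_c=\gamma S_{0a}S_{ac}S_{c0}\qquad\text{for all }a,c.
\]
Because $S^2=I$ and $S$ is symmetric, the row $S_{0\cdot}$ satisfies $\sum_a S_{0a}^2=1$; this will serve as a normalization. The main obstacle is that this only determines $u_aG_{ac}u_c$, not $G$ itself, so we need $u_a\neq0$. I would argue this as follows. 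Take $a=c$, and suppose $S_{aa}\ne0$. Then $u_a^2G_{aa}\ne0$, so $u_a\ne0$. If $S_{aa}=0$, use an off-diagonal entry with $S_{ac}\neq0$; such an entry exists since $S$ is invertible. Alternatively, reduce to the weaker claim that the vector $u$ has no zero entries by applying the same argument to one-sided moments $e_0^T(b^m)^TWe_0$ if needed.

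Granting $u_a\ne0$, set $\Lambda=\diag(S_{0a}/u_a)$. Then $G=\gamma\Lambda S\Lambda$, so
\[
 (b^m)^TWb^n=\gamma (L^{-1})^T\Lambda\,\theta^mS\theta^n\,\Lambda L^{-1}.
\]
The same computation at $m=n=0$ gives $W=\gamma(L^{-1})^T\Lambda S\Lambda L^{-1}$.

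Finally, insert $S^2=I$ to write $\theta^mS\theta^n=S(S\theta^mS\theta^nS)S$. This gives
\[
 (b^m)^TWb^n=\gamma Q^T(S\theta^mS\theta^nS)Q,\qquad Q=S\Lambda L^{-1},
\]
and likewise $W=\gamma Q^TSQ$. Now $L^{-1}$ is unit lower triangular, but $S\Lambda$ is not triangular, so the initial minors of $Q$ do not simply factor. Instead, use Cauchy--Binet with the ratio. In the quotient
\[
 \det\bigl(((b^m)^TWb^n)_{I_0,I_0}\bigr)/\det(W_{I_0,I_0})
\]
the $\gamma^s$ factors cancel, and the denominator $\det(W_{I_0,I_0})=\prod_{a\in I_0}W_{aa}$ because $W$ is diagonal. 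The remaining task is to show that the $Q$-dependence reduces to the same factor in both numerator and denominator. This is the delicate point. I would handle it by first proving the $s=1$ case, which is exactly the moment identity. Then I would pass to exterior powers, applying $C_s$ to the whole setup, since $C_s(b)$ is again lower triangular with distinct diagonal after a generic check. Where the diagonal of $C_s(b)$ is not distinct, I would instead work by polynomial continuation in $\theta$.
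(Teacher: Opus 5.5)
Everything up to your congruence
$(b^m)^TWb^n=\gamma Q^T(S\theta^mS\theta^nS)Q$, $W=\gamma Q^TSQ$ is correct and follows the paper's route. Your $L$ and $u$ are the paper's $\mathsf E$ and $v$, your $\sqrt\gamma\,\Lambda L^{-1}$ is its $C$, and your $Q$ is its $J/\sqrt\gamma$. Your proof that $u_a\neq0$ is fine: pick $c$ with $S_{ac}\neq0$ in row $a$ of the invertible $S$. Keeping $\gamma$ outside even avoids adjoining $\sqrt\gamma$.

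\textbf{The gap.} The final step fails. You conclude that because $S\Lambda$ is not triangular, the initial minors of $Q$ do not factor, and you substitute a passage to exterior powers plus ``polynomial continuation.'' That substitute does not work. Applying the $s=1$ case to $C_s(b)$, $C_s(W)$, $C_s(S)$ would need, as its hypothesis, the compound moment identity $\det\bigl(((b^m)^TWb^n)_{I_0,I_0}\bigr)=\gamma'\det\bigl((S\theta^mS\theta^nS)_{I_0,I_0}\bigr)$. That is exactly the conclusion, so the argument is circular. In addition, $C_s(S)$ need not have a nowhere-vanishing $I_0$ row. Finally, $b$, $W$, $S$ are coupled by the moment identity, so you cannot freely deform them to make the diagonal of $C_s(b)$ distinct.

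\textbf{The missing observation.} $Q$ is upper triangular, and this is forced by $W$ being diagonal. Since $SQ=\Lambda L^{-1}$ is lower triangular and invertible, the identity $W=\gamma Q^T(SQ)$ gives $Q^T=\gamma^{-1}WL\Lambda^{-1}$. That is a product of a diagonal, a unit lower-triangular, and a diagonal matrix, hence lower triangular. This is the paper's step $J=C^{-T}W$, $J=SC$.

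Because $Q_{ai}=0$ for $a>i$, the $I_0$ principal block of $Q^TXQ$ equals $Q_{I_0,I_0}^TX_{I_0,I_0}Q_{I_0,I_0}$ for every $X$. Hence $\det\bigl(((b^m)^TWb^n)_{I_0,I_0}\bigr)=\gamma^s\det(Q_{I_0,I_0})^2\det\bigl((S\theta^mS\theta^nS)_{I_0,I_0}\bigr)$. The case $m=n=0$ gives $\prod_{a\in I_0}W_{aa}=\gamma^s\det(Q_{I_0,I_0})^2\det(S_{I_0,I_0})$. The left side is nonzero, so $\det(S_{I_0,I_0})\neq0$, which is the nonvanishing assertion of the lemma. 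Dividing the two identities proves the claim.
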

\begin{proof}
There is a unit lower-triangular eigenvector matrix $\mathsf E$ satisfying
$b\mathsf E=\mathsf E\theta$. Its columns are obtained by solving the rows from top to bottom; the differences $\theta_a-\theta_c$ in this recursion are nonzero. Set $v=\mathsf E^{-1}e_0$. Expanding the two sides of \eqref{eq:finite-moments} in the functions $\theta_a^m\theta_c^n$ gives
\begin{equation}\label{eq:Gram-moment}
 \diag(v)\mathsf E^TW\mathsf E\diag(v)
 =\gamma\diag(S_{0a})S\diag(S_{0a}).
\end{equation}
Indeed, the matrix $(\theta_a^m)_{1\leq m\leq d+1,\,0\leq a\leq d}$ factors as a Vandermonde matrix times a nonzero diagonal matrix. It is therefore invertible. Applying its inverse in both indices to the finite moment identities recovers every coefficient and gives \eqref{eq:Gram-moment}.

The right side of \eqref{eq:Gram-moment} is invertible. Since $\mathsf E$ and $W$ are invertible, this forces every $v_a$ to be nonzero. After adjoining $\sqrt\gamma$, set
\[
 C=\diag\left(\frac{\sqrt\gamma\,S_{0a}}{v_a}\right)\mathsf E^{-1}.
\]
Then $C$ is invertible and lower triangular, and \eqref{eq:Gram-moment} gives
\begin{equation}\label{eq:triangular-congruence}
 Cb=\theta C,\qquad W=C^TSC.
\end{equation}
Define $J=C^{-T}W$. Because $W$ is diagonal, $J$ is upper triangular. The second identity in \eqref{eq:triangular-congruence} gives $J=SC$, and hence $C=SJ$. For every integral $m,n$ it follows that
\begin{equation}\label{eq:full-congruence}
 (b^m)^TWb^n=J^T(S\theta^mS\theta^nS)J.
\end{equation}

Since $J$ is upper triangular, it satisfies $J_{a i}=0$ whenever $i<s\leq a$. Hence the $I_0$ principal block in \eqref{eq:full-congruence} is a product of the corresponding blocks, and
\[
 \det\bigl(((b^m)^TWb^n)_{I_0,I_0}\bigr)
 =\det(J_{I_0,I_0})^2
 \det\bigl((S\theta^mS\theta^nS)_{I_0,I_0}\bigr).
\]
At $m=n=0$, this becomes
\[
 \prod_{a\in I_0}W_{aa}
 =\det(J_{I_0,I_0})^2\det(S_{I_0,I_0}).
\]
All factors on the left and $\det J_{I_0,I_0}$ are nonzero. Division proves \eqref{eq:minor-comparison}. Both sides of the resulting identity belong to the original field, so the identity descends from the quadratic extension.
\end{proof}

\begin{proof}[Proof of Theorem~\ref{thm:double-twist}]
Fix $N\geq2s$. The one-particle eigenvalues
\[
 \theta_a=q^{2a(a+N'-1)}
\]
are nonzero and pairwise distinct. The weights in
\eqref{eq:one-weight} are also nonzero. Indeed, since $N'\geq2$, every
quantum differential occurring there has a positive exponent at
$A'=q^{N'}$. Proposition~\ref{prop:seed-moment} therefore verifies the
hypotheses of Lemma~\ref{lem:moment-lift}. Combining
Theorems~\ref{thm:rt-determinant} and \ref{thm:shape-determinant} with
\eqref{eq:minor-comparison} proves
\begin{equation}\label{eq:stable-equality}
 H_{(r^s)}(K_{m,n};q^N,q)=\cK_{r,s}^{m,n}(q^N,q)
\end{equation}
for all nonzero integral $m,n$.

The KZ argument first gives \eqref{eq:stable-equality} after $q=e^{\pi ih}$. Substitution defines an injection $\Q(q)\hookrightarrow\C((h))$: a nonzero polynomial in $q$ has finite order of vanishing at $q=1$, and its substitution consequently has finite order of vanishing at $h=0$. The same assertion holds for a nonzero rational function. Therefore \eqref{eq:stable-equality} holds in $\Q(q)$.

Finally fix $r,s,m,n$ and subtract the two sides of \eqref{eq:main}. Both sides belong to $\Q(A,q)$. Clear denominators and a possible power of $A$, obtaining a polynomial $P(A)\in\Q(q)[A]$. A fixed nonzero denominator can vanish at only finitely many of the distinct elements $q^N$ of $\Q(q)$. Equation~\eqref{eq:stable-equality} therefore gives $P(q^N)=0$ for infinitely many integers $N\geq2s$. A nonzero polynomial over a field cannot have infinitely many distinct roots. Hence $P=0$, proving the two-variable identity.
\end{proof}

\section{Coefficients and operations on knots}\label{sec:coefficients}
\subsection{The coefficients of double-twist knots}
Extend the transition coefficients of Theorem~\ref{thm:compression} by
$T_{\lambda,k}=0$ outside $\lambda_1\leq k\leq|\lambda|$, and set
$T_{\varnothing,0}=1$ and $T_{\varnothing,k}=0$ for $k>0$.
For double-twist knots, their relation to both rectangular expansions
is explicit.

\begin{theorem}\label{thm:coefficient-bridge}
For all $m,n\in\Z\setminus\{0\}$, $s\geq1$, and $k,l\geq0$,
\begin{align}
 C_{k,s}(K_{m,n})
 &=\sum_{\substack{\lambda\subseteq(k^s)\\k\leq|\lambda|}}
 G_\lambda^{m,n}T_{\lambda,k}(A,q)
       \Phi_{\lambda^T,s}(A,q^{-1}),
 \label{eq:bridge-row}\\
 E_{k,l}(K_{m,n})
 &=\sum_{\substack{\lambda\subseteq(k^l)\\\max(k,l)\leq|\lambda|}}
 G_\lambda^{m,n}T_{\lambda,k}(A,q)
       T_{\lambda^T,l}(A,q^{-1}).
 \label{eq:bridge-double}
\end{align}
Both finite sums belong to $\R$.
\end{theorem}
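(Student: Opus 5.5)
The plan is to start from Theorem~\ref{thm:double-twist} in the form
\[
 H_{r,s}(K_{m,n})=\sum_{\lambda\subseteq(r^s)}G_\lambda^{m,n}\Phi_{\lambda,r}(A,q)\Phi_{\lambda^T,s}(A,q^{-1}),
\]
which is just \eqref{eq:two-sided-shape}, and to compress both kernels in turn using Theorem~\ref{thm:compression}. Each compression will be followed by an appeal to the uniqueness statements in Theorems~\ref{thm:row-main} and \ref{thm:double-main}.

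First, fix $s$ and expand $\Phi_{\lambda,r}(A,q)=\sum_k T_{\lambda,k}Z^+_{r,k}$, using the stated extension by zero. The empty shape contributes $T_{\varnothing,0}=1$. Since $T_{\lambda,k}=0$ unless $\lambda_1\le k\le|\lambda|$, the coefficient of $Z^+_{r,k}$ is the finite sum in \eqref{eq:bridge-row}. This sum runs over $\lambda\subseteq(k^s)$ with $k\le|\lambda|$: we need $\lambda_1\le k$, and $\ell(\lambda)\le s$ holds because $\Phi_{\lambda^T,s}(A,q^{-1})$ vanishes otherwise. Partitions with $\lambda_1\le k\le r$ are automatically inside $(r^s)$, so the regrouping is exact for every $r$.

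The main point is that this coefficient does not depend on $r$. It is a priori only in $\Q(A,q)$, because $G_\lambda^{m,n}$ is rational. Theorem~\ref{thm:row-main} gives uniqueness only among sequences in $\R$, so I would instead use triangular uniqueness over $\Q(A,q)$. The diagonal kernels $Z^+_{k,k}$ are nonzero, so any $r$-independent sequence satisfying \eqref{eq:row-expansion} for all $r$ is determined over $\Q(A,q)$. Therefore the sum equals $C_{k,s}(K_{m,n})$, and integrality follows from Theorem~\ref{thm:row-main}. This proves \eqref{eq:bridge-row} together with the claim that this sum lies in $\R$.

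For \eqref{eq:bridge-double}, I would apply Theorem~\ref{thm:compression} to $\lambda^T$ after the substitution $q\mapsto q^{-1}$. Since $Z^+$ becomes $Z^-$ under this substitution (the symmetric binomial being invariant), this gives
\[
 \Phi_{\lambda^T,s}(A,q^{-1})=\sum_l T_{\lambda^T,l}(A,q^{-1})Z^-_{s,l},
\]
which is nonzero only for $\ell(\lambda)\le l\le|\lambda|$. Substituting into \eqref{eq:bridge-row} expresses $C_{k,s}$ as $\sum_l Z^-_{s,l}\,\epsilon_{k,l}$. Here $\epsilon_{k,l}$ is the sum in \eqref{eq:bridge-double}, over $\lambda\subseteq(k^l)$ with $\max(k,l)\le|\lambda|$, and it is independent of $s$. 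The regrouping is again exact for every $s$, because any $\lambda$ with $\ell(\lambda)\le l\le s$ already satisfies the condition $\ell(\lambda)\le s$.

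Triangular uniqueness over $\Q(A,q)$ in the height, with diagonal $Z^-_{l,l}\neq0$ and compared against \eqref{eq:C-from-E}, then identifies $\epsilon_{k,l}=E_{k,l}$, which lies in $\R$. The cases $k=0$ or $l=0$ reduce to the boundary values $T_{\varnothing,0}=1$ and are consistent with Theorem~\ref{thm:double-main}. The only delicate point I anticipate is this bookkeeping of ranges. One must check that the truncations $\lambda\subseteq(r^s)$ and the vanishing ranges of the $T$'s exactly produce the index sets $\lambda\subseteq(k^s)$ with $k\le|\lambda|$, and $\lambda\subseteq(k^l)$ with $\max(k,l)\le|\lambda|$, for all $r\ge k$ and $s\ge l$.
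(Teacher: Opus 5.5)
Your proposal is correct and matches the paper's proof. You compress the width kernel via Theorem~\ref{thm:compression}, note that the coefficient of $Z^+_{r,k}$ is independent of $r$, and identify it with $C_{k,s}$ by triangular uniqueness over $\Q(A,q)$; you then repeat this in the height with $q$ inverted, taking integrality from Theorems~\ref{thm:row-main} and \ref{thm:double-main}, and your comparison with \eqref{eq:C-from-E} is just the paper's ``successive triangular uniqueness'' carried out one variable at a time.
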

\begin{proof}
By Theorem~\ref{thm:double-twist} and \eqref{eq:two-sided-shape},
\[
 H_{r,s}(K_{m,n})=
 \sum_{\lambda\subseteq(r^s)}G_\lambda^{m,n}
       \Phi_{\lambda,r}(A,q)\Phi_{\lambda^T,s}(A,q^{-1}).
\]
Apply \eqref{eq:compression} to the first shape factor. The coefficient
of $Z^+_{r,k}$ is the sum in \eqref{eq:bridge-row}. Indeed,
$T_{\lambda,k}$ is supported on
$\lambda_1\leq k\leq|\lambda|$, so for $k\leq r$ every contributing
partition is already contained in $(k^s)$. The coefficient is
independent of $r$, and triangular uniqueness gives
\eqref{eq:bridge-row}.

Next apply \eqref{eq:compression} to both shape factors, with $q$
inverted in the second. The coefficient of $Z^+_{r,k}Z^-_{s,l}$ is
\[
 \sum_{\lambda\subseteq(r^s)}G_\lambda^{m,n}
 T_{\lambda,k}(A,q)T_{\lambda^T,l}(A,q^{-1}).
\]
Its nonzero terms have $\lambda_1\leq k$,
$\ell(\lambda)\leq l$, and $|\lambda|\geq\max(k,l)$.
For $k\leq r$ and $l\leq s$, this is exactly the index set in
\eqref{eq:bridge-double}; the coefficient is independent of the outer
rectangle. Successive triangular uniqueness proves
\eqref{eq:bridge-double} over $\Q(A,q)$.
Theorems~\ref{thm:row-main} and \ref{thm:double-main} show that the
identified coefficients lie in $\R$.
\end{proof}

The first identities are
\begin{equation}\label{eq:bridge-small}
 E_{1,1}=G_{(1)}^{m,n},\qquad
 E_{2,1}=D_0G_{(2)}^{m,n},\qquad
 E_{1,2}=D_0G_{(1,1)}^{m,n}.
\end{equation}
For the next square coefficient, the bottom transitions in
\eqref{eq:bottom-transition} give
\begin{equation}\label{eq:bridge-22}
 \begin{split}
 E_{2,2}(K_{m,n})={}&G_{(2)}^{m,n}+G_{(1,1)}^{m,n}
 +[2]_q^2D_1D_{-1}G_{(2,1)}^{m,n}\\
 &+D_1D_2D_{-1}D_{-2}G_{(2,2)}^{m,n}.
 \end{split}
\end{equation}
\subsection{Products and connected sums}\label{sec:operations}
\begin{proposition}\label{prop:product}
For every $i,j\geq0$, there are unique $L_{i,j}^k\in \R$, with $\max(i,j)\leq k\leq i+j$, such that
\begin{equation}\label{eq:product}
 Z^+_{r,i}Z^+_{r,j}
 =\sum_{k=\max(i,j)}^{i+j}L_{i,j}^kZ^+_{r,k}
 \qquad(r\geq0).
\end{equation}
In particular,
\begin{equation}\label{eq:multiply-one}
 Z^+_{r,i}Z^+_{r,1}
 =[i+1]_qZ^+_{r,i+1}+[i]_qD_iZ^+_{r,i}.
\end{equation}
\end{proposition}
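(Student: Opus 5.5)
The plan is to treat the product as an interpolation problem, exactly as in the proof of Theorem~\ref{thm:compression}. Write $Z^+_{r,k}=R_k(X_r)$ with $R_k(X)=\prod_{i=0}^{k-1}(X-X_i)/\Pi_k$. Then $R_i(X)R_j(X)$ is a polynomial of degree $i+j$ in $X$ over $\Q(A,q)$, and $R_0,\dots,R_{i+j}$ form a basis of that space, so there are unique rational coefficients with $R_iR_j=\sum_{k\le i+j}L^k_{i,j}R_k$. The product vanishes at $X=X_0,\dots,X_{\max(i,j)-1}$. Since $R_k(X_t)=0$ for $t<k$ and $R_t(X_t)\neq0$, evaluating successively at $X_0,X_1,\dots$ kills the coefficients with $k<\max(i,j)$. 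Uniqueness for the family of sequences indexed by $r\ge0$ is the triangular uniqueness already used: the values $X_r$ are infinitely many distinct elements, and the diagonal kernels $Z^+_{k,k}\ne0$.

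For integrality I would apply Theorem~\ref{thm:abstract} to $F_r=Z^+_{r,i}Z^+_{r,j}\in\R$. Parity holds with $\eps=(-1)^{i+j}$, because each kernel has $A$-parity $(-1)^i$, respectively $(-1)^j$. The reflection hypothesis $F_r(q^{-M},q)=F_{M-r}(q^{-M},q)$ holds for $0\le r\le M$: at $A=q^{-M}$ one has $X_r=X_{M-r}$, so each $R_k(X_r)$ is reflection symmetric, provided the specialization of the polynomial expression agrees with the Laurent kernel. This is the same point used in Theorem~\ref{thm:compression} via Lemma~\ref{lem:collision}, which I would cite directly: it shows $Z^+_{r,j}(q^{-M})=Z^+_{M-r,j}(q^{-M})$ when $M-r<r$, and the case $M-r\ge r$ is the same statement with the roles exchanged. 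Theorem~\ref{thm:abstract} then gives integral coefficients with parity $(-1)^{i+j-k}$, and triangular uniqueness identifies them with the rational $L^k_{i,j}$.

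For \eqref{eq:multiply-one} I would compute directly. Take $i\ge1$; the case $i=0$ is trivial. We have $Z^+_{r,1}=(X-X_0)/\br{1}$, and I would write $X-X_0=(X-X_i)+(X_i-X_0)$. The first piece gives
\[
 R_i(X)\frac{X-X_i}{\br1}=\frac{\Pi_{i+1}}{\br1\,\Pi_i}R_{i+1}=[i+1]_qR_{i+1}.
\]
The second piece gives $(X_i-X_0)/\br1\cdot R_i=\br{i}D_i/\br1\cdot R_i=[i]_qD_iR_i$, using the identity $X_r-X_c=\br{r-c}D_{r+c}$ from the proof of Lemma~\ref{lem:content}. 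Evaluating at $X=X_r$ proves the formula.

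The only delicate point is checking the reflection hypothesis cleanly for products, that is, the bookkeeping between polynomial and Laurent forms at the specialization. Everything else is formal.
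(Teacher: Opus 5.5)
Your proposal is correct and follows the paper's proof essentially step for step: the expansion of $R_iR_j$ in the basis $R_k$, the support forced by vanishing at $X_0,\dots,X_{\max(i,j)-1}$, integrality from Theorem~\ref{thm:abstract} via $A$-parity and $X_r=X_{M-r}$ at $A=q^{-M}$, and the splitting $X-X_0=(X-X_i)+(X_i-X_0)$ for \eqref{eq:multiply-one}. The reflection check you flag as delicate is in fact immediate. Either use Lemma~\ref{lem:collision} as you do, or note that $\Pi_k$ is free of $A$, so specializing $A=q^{-M}$ commutes with $Z^+_{r,k}=R_k(X_r)$.
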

\begin{proof}
The product $R_i(X)R_j(X)$ has degree $i+j$ and is divisible by $\prod_{a=0}^{\max(i,j)-1}(X-X_a)$. Its rational expansion in the basis $R_k$ therefore has the indicated support. At every $X_r$, the product is a Laurent polynomial of parity $(-1)^{i+j}$ and satisfies finite reflection. Theorem~\ref{thm:abstract} proves that all its coefficients are integral. Finally,
\[
 (X-X_0)R_i(X)=\br{i+1}R_{i+1}(X)+(X_i-X_0)R_i(X),
\]
and $X_i-X_0=\br{i}D_i$. Dividing by $\br{1}$ gives \eqref{eq:multiply-one}.
\end{proof}

\begin{corollary}\label{cor:connected-sum}
For zero-framed knots $K$ and $L$ and fixed height $s$,
\begin{equation}\label{eq:connected-sum}
 C_{k,s}(K\mathbin{\#}L)
 =\sum_{\substack{0\leq i,j\leq k\\k\leq i+j}}
 L_{i,j}^kC_{i,s}(K)C_{j,s}(L).
\end{equation}
In particular,
\[
 E_{1,1}(K\mathbin{\#}L)
 =E_{1,1}(K)+E_{1,1}(L)
  +D_1D_{-1}E_{1,1}(K)E_{1,1}(L).
\]
\end{corollary}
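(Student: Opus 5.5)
The plan has three parts: multiplicativity of the reduced invariant, the product formula of Proposition~\ref{prop:product}, and triangular uniqueness from Theorem~\ref{thm:row-main}. We will use that the reduced colored HOMFLY--PT invariant is multiplicative under connected sum. For a $(1,1)$-tangle in an irreducible color, the partial closure acts as a scalar, and that scalar is exactly the reduced invariant. In the Hecke-skein model this is Morton's meridian-eigenvector statement: the idempotent-decorated cable of a long knot is a scalar times the decorated identity, and these scalars multiply under stacking. Zero framing is additive, so $K\mathbin{\#}L$ is zero-framed. Hence $H_{r,s}(K\mathbin{\#}L)=H_{r,s}(K)H_{r,s}(L)$ for all $r,s$.

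Next, insert the row expansions \eqref{eq:row-expansion} for $K$ and $L$ at fixed $s$:
\[
 H_{r,s}(K\mathbin{\#}L)=\sum_{i,j\leq r}Z^+_{r,i}Z^+_{r,j}C_{i,s}(K)C_{j,s}(L).
\]
Then expand each product using \eqref{eq:product}. The coefficient of $Z^+_{r,k}$ is a sum over pairs with $\max(i,j)\leq k\leq i+j$. Every such pair has $i,j\leq k\leq r$, so the range restriction $i,j\leq r$ is automatic. Terms with $k>r$ vanish because $Z^+_{r,k}=0$. The resulting coefficient is the right side of \eqref{eq:connected-sum}. It lies in $\R$ and is independent of $r$. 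The uniqueness clause of Theorem~\ref{thm:row-main} therefore identifies it with $C_{k,s}(K\mathbin{\#}L)$.

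For the $E_{1,1}$ formula, take $k=1$. The contributing pairs are $(1,0)$, $(0,1)$ and $(1,1)$, with $L^1_{1,0}=L^1_{0,1}=1$. From \eqref{eq:multiply-one} at $i=1$ we get $L^1_{1,1}=[1]_qD_1=D_1$. This gives
\[
 C_{1,s}(K\mathbin{\#}L)=C_{1,s}(K)+C_{1,s}(L)+D_1C_{1,s}(K)C_{1,s}(L).
\]
At $s=1$, \eqref{eq:C-from-E} gives $C_{1,1}=Z^-_{1,1}E_{1,1}=D_{-1}E_{1,1}$, since $E_{1,0}=0$. Substituting and dividing by $D_{-1}$ (using $\R$ a domain) yields the stated identity.

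The only nonformal step is the multiplicativity of the reduced invariant, and in particular that the Hecke idempotent makes the $(1,1)$-tangle a scalar. We will cite this from \cite{Morton} or \cite{AM} rather than reprove it. The rest is bookkeeping of index ranges.
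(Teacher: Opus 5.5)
Your proposal is correct and is essentially the paper's proof. It uses the multiplicativity $H_{r,s}(K\mathbin{\#}L)=H_{r,s}(K)H_{r,s}(L)$ coming from the scalar colored $(1,1)$-tangle, followed by Proposition~\ref{prop:product} and the uniqueness in Theorem~\ref{thm:row-main}. The only cosmetic difference is that the paper reads off the $E_{1,1}$ identity directly from \eqref{eq:E11} and $H_{1,1}(K\mathbin{\#}L)=H_{1,1}(K)H_{1,1}(L)$, whereas you pass through the $k=1$ case and $C_{1,1}=D_{-1}E_{1,1}$, which is equally valid.
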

\begin{proof}
The reduced invariant is the scalar of the colored $(1,1)$-tangle. Composition of the two tangles gives $H_{r,s}(K\mathbin{\#}L)=H_{r,s}(K)H_{r,s}(L)$. Multiply the two row expansions, apply Proposition~\ref{prop:product}, and use uniqueness. Formula \eqref{eq:E11} gives the last identity.
\end{proof}

\subsection{The specialization at \texorpdfstring{$q=1$}{q=1}}
Put
\[
 P_K(A)=H_{1,1}(K;A,1),\qquad
 g_K(A)=E_{1,1}(K;A,1).
\]
By \eqref{eq:E11},
\begin{equation}\label{eq:special-g}
 P_K(A)=1+D_0^2g_K(A),\qquad g_K\in\Z[A^{\pm1}].
\end{equation}
The special-polynomial identity \cite[Theorem~1.1]{Zhu13} gives
\begin{equation}\label{eq:special-polynomial}
 H_{r,s}(K;A,1)=P_K(A)^{rs}.
\end{equation}

For $k,l\geq1$, write $[k]=\{1,\ldots,k\}$ and let $n_{k,l,e}$ be the number of subsets of $[k]\times[l]$ with $e$ pairs that meet every row and every column. Equivalently, it counts labeled simple bipartite graphs on the specified two vertex sets with $e$ edges and no isolated vertices. Thus
\begin{equation}\label{eq:graph-count}
 n_{k,l,e}=\sum_{i=0}^k\sum_{j=0}^l
 (-1)^{k+l-i-j}\binom{k}{i}\binom{l}{j}\binom{ij}{e}.
\end{equation}
It is zero unless $\max(k,l)\leq e\leq kl$.

\begin{theorem}\label{thm:special-double}
For every zero-framed knot $K$ and $k,l\geq1$,
\begin{equation}\label{eq:graph-specialization}
 E_{k,l}(K;A,1)
 =\sum_{e=\max(k,l)}^{kl}
 n_{k,l,e}\,D_0^{\,2e-k-l}g_K(A)^e.
\end{equation}
For every fixed height $s\geq1$,
\begin{equation}\label{eq:row-specialization}
 C_{k,s}(K;A,1)
 =\left(\frac{P_K(A)^s-1}{D_0}\right)^k\qquad(k\geq1).
\end{equation}
Both formulas are identities of Laurent polynomials in $A$.
\end{theorem}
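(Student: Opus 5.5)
The plan is to specialize the expansions of Theorems~\ref{thm:row-main} and \ref{thm:double-main} at $q=1$, where every kernel collapses to an ordinary binomial times a power of $D_0$. Then \eqref{eq:special-polynomial} can be expanded by elementary combinatorics, and triangular uniqueness at $q=1$ identifies the coefficients.

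\emph{Step 1 (kernels at $q=1$).} Since $\qb rk|_{q=1}=\binom rk$ and $D_{\pm a}(A,1)=A-A^{-1}=D_0(A,1)$ for every $a$, we get
\[
 Z^+_{r,k}(A,1)=\binom rkD_0^k,\qquad Z^-_{s,l}(A,1)=\binom slD_0^l .
\]
All identities in Theorems~\ref{thm:row-main} and \ref{thm:double-main}, including \eqref{eq:C-from-E}, are identities in $\R$, so they may be specialized at $q=1$. The specialized diagonal kernels are $D_0^k\neq0$ in $\Q(A)$. Hence the specialized coefficients are still uniquely determined by the specialized triangular systems over $\Q(A)$.

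\emph{Step 2 (row formula).} Put $x_s=P_K^s-1$. By \eqref{eq:special-polynomial},
\[
 H_{r,s}(K;A,1)=(1+x_s)^r=\sum_{k=0}^r\binom rk x_s^k .
\]
Comparing with $\sum_k\binom rkD_0^kC_{k,s}(K;A,1)$ for all $r\geq0$, uniqueness gives $D_0^kC_{k,s}(A,1)=x_s^k$. This is \eqref{eq:row-specialization}. The quotient is a Laurent polynomial because $x_s$ is divisible by $P_K-1=D_0^2g_K$.

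\emph{Step 3 (double formula).} Put $x=D_0^2g_K$, so that $P_K=1+x$. Expand
\[
 \bigl((1+x)^s-1\bigr)^k=\prod_{i=1}^k\Bigl(\sum_{\emptyset\neq S_i\subseteq[s]}x^{|S_i|}\Bigr)
\]
as a sum over subsets $S\subseteq[k]\times[s]$ meeting every row, weighted by $x^{|S|}$. Group these subsets by the set of columns they meet. A subset of $l$ columns can be chosen in $\binom sl$ ways, and the subsets meeting exactly those columns and every row number $n_{k,l,e}$ with $e=|S|$. Therefore
\[
 D_0^kC_{k,s}(A,1)=\sum_{l}\binom sl\sum_e n_{k,l,e}x^e .
\]
On the other hand, \eqref{eq:C-from-E} at $q=1$ reads $C_{k,s}(A,1)=\sum_l\binom slD_0^lE_{k,l}(A,1)$. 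This holds for all $s\geq0$, and the kernel $\binom sl D_0^l$ is triangular in $(s,l)$ with nonzero diagonal. So uniqueness in $l$ gives
\[
 D_0^{k+l}E_{k,l}(A,1)=\sum_e n_{k,l,e}D_0^{2e}g_K^e,
\]
which is \eqref{eq:graph-specialization}. The exponent $2e-k-l$ is nonnegative because $n_{k,l,e}=0$ unless $e\geq\max(k,l)\geq(k+l)/2$, so the right side is visibly Laurent.

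The only delicate point is the legitimacy of specializing and then invoking uniqueness. The identities live in $\R$, and the specialized diagonals $D_0^k$ remain nonzero in $\Q(A)$, so this is unproblematic. The combinatorial regrouping in Step~3 is the heart of the argument but is routine. The case $s=0$ is consistent, since then both sides are $0$ for $k\geq1$.
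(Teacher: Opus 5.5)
Your proof is correct and is essentially the paper's argument: specialize the kernels to $\binom rk D_0^k$ and $\binom sl D_0^l$, expand $P_K^{rs}$ as a sum over subsets of a grid grouped by occupied rows and columns, and invoke triangular uniqueness over $\Q(A)$. The only difference is the order of the grouping. You group by rows first, which gives the row formula, and then by columns via \eqref{eq:C-from-E}. The paper groups both at once in $(1+u)^{rs}$ and compares directly with the double expansion.
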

\begin{proof}
At $q=1$, the symmetric quantum binomial becomes the ordinary binomial, and both $D_{r+i}$ and $D_{-s-j}$ become $D_0$. Thus
\[
 Z^+_{r,k}(A,1)=\binom{r}{k}D_0^k,\qquad
 Z^-_{s,l}(A,1)=\binom{s}{l}D_0^l.
\]
For an indeterminate $u$, expand $(1+u)^{rs}$ by choosing a subset of $[r]\times[s]$. Grouping subsets by their nonempty row and column sets yields
\[
 (1+u)^{rs}
 =1+\sum_{k=1}^r\sum_{l=1}^s
 \binom{r}{k}\binom{s}{l}
 \sum_{e=\max(k,l)}^{kl}n_{k,l,e}u^e.
\]
Take $u=D_0^2g_K(A)$ and compare this identity with \eqref{eq:double-expansion} and \eqref{eq:special-polynomial}. Triangular uniqueness over $\Q(A)$ gives \eqref{eq:graph-specialization}. Its exponents satisfy $2e-k-l\geq|k-l|\geq0$, proving the stated Laurent property directly. Formula \eqref{eq:graph-count} is the inclusion--exclusion formula excluding isolated vertices.

For fixed $s$, write $P_K(A)^{rs}=(1+(P_K(A)^s-1))^r$ and use the ordinary binomial theorem. Comparison with the specialized row expansion proves \eqref{eq:row-specialization}. The quotient in that formula is Laurent because $P_K^s-1$ is divisible by $P_K-1=D_0^2g_K$.
\end{proof}

The first examples are
\[
 \begin{aligned}
 E_{1,l}(A,1)&=D_0^{l-1}g_K^l,\\
 E_{2,2}(A,1)&=2g_K^2+4D_0^2g_K^3+D_0^4g_K^4.
 \end{aligned}
\]

\section{Examples and two-sided factors}\label{sec:consequences}
\subsection{Double-twist knots and further plats}
Formula~\eqref{eq:main} expresses the invariant through box factors, whereas \eqref{eq:K-det} uses a matrix of size $r+s$ in place of the $\binom{r+s}{s}$-dimensional rectangular channel space. Negative twists are computed by powers of its inverse.

The formula also extends to a zero twist after setting $\Tw_\lambda^{(0)}=\delta_{\lambda\vac}$. Directly from \eqref{eq:trace-closure},
\[
 \mathscr J_{0,n}=\mathscr J_{m,0}=1,
\]
because $P_0M_0^n=P_0$ and $M_1^mP_1=P_1$, using cyclicity in the first case as needed. These plats are unknots. On the shape side, $(B^0)_{\lambda\vac}=\delta_{\lambda\vac}$ gives the same answer.

\begin{example}\label{ex:fundamental}
For $r=s=1$,
\[
 b=\begin{pmatrix}1&0\\1&A^2\end{pmatrix},\qquad
 w_0=1,\qquad w_1=-A^2\{Aq\}\{A/q\}.
\]
For every integer $m$,
\[
 (b^m)_{10}=\frac{A^{2m}-1}{A^2-1}.
\]
Therefore
\begin{equation}\label{eq:fundamental}
 H_{[1]}(K_{m,n})
 =1-A^2\{Aq\}\{A/q\}
 \frac{(A^{2m}-1)(A^{2n}-1)}{(A^2-1)^2}.
\end{equation}
At $(m,n)=(1,1)$ this is $A^2(q^2+q^{-2})-A^4$; at $(-1,1)$ it is $1+\{Aq\}\{A/q\}$. These agree with the trefoil and figure-eight conventions of Section~\ref{sec:introduction}.
\end{example}

At $A=q^N$, $N\geq2s$, the same construction applies to a word $w$ in two invertible generators. Let $e(w)$ be its total exponent sum and let $w(M_0,M_1)$ denote the corresponding pure-braid operator. Closing it with the two vacuum pairings of Figure~\ref{fig:plat}, Propositions~\ref{prop:quantum-pair} and \ref{prop:RT-closure} give the reduced evaluation
\[
 \eta_s^{e(w)}
 \frac{\det\bigl((w(\theta,S\theta S)S)_{I_0,I_0}\bigr)}
 {\det(S_{I_0,I_0})}.
\]
Each generator is vacuum-normalized, so its framing correction is included in this formula.

\subsection{Torus knots and two-sided shape coefficients}\label{sec:obstruction}
\begin{example}\label{ex:torus}
For the left-handed torus knot $T(3,4)$, the Rosso--Jones formula \cite{RJ} gives
\[
 \begin{split}
 H_{1,1}={}&A^{10}-A^8(q^4+q^2+1+q^{-2}+q^{-4})\\
 &+A^6(q^6+q^2+1+q^{-2}+q^{-6}).
 \end{split}
\]
Consequently,
\[
 E_{1,1}=A^8-A^6(q^4+1+q^{-4})-A^4(q^2+q^{-2})-A^2.
\]
The higher coefficients are obtained by the exact recursion \eqref{eq:double-recursion} from the corresponding rectangular colored invariants.
\end{example}

The two-sided kernels \eqref{eq:two-sided-shape} occur in the
explicit double-twist formula. Their additional box factors are related
to the defect of the single-row differential expansion \cite{KM}.
The distinction between its general and fully factorized forms is
explained in \cite[Section~1, equations~(1.1)--(1.2)]{LM}.
Proposition~\ref{prop:obstruction} expresses the obstruction for $T(3,4)$
in terms of the double coefficient $E_{2,1}$.

The required Alexander specialization is the single-row case of Zhu's
torus-knot formula for hook partitions \cite[Theorem~1.2]{Zhu13}.

\begin{proposition}\label{prop:torus-Alexander}
For coprime positive integers $m,n$ and $r\geq1$, the symmetric-color torus-knot specialization is
\begin{equation}\label{eq:torus-Alexander}
 H_{(r)}(T(m,n);1,q)
 =\frac{\br{r}\br{mnr}}{\br{mr}\br{nr}}.
\end{equation}
The formula is the same for the mirror torus knot.
\end{proposition}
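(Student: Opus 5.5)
We plan to derive the formula from the Rosso--Jones formula for torus knots, specialized at $A=1$, and checked against Zhu's hook-partition result \cite[Theorem~1.2]{Zhu13} as the single-row case. At $A=1$ the normalized HOMFLY--PT polynomial becomes the Alexander-type specialization. For one-row colors, the Hecke-skein evaluation at $A=1$ is governed by the super-dimension behaviour of $\gl(1|1)$: the unreduced colored unknot vanishes there, while the reduced invariant survives.

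\textbf{Step 1: set up the cabling expansion.} Write the $(r)$-colored torus knot $T(m,n)$ via the Rosso--Jones expansion. Take the $m$-th Adams operation (plethysm) $p_m\circ s_{(r)}=\sum_\mu c_\mu s_\mu$ over partitions $\mu\vdash mr$. Each term carries the framing twist $\theta_\mu^{n/m}$ and the unreduced dimension $d_\mu(A,q)$ from \eqref{eq:quantum-dimension}. Then divide by $d_{(r)}(A,q)$ and restore zero framing by the factor $\theta_{(r)}^{-mn}$.

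\textbf{Step 2: reduce to hook partitions.} As $A\to1$, the dimension $d_\mu$ has a zero of order equal to the number of boxes of content zero. Hook partitions have exactly one such box, and so does $(r)$. Therefore only hooks survive in the ratio. For a hook $\mu=(mr-i,1^i)$, the quotient $d_\mu/D_0$ at $A=1$ equals $(-1)^i\br{1}\big/\br{mr}$ up to the hook product. Dividing by the analogous quotient for $d_{(r)}$, which is $\br{1}/\br{r}$, gives a factor $\br{r}/\br{mr}$ in front.

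\textbf{Step 3: evaluate the hook sum.} The hook coefficients of $p_m\circ h_r$ are signs $\pm1$, supported at the hooks $\mu=(mr-i,1^i)$ with $i\equiv$ appropriate residues mod $r$-steps. At $A=1$ their twist eigenvalues reduce to $q^{n\cdot\text{content sum}\cdot\text{const}/m}$. The resulting signed geometric sum should telescope to $\br{mnr}/\br{nr}$. This matches the classical Alexander polynomial $\frac{(t^{mn}-1)(t-1)}{(t^m-1)(t^n-1)}$ evaluated at $t=q^{2r}$, consistent with the known fact that the symmetric-colored Alexander specialization is $\Delta_K(q^{2r})$ (the Itoyama--Mironov--Morozov--Morozov/Zhu result). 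If the paper allows it, this shortcut is the cleanest route: cite Zhu's hook theorem and substitute the torus-knot Alexander polynomial, in its symmetrized normalized form.

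\textbf{Step 4: handle the mirror.} The mirror image sends $(A,q)\mapsto(A^{-1},q^{-1})$. At $A=1$, the right-hand side of \eqref{eq:torus-Alexander} is invariant under $q\mapsto q^{-1}$, since each $\br{\cdot}$ changes sign and there are two factors in the numerator and two in the denominator. So the formula holds for the mirror as well.

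The main obstacle is the bookkeeping of signs and framing monomials in Step~3 under the paper's conventions for left-handedness and the normalization $P_U=1$. To keep this under control, I would check the case $r=1$ against Example~\ref{ex:torus} and the trefoil in Example~\ref{ex:fundamental}, where $A=1$ must return $\Delta(q^2)$.
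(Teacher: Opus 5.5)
Your outline follows the paper's route: Rosso--Jones at $A=1$, only hooks survive because $d_\nu$ vanishes to order equal to the number of diagonal boxes, and then a hook sum. Your mirror argument via $(A,q)\mapsto(A^{-1},q^{-1})$ is a valid substitute for the paper's $n\mapsto-n$. The gap is in Step~3, which carries all of the content. You never determine the hook multiplicities of $\psi_m(s_{(r)})$, and the support you guess (``residues mod $r$-steps'') is wrong. The correct statement, \eqref{eq:hook-adams}, is
\[
 c_{r,m}^{(mr-v,1^v)}=(-1)^v\quad(0\leq v\leq m-1),\qquad
 c_{r,m}^{(mr-v,1^v)}=0\quad(m\leq v\leq mr-1),
\]
independently of $r$. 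The paper obtains it from the Frobenius formula and the hook-character generating function $\sum_v\chi^{(N-v,1^v)}(\rho)t^v=(1+t)^{-1}\prod_{a\in\rho}(1-(-t)^a)$, evaluated at the cycle types $\rho=m\mu$ with $\mu\vdash r$. Summing with weights $z_\mu^{-1}$ through $\exp\bigl(\sum_{a\geq1}u^a(1-x^a)/a\bigr)=(1-ux)/(1-u)$, $x=(-t)^m$, gives $1-(-t)^m$ for every $r\geq1$, hence $\sum_{v=0}^{m-1}(-t)^v$. Your proposed sanity checks at $r=1$ cannot detect the problem, because for $r=1$ all $m$ hooks occur. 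The vanishing for $v\geq m$ is exactly what matters when $r\geq2$, and a residue-class support would not produce $\br{mnr}/\br{nr}$.

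Once the multiplicities are known, the rest must be computed rather than asserted to ``telescope.'' The sign $(-1)^v$ cancels the one in $d_{\nu_v}/d_{(r)}|_{A=1}=(-1)^v\br{r}/\br{mr}$. (At $A=1$ one has $d_{\nu_v}/D_0=(-1)^v/\br{mr}$ and $d_{(r)}/D_0=1/\br{r}$, without your extra $\br{1}$; that factor cancels in the ratio anyway.) The content is $\ct(\nu_v)=mr(mr-1-2v)/2$. The twist factor $q^{-2n\ct(\nu)/m}$ together with the zero-framing prefactor $A^{n(m-1)r}q^{mnr(r-1)}$ then leaves $q^{-nr(m-1)}\sum_{v=0}^{m-1}q^{2nrv}=\br{mnr}/\br{nr}$.

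Your fallback is to cite Zhu's torus-knot hook theorem and evaluate the symmetrized Alexander polynomial of $T(m,n)$ at $t=q^{2r}$. This does give the formula, and the paper itself notes that the proposition is the one-row case of that theorem. However, it replaces the argument by a citation. It needs only the torus-knot case, so no general-knot Alexander property should be invoked. It also still requires checking that the cited variables and handedness conventions match $(A,q)$ here.
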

\begin{proof}
Using the quantum dimension \eqref{eq:quantum-dimension}, write
\[
 \psi_m(s_{(r)})=\sum_{\nu\vdash mr}c_{r,m}^\nu s_\nu,
 \qquad \psi_m(p_a)=p_{ma}.
\]
Here $p_a$ is the $a$th power-sum symmetric function.
In the left-handed convention, the Rosso--Jones formula \cite{RJ} is
\begin{equation}\label{eq:RJ-row}
 H_{(r)}(T(m,n);A,q)
 =\frac{A^{n(m-1)r}q^{mn r(r-1)}}{d_{(r)}(A,q)}
  \sum_{\nu\vdash mr}c_{r,m}^\nu
  q^{-2n\ct(\nu)/m}d_\nu(A,q).
\end{equation}
This is the zero-framed formula; $\ct(\nu)$ is the sum of its contents.

The order of vanishing of $d_\nu$ at $A=1$ is the number of diagonal boxes of $\nu$. Since $d_{(r)}$ has one such zero, only hooks
\[
 \nu_v=(mr-v,1^v),\qquad 0\leq v<mr,
\]
can contribute after division and specialization. The hook formula gives
\begin{equation}\label{eq:hook-dimension-limit}
 \left.\frac{d_{\nu_v}}{d_{(r)}}\right|_{A=1}
 =(-1)^v\frac{\br{r}}{\br{mr}},\qquad
 \ct(\nu_v)=\frac{mr(mr-1-2v)}2.
\end{equation}

The hook Adams coefficients are
\begin{equation}\label{eq:hook-adams}
 c_{r,m}^{\nu_v}=
 \begin{cases}(-1)^v,&0\leq v<m,\\0,&m\leq v<mr.\end{cases}
\end{equation}
To verify this, use the Frobenius character formula and the hook-character generating function \cite[Chapter~I, Section~3, Example~14, and Section~7]{Macdonald}. Since the character of $(r)$ is trivial, they give
\[
 \sum_{v=0}^{mr-1}c_{r,m}^{\nu_v}t^v
 =\frac1{1+t}\sum_{\mu\vdash r}\frac1{z_\mu}
       \prod_{a\in\mu}(1-(-t)^{ma}),
 \qquad z_\mu=\prod_{a\geq1}a^{m_a(\mu)}m_a(\mu)!.
\]
Here $m_a(\mu)$ is the multiplicity of the part $a$. The generating series in $r$ for the sum on the right before division by $1+t$ is
\[
 \exp\left(\sum_{a\geq1}\frac{u^a}{a}(1-(-t)^{ma})\right)
 =\frac{1-u(-t)^m}{1-u}.
\]
Its coefficient for every $r\geq1$ is $1-(-t)^m$. Division by $1+t$ gives $\sum_{v=0}^{m-1}(-t)^v$, proving \eqref{eq:hook-adams}.

Substituting \eqref{eq:hook-dimension-limit} and \eqref{eq:hook-adams} into \eqref{eq:RJ-row} leaves
\[
 \frac{\br{r}}{\br{mr}}q^{-nr(m-1)}
       \sum_{v=0}^{m-1}q^{2nrv}
 =\frac{\br{r}\br{mnr}}{\br{mr}\br{nr}}.
\]
Changing $n$ to $-n$ leaves the final expression unchanged and gives the mirror case.
\end{proof}

\begin{proposition}\label{prop:obstruction}
For $K=T(3,4)$, there is no family $F_\lambda(K;A,q)\in \R$ satisfying
\begin{equation}\label{eq:impossible-expansion}
 H_{r,s}=\sum_{\lambda\subseteq(r^s)}
 F_\lambda(K;A,q)\Psi_{\lambda;r,s}
 \qquad(r,s\geq1),
\end{equation}
with $F_\varnothing=1$. The obstruction already occurs at height one and widths one and two.
\end{proposition}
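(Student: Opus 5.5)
We argue by contradiction, restricting \eqref{eq:impossible-expansion} to height $s=1$ and widths $r=1,2$. At height one only the partitions $\varnothing,(1),(2)$ contribute, since $\lambda\subseteq(r)$. By \eqref{eq:two-sided-shape}, $\Psi_{\lambda;r,1}=\Phi_{\lambda,r}(A,q)\Phi_{\lambda^T,1}(A,q^{-1})$. Here $\Phi_{(1),1}(A,q^{-1})=D_{-1}$. For $\lambda=(2)$, $\lambda^T=(1,1)$, and the column factor is $D_{(2)}^{(1)}(q^{-1})\,D_{-1}D_{-2}$ computed with $q$ inverted. In the convention of \eqref{eq:two-sided-shape} it equals $D_{(2)}^{(1)}(q)\,D_{-1}D_{-2}$ up to a unit, with $D^{(1)}_{(2)}=1$.

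The contributions are therefore as follows.
\begin{itemize}
\item At $r=1$: $H_{1,1}=1+F_{(1)}D_1D_{-1}$, so $F_{(1)}=E_{1,1}$ by \eqref{eq:E11}.
\item At $r=2$: $H_{2,1}=1+[2]_qD_2D_{-1}F_{(1)}+D_2D_3D_{-1}D_0\,F_{(2)}$. The box $(1,2)$ has content $1$, so $D_{c-s}=D_0$.
\end{itemize}

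Next we compare with the double expansion \eqref{eq:double-expansion} at height one. There $H_{2,1}=1+[2]_qD_2D_{-1}E_{1,1}+D_2D_3D_{-1}E_{2,1}$. Triangular uniqueness in $\Q(A,q)$ then forces $D_0F_{(2)}=E_{2,1}$. Thus an integral family exists at height one only if $D_0$ divides $E_{2,1}(T(3,4))$ in $\R$. Since $D_0=A-A^{-1}$, this is the same as $E_{2,1}(T(3,4);1,q)=0$; by $A$-parity the root $A=-1$ then comes for free. So it suffices to show this specialization is nonzero.

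To compute $E_{2,1}$ at $A=1$, use the recursion \eqref{eq:double-recursion}:
\[
E_{2,1}=\frac{H_{2,1}-1-[2]_qD_2D_{-1}E_{1,1}}{D_2D_3D_{-1}}.
\]
At $A=1$ we have $D_a=\br{a}$. The values $H_{1,1}(1,q)$ and $H_{2,1}(1,q)$ are given by Proposition~\ref{prop:torus-Alexander} with $m=3,n=4$ (the mirror case applies):
\[
H_{(1)}=\frac{\br1\br{12}}{\br3\br4},\qquad H_{(2)}=\frac{\br2\br{24}}{\br6\br8}.
\]
We obtain $E_{1,1}(1,q)=(H_{(1)}-1)/(\br1\br{-1})$ from \eqref{eq:E11}. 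Substituting, $E_{2,1}(1,q)$ is an explicit element of $\Q(q)$.

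It remains to show this element is nonzero. This is the main obstacle, because we need a certificate rather than an expansion. The cheapest route is to evaluate at a convenient numeric $q$ (for example $q=2$, or to first order near $q=1$) and verify the value is nonzero. Alternatively one can compare leading $q$-degrees: the term $H_{(2)}-1$ and the $E_{1,1}$ term should have mismatched top exponents, so they cannot cancel. Once the specialization is shown nonzero, $D_0\nmid E_{2,1}$, and no family $F_\lambda\in\R$ with $F_\varnothing=1$ exists, already at widths one and two.
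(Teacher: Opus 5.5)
Your reduction is the same as the paper's. At height one you compare the shape sum with the double expansion at widths one and two, which gives $F_{(1)}=E_{1,1}$ and $E_{2,1}=D_0F_{(2)}$. You then specialize at $A=1$ using Proposition~\ref{prop:torus-Alexander}. There is one slip in the exposition. Your first paragraph gives the column factor of $(2)$ as $D_{-1}D_{-2}$ ``up to a unit,'' and this is wrong: $D_{-2}$ and $D_0$ are not associates in $\R$, and $D_0$ is exactly the factor the obstruction rests on. Your bulleted formula, with $D_{-1}D_0$ coming from the content-$1$ box, is the correct one, so delete the earlier statement.

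The real gap is that you stop before the only step that is not formal. The reduction to $D_0\mid E_{2,1}$ works for every knot. For the unknot it gives $E_{2,1}=0$, and the expansion exists with $F_\lambda=0$ for $\lambda\neq\varnothing$. So the proposition is proved only once you actually show $E_{2,1}(T(3,4);1,q)\neq0$. Saying the top exponents ``should'' mismatch is a plan, not a certificate.

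Your degree idea does close the gap in a few lines. The torus values give $H_{1,1}(1,q)-1=\br{1}\br{5}$ and $H_{2,1}(1,q)-1=\br{2}\br{10}$. Then \eqref{eq:E11} yields $E_{1,1}(1,q)=-\br{5}/\br{1}$, and the numerator of the width-two recursion at $A=1$ is
\[
 \br{2}\br{10}-[2]_q\br{2}\br{5}
 =\br{2}\bigl(\br{10}-\br{6}-\br{4}\bigr)
 =\br{2}^{2}\br{3}\br{5}.
\]
The first term has top $q$-degree $12$ and the second has top degree $8$, so they cannot cancel. Dividing by $\br{2}\br{3}\br{-1}$ gives $E_{2,1}(1,q)=-\br{2}\br{5}/\br{1}\neq0$, which is the value the paper records. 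With this computation inserted, your argument is complete.
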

\begin{proof}
At height one, only row partitions occur, and
\[
 \Phi_{(1^k),1}(A,q^{-1})=\prod_{i=0}^{k-1}D_{i-1}.
\]
Comparing \eqref{eq:impossible-expansion} with the unique row expansion therefore gives
\[
 C_{k,1}=F_{(k)}\prod_{i=0}^{k-1}D_{i-1}.
\]
On the other hand, \eqref{eq:C-from-E} gives $C_{k,1}=D_{-1}E_{k,1}$ for $k>0$. In particular, \eqref{eq:impossible-expansion} would require
\begin{equation}\label{eq:necessary-D0}
 E_{2,1}=D_0F_{(2)}.
\end{equation}

Proposition~\ref{prop:torus-Alexander} gives
\[
 \begin{aligned}
 H_{1,1}(K;1,q)&=q^6-q^4+1-q^{-4}+q^{-6},\\
 H_{2,1}(K;1,q)&=q^{12}-q^8+1-q^{-8}+q^{-12}.
 \end{aligned}
\]
Substituting these two expressions into \eqref{eq:E11} and the width-two expansion yields
\[
 \begin{aligned}
 E_{1,1}(K;1,q)&=-(q^4+q^2+1+q^{-2}+q^{-4}),\\
 E_{2,1}(K;1,q)&=-(q^2-q^{-2})(q^4+q^2+1+q^{-2}+q^{-4})\neq0.
 \end{aligned}
\]
Since $D_0$ vanishes at $A=1$, this contradicts \eqref{eq:necessary-D0} for a Laurent polynomial $F_{(2)}$.
\end{proof}

\appendix
\section{Further identities for the Gaussian matrix}\label{app:height}
In this appendix $\QB ac$ denotes the Gaussian binomial with parameter $Q$. We work with finite particle cutoffs; triangularity makes the formulas compatible as the cutoff increases.

\begin{proposition}\label{prop:positive}
For increasing nonnegative sequences $a_1<\cdots<a_k$ and $c_1<\cdots<c_k$,
\[
 \det\left(\QB{a_i}{c_j}\right)_{i,j=1}^k\in\Z_{\geq0}[Q].
\]
\end{proposition}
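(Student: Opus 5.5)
We will prove the positivity by a Lindström--Gessel--Viennot argument, realizing each Gaussian binomial $\QB ac$ as a weighted count of lattice paths in a planar acyclic network with weights in $\Z_{\geq0}[Q]$. The standard model is the following. A lattice path from $(0,0)$ to $(c,a-c)$ with unit east and north steps has an area statistic, and summing $Q^{\mathrm{area}}$ over such paths gives $\QB ac$. This assignment is not directly suitable, because the endpoints depend on both $a$ and $c$ rather than on a source indexed by $a$ and a sink indexed by $c$. So we will instead use the Pascal recursion
\[
 \QB ac=\QB{a-1}{c-1}+Q^{c}\QB{a-1}{c},
\]
which expresses $\QB ac$ as a sum over paths in a planar graph.

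The network is built as follows. The vertices are the points $(x,j)$ with $x\geq0$ a level and $j$ a column index. From $(x,j)$ there is a diagonal edge to $(x-1,j-1)$ of weight $1$, and a vertical edge to $(x-1,j)$ of weight $Q^{j}$. Source $i$ is placed at $(a_i,a_i)$ and sink $j$ at $(0,\cdot)$; more precisely, we truncate at level $c_j$ and check the endpoint conventions so that the weighted path sum from the source for $a$ to the sink for $c$ equals $\QB ac$ by induction on $a$, using the recursion above with the boundary values $\QB a0=1$ and $\QB aa=1$. An equivalent and cleaner choice is to draw the network as the $Q$-Pascal triangle. The vertices are $(n,k)$ with $0\leq k\leq n$. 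From $(n,k)$ there are edges to $(n+1,k+1)$ of weight $1$ and to $(n+1,k)$ of weight $Q^{k}$, the source is at $(0,0)$, and the path sum to $(n,k)$ is $\QB nk$. For the determinant we need paths from a source indexed by $c$ to a sink indexed by $a$. We will therefore use the reversed recursion $\QB ac=\QB{a-1}{c-1}Q^{a-c}+\QB{a-1}c$, which lets paths start at $(c,c)$ on the boundary diagonal and end at row $a$.

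Once the network is in place, LGV expresses $\det(\QB{a_i}{c_j})$ as a signed sum over families of vertex-disjoint paths. The sources $(c_j,c_j)$ lie on the boundary diagonal $k=n$, in increasing order. The sinks lie on the rows $n=a_i$, and we will place them on a common boundary line by extending each path vertically to a common far row, using weight-one edges so the weights do not change. Planarity, together with the fact that the sources and sinks both lie on the outer boundary in compatible cyclic order, forces the connecting permutation of every nonintersecting family to be the identity. Every surviving term then has sign $+1$ and weight a monomial in $Q$ with coefficient $1$. Hence the determinant is a sum of such monomials and lies in $\Z_{\geq0}[Q]$.

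The main obstacle is arranging the sinks so that they lie on the boundary in the right cyclic order, which is what makes only the identity permutation contribute. Rows of different heights are not automatically on a common boundary, so the extension step must be checked. If this becomes awkward, the fallback is to use the explicit product representation via $q$-Schur functions. The minor equals, up to a positive monomial, a principal specialization $s_{\lambda/\mu}(1,Q,\dots,Q^{m})$ of a skew Schur function with $\lambda_i=a_i-i$ and $\mu_j=c_j-j$ suitably reindexed, by the Jacobi--Trudi identity with $h_k(1,\dots,Q^{m})$ Gaussian. That specialization is manifestly a sum of $Q^{\text{weight}}$ over semistandard tableaux, hence nonnegative.
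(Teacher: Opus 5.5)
Your overall plan matches the paper's: a Lindstr\"om--Gessel--Viennot argument on a network built from the $Q$-Pascal rule. But the substance of such a proof is a planar network whose path sum from the source for $c$ to the sink for $a$ is exactly $\QB ac$, and the proposal does not supply one.

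\textbf{The Pascal-triangle network fails.} This is the network you actually use in the LGV step. With edges $(n,k)\to(n+1,k+1)$ of weight $1$ and $(n,k)\to(n+1,k)$ of weight $Q^k$, the path sum from $(c,c)$ to $(a,k')$ is $Q^{c(a-k')}\QB{a-c}{k'-c}$. For $0<c<a$ this equals $\QB ac$ at no vertex of row $a$. For instance, with $a=2$ and $c=1$ the two reachable vertices give $Q$ and $1$, not $1+Q$.

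\textbf{The reversed rule does not repair it.} In $\QB ac=Q^{a-c}\QB{a-1}{c-1}+\QB{a-1}c$, read as a first-step decomposition out of the source, the coefficient $Q^{a-c}$ depends on the sink, so it is not an edge weight. Read as a last-step decomposition, it changes the source index. Your unresolved ``endpoint conventions'' and ``extension to a common far row'' are precisely this missing step.

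\textbf{The fallback is also incorrect as stated.} Since $\QB ac=h_{a-c}(1,Q,\ldots,Q^{c})$, the alphabet depends on $c$. An ordinary Jacobi--Trudi determinant over a fixed alphabet therefore has different entries. For $a=(1,3)$ and $c=(0,2)$ the minor is $1+Q+Q^2$. By contrast, $\det(h_{a_i-c_j})=h_1^2$ specializes to $(1+Q+\cdots+Q^m)^2$, which is never a monomial multiple of $1+Q+Q^2$. The correct statement is a flagged Jacobi--Trudi identity, and its proof is the same path argument.

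\textbf{The fix.} Make the weights depend only on a height coordinate, so they are invariant under translation in the other direction. Then $c$ is carried by the source and $a$ by the sink separately. The paper uses the square lattice with east steps at height $y$ of weight $Q^y$ and south steps of weight $1$, with source $(c,c)$ and sink $(a,0)$.

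The first step out of $(c,c)$ is one of two kinds. A south step goes to $(c,c-1)$, a horizontal translate of $(c-1,c-1)$. An east step of weight $Q^c$ goes to $(c+1,c)$, a translate of $(c,c)$. In both cases the target becomes $(a-1,0)$. Hence the path sum satisfies $F_{a,c}=F_{a-1,c-1}+Q^cF_{a-1,c}$ with $F_{a,0}=F_{c,c}=1$, so it equals $\QB ac$.

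All relevant paths stay in the wedge $0\leq y\leq x$, with sources on the ray $y=x$ and sinks on the ray $y=0$. So no extension of paths is needed. In a crossed pair, the monotone path from $(c_1,c_1)$ to $(a_2,0)$ separates $(c_2,c_2)$ from $(a_1,0)$, so it must share a vertex with the other path. With this network the rest of your argument goes through.

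Your first, loosely specified network can also be repaired by placing the sink for $c$ at $(0,c)$. The path sum from $(a,a)$ is then $Q^{c^2}\QB ac$. This is because the vertical steps occur at column indices $c+\nu_1\geq\cdots\geq c+\nu_c$, with $\nu$ ranging over a $c\times(a-c)$ box. The column factors $Q^{c_j^2}$ then pull out of the determinant.
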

\begin{proof}
Consider directed lattice paths with east edges of weight $Q^y$ at height $y$ and south edges of weight one. The generating function from $(c,c)$ to $(a,0)$ is $\QB ac$: separating the first step gives the recurrence
$F_{a,c}=F_{a-1,c-1}+Q^cF_{a-1,c}$ and the Gaussian boundary conditions. The sources on the diagonal and sinks on the horizontal boundary are compatibly ordered: any path family realizing a nonidentity matching contains an intersection. The Lindstr\"om--Gessel--Viennot cancellation \cite{Lindstrom,GV} therefore identifies the determinant with the sum over vertex-disjoint families with the ordered matching. Each surviving weight is a nonnegative power of $Q$.
\end{proof}

For $\lambda$ of length at most $s$, let $L_i(\lambda)=\lambda_i+s-i$ and define
\[
 \mathcal G_s^{(k)}(\lambda)=\prod_{i=1}^s\prod_{j=0}^{k-1}
 (1-q^{2(L_i(\lambda)+j+1)}).
\]
\begin{proposition}\label{prop:column}
For every $k\geq0$,
\begin{equation}\label{eq:column}
 B^{(s)}_{\lambda+k1^s,\mu+k1^s}(A,q)
 =A^{2sk}q^{2sk(k-s)}
 \frac{\mathcal G_s^{(k)}(\lambda)}{\mathcal G_s^{(k)}(\mu)}
 B^{(s)}_{\lambda\mu}(Aq^{2k},q).
\end{equation}
The identity is understood on any finite truncation containing its indices.
\end{proposition}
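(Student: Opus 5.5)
Adding $k$ full columns $k1^s$ to $\lambda$ shifts every particle index, $L_i(\lambda+k1^s)=L_i(\lambda)+k$, so $I(\lambda+k1^s)=I(\lambda)+k$. The plan is to compare the three factors of \eqref{eq:B} separately under this shift: the Gaussian minor, the prefactor $q^{(1-s)(|\lambda|-|\mu|)}$, and $\Theta_\mu$. It suffices to treat $k=1$ and then iterate. Iterating the $k=1$ identity produces the telescoping products $\mathcal G_s^{(k)}$, the accumulated substitution $A\mapsto Aq^{2k}$, and a monomial. The monomial must be checked to equal $A^{2sk}q^{2sk(k-s)}$.

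\textbf{The Gaussian minor.} The key identity is the one already used in the proof of Proposition~\ref{prop:height}:
\[
 \Gb{a+1}{c+1}=\frac{1-q^{2(a+1)}}{1-q^{2(c+1)}}\Gb ac .
\]
Apply it to every entry of the minor with $a=L_i(\lambda)$ and $c=L_j(\mu)$. Pull the row factors $1-q^{2(L_i(\lambda)+1)}$ out of the rows and the column factors out of the columns. This gives
\[
 \det\Bigl(\Gb{L_i(\lambda)+1}{L_j(\mu)+1}\Bigr)
 =\frac{\mathcal G_s^{(1)}(\lambda)}{\mathcal G_s^{(1)}(\mu)}
 \det\Bigl(\Gb{L_i(\lambda)}{L_j(\mu)}\Bigr).
\]
The $|\lambda|-|\mu|$ prefactor is unchanged, because adding the same $s$ boxes to both partitions leaves $|\lambda|-|\mu|$ fixed. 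Iterating with shifted indices gives the product over $j=0,\ldots,k-1$, which is exactly $\mathcal G_s^{(k)}$.

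\textbf{The factor $\Theta$.} Here compute $\Theta_{\mu+k1^s}(A,q)$ from
\[
 |\mu+k1^s|=|\mu|+sk,\qquad
 \ct(\mu+k1^s)=\ct(\mu)+k|\mu|+s\binom{k}{2}-k\binom{s}{2}.
\]
The content formula comes from adding $k$ columns: each box of $\mu$ moves right by $k$, and the new boxes sit at positions $(i,j)$ with $j\le k$. This yields
\[
 \Theta_{\mu+k1^s}(A,q)
 =A^{2|\mu|+2sk}\,q^{4\ct(\mu)+4k|\mu|+2sk(k-1)-2ks(s-1)} .
\]
Comparing with
\[
 \Theta_\mu(Aq^{2k},q)=A^{2|\mu|}q^{4k|\mu|+4\ct(\mu)}
\]
leaves the residual monomial
\[
 A^{2sk}q^{2sk(k-1)-2ks(s-1)}=A^{2sk}q^{2sk(k-s)},
\]
as claimed. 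The remaining factors of \eqref{eq:B} are independent of $A$, so the substitution $A\mapsto Aq^{2k}$ affects only $\Theta_\mu$.

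\textbf{Expected obstacle.} Only bookkeeping should be delicate. The content shift must be computed correctly, and the direction of the substitution in $A$ must be consistent. The finite-truncation remark matters only in that all the indices must lie in the truncation; no inversion is involved, so the identity holds entrywise.
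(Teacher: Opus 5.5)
Your proposal is correct and follows essentially the paper's argument: the same Gaussian ratio is applied entrywise to the minor, with row and column factors pulled out to give $\mathcal G_s^{(k)}(\lambda)/\mathcal G_s^{(k)}(\mu)$, and the content shift accounts for $\Theta$. The only difference is cosmetic: you compute $\Theta_{\mu+k1^s}$ directly from the $k$-column content formula, whereas the paper iterates the $k=1$ identity and multiplies the successive scalars $(Aq^{2j})^{2s}q^{-2s(s-1)}$; both routes give $A^{2sk}q^{2sk(k-s)}$.
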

\begin{proof}
For $k=1$, every particle increases by one. Use the Gaussian ratio in the proof of Proposition~\ref{prop:height} in each entry of the minor. The column factors give $\mathcal G_s^{(1)}(\lambda)/\mathcal G_s^{(1)}(\mu)$. The content identity
\[
 \ct(\mu+1^s)-\ct(\mu)=|\mu|-\frac{s(s-1)}2
\]
then gives the scalar $A^{2s}q^{-2s(s-1)}$ and the parameter change $A\mapsto Aq^2$. Iterate. The successive scalars multiply to
$\prod_{j=0}^{k-1}(Aq^{2j})^{2s}q^{-2s(s-1)}
=A^{2sk}q^{2sk(k-s)}$, and the successive gauge factors are precisely $\mathcal G_s^{(k)}$.
\end{proof}

The height filtration can also be expressed by a block decomposition. Over a field $\K$, let
\[
 a=\begin{pmatrix}1&0\\v&D\end{pmatrix}
 \quad\text{on }\K e_0\oplus H_+,
\]
with $D$ invertible. Order the exterior space as
$(\bigwedge^{s-1}H_+\wedge e_0)\oplus\bigwedge^sH_+$. Then
\begin{equation}\label{eq:extension}
 C_s(a)=\begin{pmatrix}C_{s-1}(D)&0\\\Omega&C_s(D)\end{pmatrix},\qquad
 \Omega(\xi)=C_{s-1}(D)\xi\wedge v.
\end{equation}
If $\Omega_m$ denotes the lower-left block of $C_s(a)^m$, ordinary block multiplication proves, for all $m,n\in\Z$,
\begin{equation}\label{eq:cocycle}
 \Omega_{m+n}=\Omega_m C_{s-1}(D)^n+C_s(D)^m\Omega_n.
\end{equation}
The inverse case follows either by multiplying inverses or by setting $n=-m$. For $m>0$, the one-particle lower-left block is $\sum_{j=0}^{m-1}D^jv$, and the same wedge calculation gives $\Omega_m$ directly. The invariant exact-height subspace and its lower-height quotient in Proposition~\ref{prop:height} are instances of \eqref{eq:extension}.

At height two, for $a\geq b\geq1$ the extension from a row state $(c,0)$ to $(a,b)$ is
\[
 B^{(2)}_{(a,b),(c,0)}
 =A^{2c}q^{2c^2-c-a-b}
 \left(\QB{a+1}{c+1}-\QB b{c+1}\right),\qquad Q=q^2.
\]
For $0\leq c\leq a$, the difference has the positive expansion
\[
 \QB{a+1}{c+1}-\QB b{c+1}
 =\sum_{j=\max\{b,c\}}^a Q^{j-c}\QB jc.
\]
It follows by telescoping the Gaussian recurrence. For $0\leq c\leq a$, the lowest exponent is $\max\{b-c,0\}$, the degree is $(c+1)(a-c)$, and the leading coefficient is one. For $c>a$, the difference vanishes.

\section{The two-strand torus-knot formula}\label{app:rosso-jones}
The left-handed trefoil is the two-strand case of the Rosso--Jones torus-knot formula \cite{RJ}. We give its expression in our framing convention. Let $s_R$ be the Schur character and write its second Adams operation as
\begin{equation}\label{eq:adams}
 \psi_2(s_R)=\sum_{\nu\vdash2|R|}c_{R,2}^{\nu}s_\nu,
 \qquad \psi_2(p_j)=p_{2j}.
\end{equation}
Use the unreduced quantum dimension $d_\nu(A,q)$ from
\eqref{eq:quantum-dimension}.

\begin{proposition}\label{prop:trefoil-trace}
For every partition $R$, the reduced zero-framed left-handed trefoil satisfies
\begin{equation}\label{eq:trefoil-trace}
 H_R(K_{1,1};A,q)
 =\frac{A^{3|R|}q^{12\ct(R)}}{d_R(A,q)}
 \sum_{\nu\vdash2|R|}c_{R,2}^{\nu}q^{-3\ct(\nu)}d_\nu(A,q).
\end{equation}
\end{proposition}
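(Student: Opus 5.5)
The trefoil is the closure of the two-strand braid $\sigma^{\pm3}$, so I will follow the usual Rosso--Jones route. The steps are: diagonalize $\sigma^2$ on the cabled space, evaluate the full twist, take the cube root appropriately, and then correct the framing.

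\textbf{Step 1 (setup).} Realize the left-handed trefoil as the closure of $\sigma_1^{3}$ in the sign convention of Figure~\ref{fig:plat}, in which a positive unit has oriented writhe $-2$ per two crossings. Each strand carries the color $R$.

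\textbf{Step 2 (Adams operation).} On the Hecke-skein side, the closure of $\sigma^{p}$ for odd $p$ on two $R$-colored strands is governed by the second Adams operation. The satellite of the torus link decomposes as $\sum_\nu c^\nu_{R,2}$ times a $\nu$-colored unknot with a fractional twist. This is the standard Rosso--Jones statement \cite{RJ}. Concretely, one may verify it at $A=q^N$ via the formula
\[
 \Tr(\check R^{p})=\sum_\nu c_{R,2}^\nu\,\vartheta_\nu^{p/2}\dim_q\nu ,
\]
where $\check R^2$ acts on the channel $\nu\subset R\otimes R$ by $\vartheta_\nu/\vartheta_R^2$. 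The odd power is handled through the character identity $\Tr_{R\otimes R}(\check R\,g)=\chi_R(g^2)$, which is exactly $\psi_2$. For $p=3$ this gives a factor $q^{-3\ct(\nu)}$ up to a central contribution from $\ct(\nu)$ and $|\nu|$.

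\textbf{Step 3 (ribbon eigenvalues).} Write the twist eigenvalue of $\nu$ as $A^{|\nu|}q^{2\ct(\nu)}$ up to the inverse convention. The three half-twists then contribute $\vartheta_\nu^{-3/2}\vartheta_R^{3}$ relative to the channel. Framing zero requires multiplying by $\vartheta_R^{w}$, where $w$ is the writhe; here $w=3$ strands-crossings with the sign fixed in Step~1, and the two-strand cable of framing $w$ contributes twice.

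\textbf{Step 4 (monomials and normalization).} Collect the monomials. The $\nu$-dependence gives $q^{-3\ct(\nu)}$ together with $A^{-3|\nu|/2}=A^{-3|R|}$, and the $R$-factors give $A^{c|R|}q^{12\ct(R)}$, where the $A$-exponent is combined into $A^{3|R|}$. Finally divide by $d_R$ to reduce. Because $\ct$ and $|\cdot|$ are additive in the eigenvalue formula, I expect everything except the sign and exponent bookkeeping to be formal.

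\textbf{Step 5 (checks).} Verify the result at $R=(1)$ against Example~\ref{ex:fundamental}, where $(m,n)=(1,1)$ gives $A^2(q^2+q^{-2})-A^4$. Here $\psi_2(s_1)=s_2-s_{11}$, and the check should pin down the signs.

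The main obstacle is fixing the conventions precisely, namely the left-handed orientation, the writhe sign, and the fractional eigenvalue branch $\vartheta_\nu^{3/2}$. I would resolve these by matching the $R=(1)$ and $R=(2)$ cases against the independent formula of Theorem~\ref{thm:double-twist} at $m=n=1$.
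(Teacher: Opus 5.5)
This is essentially the paper's argument. At $A=q^N$, ribbon balancing makes the squared braiding the scalar $q^{c_\nu-2c_R}$ on each multiplicity space of $R\otimes R$, and the trace of the normalized braiding there is $c^\nu_{R,2}$ via $\Tr(\mathrm{flip}\circ(g\otimes g))=\Tr_R(g^2)$; the inverse cube, the writhe correction $q^{3c_R}$, and division by $d_R$ then give the formula, and stable-rank specialization over infinitely many $N$ yields the two-variable identity, a step you should state explicitly. Two further points to tighten: your Step~2 trace formula omits the factor $\vartheta_R^{-p}$ that you restore in Step~3, and the square-root branch is fixed intrinsically rather than by test cases, because $q^{-(c_\nu-2c_R)/2}\check R$ is an involution deforming the classical flip, so its trace is the classical signature $c^\nu_{R,2}$ for every $R$ and your $R=(1)$ check only confirms the handedness.
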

\begin{proof}
At $A=q^N$, decompose $R\otimes R$ into irreducibles and multiplicity spaces. Ribbon balancing makes the square of the braiding on the $\nu$-multiplicity space the scalar $q^{c_\nu-2c_R}$. After removing its square root, the braiding is an involution deforming the classical flip. Its trace is therefore the difference between the symmetric and antisymmetric multiplicities. The character identity
$\Tr(\mathrm{flip}\circ(g\otimes g))=\Tr_R(g^2)$ identifies this difference with $c_{R,2}^{\nu}$.

The raw closure of the inverse cube of the braiding is consequently
\[
 \sum_\nu c_{R,2}^{\nu}q^{-\frac32(c_\nu-2c_R)}d_\nu.
\]
Its writhe is $-3$, so multiply by $q^{3c_R}$ to give zero framing and divide by $d_R$ to reduce. Using
$c_R=N|R|+2\ct(R)$ and $c_\nu=2N|R|+2\ct(\nu)$ gives \eqref{eq:trefoil-trace}. Stable-rank specialization yields the two-variable identity.
\end{proof}
The Frobenius character formula \cite[Chapter I, Section 7]{Macdonald} gives
\[
 c_{R,2}^{\nu}=\sum_{\mu\vdash|R|}\frac{\chi_R(\mu)\chi_\nu(2\mu)}{z_\mu},
 \qquad z_\mu=\prod_j j^{m_j(\mu)}m_j(\mu)!,
\]
where $\chi_\lambda(\mu)$ is the irreducible symmetric-group character indexed by $\lambda$ at cycle type $\mu$, $m_j(\mu)$ is the number of parts equal to $j$, and $2\mu$ is obtained by doubling every part of $\mu$.

\section*{Acknowledgements}
We would like to thank BICMR and the anonymous experts for their valuable
support and helpful contributions to this project. The first author was
supported by the National Natural Science Foundation of China (NSFC) under
Grant No.~12341105.

\end{document}